\documentclass[a4paper,12pt]{amsart}
\makeatletter
\def\@tocline#1#2#3#4#5#6#7{\relax
	\ifnum #1>\c@tocdepth 
	\else
	\par \addpenalty\@secpenalty\addvspace{#2}%
	\begingroup \hyphenpenalty\@M
	\@ifempty{#4}{%
		\@tempdima\csname r@tocindent\number#1\endcsname\relax
	}{%
		\@tempdima#4\relax
	}%
	\parindent\z@ \leftskip#3\relax \advance\leftskip\@tempdima\relax
	\rightskip\@pnumwidth plus4em \parfillskip-\@pnumwidth
	#5\leavevmode\hskip-\@tempdima
	\ifcase #1
	\or\or \hskip 1em \or \hskip 2em \else \hskip 3em \fi%
	#6\nobreak\relax
	\dotfill\hbox to\@pnumwidth{\@tocpagenum{#7}}\par
	\nobreak
	\endgroup
	\fi}
\makeatother
\usepackage[margin=2cm]{geometry}
\usepackage{amssymb,amsmath,latexsym,amsthm,mathrsfs} 
\usepackage{aliascnt}
\usepackage{resmes}
\usepackage{graphicx}
\usepackage{enumerate}
\usepackage{comment}
\usepackage{aurical}
\usepackage[hidelinks]{hyperref} %

\usepackage{xcolor}
\definecolor{dark_orange}{HTML}{CC6633}

\usepackage[autostyle,italian=guillemets]{csquotes}

\numberwithin{equation}{section}

\newtheorem{theorem}{Theorem}[section]
\newtheorem{lemma}[theorem]{Lemma}
\newtheorem{cor}[theorem]{Corollary}
\newtheorem{prop}[theorem]{Proposition}
\theoremstyle{definition}
\newtheorem{definition}[theorem]{Definition}
\newtheorem{rem}[theorem]{Remark}
\newtheorem{example}[theorem]{Example}

\newcommand{\R}{\mathbb{R}}
\newcommand{\N}{\mathbb{N}}
  
\renewcommand{\S}{\mathbb{S}}  

\newcommand{\norm}[1]{\|#1\|}
\newcommand{\haus}[1]{\mathcal{H}^{#1}}
\newcommand{\cont}{\mathcal{C}}
\renewcommand{\phi}{\varphi}

\newcommand{\subgr}{\mbox{subgr}}

\newcommand{\Per}{\operatorname{Per}} 

\newcommand{\loc}{\rm{loc}}
\renewcommand{\leq}{\leqslant}

\renewcommand{\geq}{\geqslant}

\renewcommand{\epsilon}{\varepsilon}
\allowdisplaybreaks

\title[Regularity theory for non-local almost minimal surfaces]{Regularity theory for\\ non-local almost minimal surfaces\\
via Euler-Lagrange inequalities at small scales}
\date{}

\author{\href{https://research-repository.uwa.edu.au/en/persons/serena-dipierro}{Serena Dipierro}}
\address{
	Serena Dipierro,
	Department of Mathematics and Statistics, 
	The University of Western Australia, 
	35 Stirling Highway, Perth, WA 6009, Australia}
\email{serena.dipierro@uwa.edu.au}

\author{Ovidiu Savin}
\address{
	Ovidiu Savin,
	Department of Mathematics, 
	Columbia University,
	2990 Broadway,	New York, NY 10027, United States of America}
\email{savin@math.columbia.edu}

\author{\href{https://research-repository.uwa.edu.au/en/persons/enrico-valdinoci}{Enrico Valdinoci}}
\address{
	Enrico Valdinoci,
	Department of Mathematics and Statistics, 
	The University of Western Australia, 
	35 Stirling Highway, Perth, WA 6009, Australia}
\email{enrico.valdinoci@uwa.edu.au}

\author{\href{https://research-repository.uwa.edu.au/en/persons/riccardo-villa-2/}{Riccardo Villa}}
\address{
	Riccardo Villa,
	Department of Mathematics and Statistics, 
	The University of Western Australia, 
	35 Stirling Highway, Perth, WA 6009, Australia}
\email{riccardo.villa@research.uwa.edu.au}

\begin{document}
	
	\maketitle
	\begin{abstract}		
		We extend the notion of non-local almost minimal sets introduced in [Dipierro, Valdinoci, and Villa, \emph{J. Math. Pures Appl.}~(2026)] by admitting lower-order powers for the remainder term.
		
		We combine classical techniques and a new, weaker version of the Euler-Lagrange inequalities (at a microscopic scale) to obtain $\cont^{1,\alpha}$-regularity for non-local almost minimal sets, up to Hausdorff dimension $n-3$.
		
		In turn, this formulation is well-suited for the non-local counterpart of the so-called Massari's problem for sets with prescribed mean curvature. 
		In particular, such a generalization allows us to consider sets with $L^p$ non-local mean curvature (with $p\geq n/s$), which do not satisfy any curvature equation (neither in the classical nor in the viscosity sense).
		
The results presented here lie beyond the reach of previously established methodologies. Indeed, the approach developed herein is based on a conceptual shift relative to the existing literature, which relied on a strong form of the Euler-Lagrange equation that is not available for general non-local almost minimal sets.
	\end{abstract}
	
	\setcounter{tocdepth}{2}
	\tableofcontents

	\section{Introduction and main results}\label{sec::intro}
	
	In a recent work, the first and the last two authors have addressed a non-local version of the so-called Massari's problem about sets with prescribed mean curvature. In turn, they have shown regularity of (a tailored version of) almost minimizers for the fractional perimeter (see \cite{MR5015001}).
	
	In the present paper, we develop a more comprehensive regularity theory for almost minimal sets to include a wider class of almost minimizers for the $s$-perimeter, i.e. minimizers for the fractional perimeter up to a remainder of order less than $n-s$.
	We investigate non-local almost minimal sets as mathematical objects that are interesting in their own right, developing a framework that covers interesting applications.
	
	Indeed, non-local almost minimizers theory finds broad utility in a variety of physical and geometric problems such as obstacle and free boundary problems, capillarity problems, and regularity of sets with prescribed non-local mean curvature (see for instance~\cite{MR3532394, MR3707346, MR3717439}). 
	
	Another interesting setting in which almost minimizers naturally appear is given by metastable configurations. When a phase-coexistence model or an interface dynamics system is thrown out of its ground state (e.g., by an oscillatory external force or a mass constraint), the resulting configurations do not satisfy a classical stationary equation. Instead, they may become stuck in a near-stationary regime for a long time. Therefore, it is natural to interpret these intermediate states as almost minimal for the functional under consideration, and investigate their stability and regularity.
	\medskip
	
	We stress that both the primary novelty and the main mathematical challenge of the present work lie in allowing for a parameter $\gamma\in(0,s)$. Such a relaxation requires a conceptual shift from the existing literature and has significant implications for the techniques employed to establish our regularity result. 
	Indeed, almost minimizers are often adopted to study proper minimizers, allowing one to interpret the minimizers of a ``complicated''  energy functional as almost minimizers of a ``simpler'' one, for which a well-developed regularity theory exists. On the other hand, almost minimizers do not generally satisfy an Euler-Lagrange equation, thereby losing global control on the fractional mean curvature $H_s$. 
	
	However, through a careful analysis and leveraging the lower-order scaling of the error term, we deduce here a bound on $H_s$ at a microscopic scale and we establish a weaker version of the Euler-Lagrange inequalities (compare Theorem~\ref{th::EL} here with \cite[Theorem 2.6]{MR5015001}).	
	\bigskip
	
	We start our discussion by recalling the definition of non-local interaction between disjoint sets introduced in~\cite{MR2675483}. Whenever needed, all subsets of~$\R^n$ will be implicitly assumed to be Lebesgue-measurable.
	
	For every couple of disjoint sets $A, B\subseteq\R^n$, let the non-local interaction between $A$ and $B$ be
	\begin{equation*}
		\mathcal{L}_s(A,B) := \iint_{A\times B} \frac{dx\,dy}{|x-y|^{n+s}}.
	\end{equation*}
	
	Given an open bounded Lipschitz domain $\Omega\subseteq\R^n$ and a set $E\subseteq\R^n$, we recall that
	the (localized) $s$-perimeter of $E$ in $\Omega$ is defined in \cite{MR2675483} as
	\begin{equation*}
		\begin{split}
			\Per_s(E,\Omega) &= \mathcal{L}_s(E\cap\Omega,E^c\cap\Omega) + \mathcal{L}_s(E\cap\Omega,E^c\cap\Omega^c) + \mathcal{L}_s(E\cap\Omega^c,E^c\cap\Omega) \\
			& = \iint_{\Omega_\sharp} \frac{\chi_E(x)\chi_{E^c}(y)}{|x-y|^{n+s}}dx\,dy .
		\end{split}
	\end{equation*}
	where $\Omega_\sharp:=\R^n\times\R^n\setminus(\Omega^c\times\Omega^c)$.
	
	\begin{definition} \label{def::almost_min}
		Let $\Omega\subseteq\R^n$ be an open bounded Lipschitz domain. Let $\Lambda\geq 0$ and $\gamma\in(0,s]$.
		
		We say that a set $E\subseteq\R^n$ is $(\Lambda,\gamma)$-almost minimal in $\Omega$ if 
		\begin{equation}\label{eq::almost_min}
			\Per_s(E,\Omega) \leq \Per_s(F,\Omega)+\Lambda r^{n-s+\gamma},
		\end{equation}
		for any set $F\subseteq\R^n$ such that $F\Delta E\subseteq \widetilde{B}_r$, for some ball $\widetilde{B}_r \Subset \Omega$.
		%
		When $\Omega$ is unbounded, we say that $E\subseteq\R^n$ is $(\Lambda,\gamma)$-almost minimal in $\Omega$ if it is $(\Lambda,\gamma)$-almost minimal in $\Omega'$, for any $\Omega'\Subset\Omega$.
	\end{definition}
	
	\begin{rem} \label{rem::massari}
		Let $s\in(0,1)$, $p>\frac{n}{s}$, and $H\in L^p(B_1)$. We recall that, for any set $E\subseteq\R^n$, the the non-local Massari's functional is defined in \cite{MR5015001} as
		\begin{equation*}
			\mathcal{J}_{s,H}(E,B_1) := \Per_s(E,B_1) + \int_{E\cap B_1} H(x)dx.
		\end{equation*}
		
		By a direct computation, we have that a minimizer of $\mathcal{J}_{s,H}$ is a $(\Lambda,\gamma)$-almost minimal in the sense of Definition~\ref{def::almost_min} with $\Lambda := \norm{H}_{L^p(B_1)}$ and $\gamma :=s-\frac{n}{p}$. 
	\end{rem}
	
	In the wake of \cite{MR5015001}, we also give the definitions of almost sub- and almost supersolution property.
	\begin{definition} \label{def::sub_super_sol}
		Let $\Omega\subseteq\R^n$ be an open bounded Lipschitz domain. Consider $\Lambda>0$ and $\gamma\in(0,s]$. 
		
		We say that~$E$ satisfies the~$(\Lambda,\gamma)$-almost subsolution property if
		\begin{equation} \label{eq::sub_sol_prop}
			\Per_s(E\cup A,\Omega)-\Per_s(E,\Omega)\mathcal={L}_s(A,E\setminus A)-\mathcal{L}_s(A,E^c)\geq -\Lambda r^{n-s+\gamma},
		\end{equation}
		for every~$A\subseteq E\cap \widetilde{B}_r$, for some ball $\widetilde{B}_r \Subset \Omega$.
		
		Similarly, we say that a set $E\subseteq\R^n$ satisfies the $(\Lambda,\gamma)$-almost supersolution property in $\Omega$ if
		\begin{equation}  \label{eq::super_sol_prop}
			\Per_s(E,\Omega)-\Per_s(E\cup A,\Omega)=\mathcal{L}_s(A,E)-\mathcal{L}_s(A,E^c\setminus A)\leq \Lambda r^{n-s+\gamma},
		\end{equation}
		for every~$A\subseteq E^c\cap \widetilde{B}_r$, for some ball $\widetilde{B}_r \Subset \Omega$.
	\end{definition}
	
	\begin{rem}
		Recalling \cite[Lemma 1.3]{MR5015001}, we have that an almost minimal set satisfies both the almost sub- and the almost supersolution property.
		
		Also, if a set satisfies both the $(\Lambda,\gamma)$-almost subsolution property and the $(\Lambda,\gamma)$-almost supersolution property, then it is $(2\Lambda,\gamma)$-almost minimal. 
	\end{rem}
	\medskip
	
	We point out that, in the classical case, other definitions of almost minimality are available.
	Compare for instance, Definition~\ref{def::almost_min} with the definition of almost minimality coming from capillary problems \cite[Definition 1.5]{MR3717439} (see also \cite[Definition 1.1]{MR5015001}) or Almgren's definition in~\cite{MR420406} (see also~\cite[Definition 2.10]{MR2518104}), in which the almost minimality is encode in a multiplicative term of lower order. This notion is also known as \emph{quasi minimality}. 
	A key advantage of our approach is that we consider a weaker definition that encompasses the others as special cases. This provides a unified framework for the topic and isolates its essential features.
	
	\begin{definition}[Quasi-minimality] \label{def::almgren_def}
		Let $K\geq0$ and some $\beta\in(0,s]$. A set $E\subset\R^n$ is said to be $(K, \beta)$-quasi minimal in $\Omega$ if, for every ball $\widetilde{B}_r\Subset\Omega$ of radius $r$ and for every set $F$ such that $F\setminus \widetilde{B}_r= E\setminus \widetilde{B}_r $, we have
		\begin{equation} \label{eq::almgren_def}
			\Per_s(E,\widetilde{B}_r) \leq (1+Kr^\beta)\Per_s(F,\widetilde{B}_r).
		\end{equation}
		When a set $E$ is $(K,0)$-quasi minimal, we say that $E$ is $Q$-quasi minimal, for some $Q>1$.
	\end{definition}
	\smallskip
	
	We stress every quasi minimal set in the sense of Definition~\ref{def::almgren_def} is also almost minimal in the sense of Definition~\ref{def::almost_min}, according to the following:
	\begin{lemma} \label{lemma::quasi_are_almost}
		Let $E$ be a $(K,\beta)$-quasi minimal set in $B_1$. Then, $E$ is $(\Lambda,\gamma)$-almost minimal in $B_1$, with $\gamma=\beta$ and for some positive constant $\Lambda=\Lambda(n,s,K)$.
	\end{lemma}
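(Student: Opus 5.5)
Fix a ball $\widetilde B_r\Subset B_1$ and a competitor $F$ with $F\Delta E\subseteq\widetilde B_r$. Since $F$ and $E$ coincide outside $\widetilde B_r$, all interactions between pairs of points both lying outside $\widetilde B_r$ are the same for $E$ and $F$. Hence
\[
\Per_s(E,B_1)-\Per_s(F,B_1)=\Per_s(E,\widetilde B_r)-\Per_s(F,\widetilde B_r).
\]
Indeed, $B_{1\sharp}\setminus(\widetilde B_r)_\sharp$ only contains pairs lying outside $\widetilde B_r$, where the integrands agree, and both localized perimeters are finite. This identity is the first step: it reduces the statement to comparing the perimeters in the small ball.

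By quasi-minimality \eqref{eq::almgren_def}, and since $E\setminus\widetilde B_r=F\setminus\widetilde B_r$,
\[
\Per_s(E,\widetilde B_r)-\Per_s(F,\widetilde B_r)\leq Kr^\beta\,\Per_s(F,\widetilde B_r).
\]
So it suffices to bound the right-hand side by $Cr^{n-s+\beta}$. The factor $\Per_s(F,\widetilde B_r)$ is not controlled for an arbitrary $F$, so I split into two cases.

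\emph{Case 1: $\Per_s(F,\widetilde B_r)\leq \Per_s(E\cup\widetilde B_r,\widetilde B_r)$.} Then
\[
\Per_s(F,\widetilde B_r)\leq \mathcal L_s(\widetilde B_r,\widetilde B_r^c)=C(n,s)r^{n-s}.
\]
This gives the claim with $\Lambda=KC(n,s)$.

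\emph{Case 2: $\Per_s(F,\widetilde B_r)>\Per_s(E\cup\widetilde B_r,\widetilde B_r)$.} Apply quasi-minimality with the competitor $E\cup\widetilde B_r$:
\[
\Per_s(E,\widetilde B_r)\leq(1+Kr^\beta)\,C(n,s)r^{n-s}\leq(1+K)C(n,s)r^{n-s},
\]
using $r<1$. Since $\Per_s(F,\widetilde B_r)\geq0$, we get directly
\[
\Per_s(E,\widetilde B_r)-\Per_s(F,\widetilde B_r)\leq (1+K)C r^{n-s}.
\]
This bound is too weak, because it misses the factor $r^\beta$. The fix is to combine it with the quasi-minimality inequality. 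If
\[
\Per_s(F,\widetilde B_r)\geq 2(1+K)Cr^{n-s},
\]
then for $r$ small (say $Kr^\beta\leq 1/2$) the inequality \eqref{eq::almgren_def} is automatically satisfied, but a better route is needed: in that case the difference $\Per_s(E,\widetilde B_r)-\Per_s(F,\widetilde B_r)$ is negative, so \eqref{eq::almost_min} holds trivially. Otherwise
\[
\Per_s(F,\widetilde B_r)<2(1+K)Cr^{n-s},
\]
and then
\[
Kr^\beta\,\Per_s(F,\widetilde B_r)\leq 2K(1+K)Cr^{n-s+\beta}.
\]

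Thus the argument does not really need to split on the comparison with $E\cup\widetilde B_r$. The cleaner dichotomy is whether $\Per_s(F,\widetilde B_r)$ exceeds $\Per_s(E,\widetilde B_r)$, which itself is at most $(1+K)Cr^{n-s}$. If it does, the almost-minimality inequality is trivial. If not, then $\Per_s(F,\widetilde B_r)\leq(1+K)Cr^{n-s}$ and the quasi-minimality bound yields the factor $r^{n-s+\beta}$. Altogether $\Lambda=K(1+K)C(n,s)$ and $\gamma=\beta$. When $r\geq 1$ is impossible inside $B_1$, no small-$r$ restriction is needed.

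The main obstacle I expect is the first step: justifying carefully that the localized perimeters in $B_1$ and in $\widetilde B_r$ differ by the same finite quantity for $E$ and $F$. This requires $\Per_s(E,B_1)<\infty$, which I would obtain from the bound on $\Per_s(E,\widetilde B_r)$ together with a covering argument, or otherwise handle by interpreting the inequality trivially when it is infinite.
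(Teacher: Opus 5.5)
Your final dichotomy argument is correct and is essentially the paper's proof: first bound $\Per_s(E,\widetilde B_r)\leq C(n,s,K)\,r^{n-s}$ with a trivial competitor (the paper uses $E\setminus\widetilde B_r$, you use $E\cup\widetilde B_r$), then use quasi-minimality to control the difference by $Kr^\beta$ times this bound. The only difference is that the paper avoids your case split, and the exploratory Case 1/Case 2 discussion you can drop, by dividing by $1+Kr^\beta$; this gives directly $\Per_s(E,\widetilde B_r)-\Per_s(F,\widetilde B_r)\leq \frac{Kr^\beta}{1+Kr^\beta}\Per_s(E,\widetilde B_r)$, so only $\Per_s(E,\widetilde B_r)$, and never $\Per_s(F,\widetilde B_r)$, has to be bounded.
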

	We present a proof of Lemma~\ref{lemma::quasi_are_almost} in Section~\ref{sec::quasi_are_almost}.

	\subsection{Regularity for almost minimal surfaces}
	We show that almost minimal sets have $\cont^{1,\alpha}$-regular boundary, for some small $\alpha\in(0,1)$.
	The precise statement goes as follows:
	
	\begin{theorem} \label{th::holder_reg_almost_min}
		Let~$s\in(0,1)$. Let~$E$ be a~$(\Lambda,\gamma)$-almost minimizer for the $s$-Perimeter in $B_1$.
		
		Then, there exist~$\epsilon_0>0$ and~$\eta_0=\eta_0(\epsilon_0)>0$, depending only on~$n$,~$s$,~$\Lambda$, and~$\gamma$, such that if
		$$\partial E\cap B_1 \subseteq \{|x_n|\leq \eta_0\},$$
		then~$\partial E\cap B_{1/2}$ is a~$\cont^{1,\alpha}$-surface, for any~$\alpha\in(0,\epsilon_0)$.
	\end{theorem}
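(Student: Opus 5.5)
We follow the Caffarelli--Roquejoffre--Savin scheme: an improvement of flatness obtained through a Harnack inequality, with the Euler--Lagrange equation replaced by the weaker Euler--Lagrange inequalities at small scales (Theorem~\ref{th::EL}). The point is that, in all the comparisons we make, the error term $\Lambda r^{n-s+\gamma}$ is of lower order with respect to the natural scaling $r^{n-s}$ of the $s$-perimeter.

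\emph{Rescaling and preliminary estimates.} First we observe that if $E$ is $(\Lambda,\gamma)$-almost minimal in $B_1$, then $E_r:=E/r$ is $(\Lambda r^{\gamma},\gamma)$-almost minimal in $B_{1/r}$. Hence the constant improves under blow-up. Next we establish:
\begin{itemize}
\item uniform density estimates for $E$ and $E^c$ at boundary points, obtained by comparing with $E\setminus B_\rho$ and using the fractional isoperimetric inequality (the error $\Lambda\rho^{n-s+\gamma}$ is absorbed for $\rho$ small);
\item compactness of almost minimizers in $L^1_{\loc}$, with limits that are $s$-minimal when $\Lambda r^\gamma\to0$, together with Hausdorff convergence of boundaries.
\end{itemize}

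\emph{Harnack inequality.} Assume $\partial E\cap B_1\subseteq\{|x_n|\le a\}$ with $a\le\eta_0$ and $\Lambda\le\epsilon a$ (after rescaling). We show that $\partial E\cap B_{1/2}$ lies in a strip of width $(1-\delta)a$ in one direction. To do so, we argue at a point where $E$ "detaches" from the lower barrier. Sliding a suitable subsolution or paraboloid-type barrier from below, we obtain a touching point $x_0$ at which we need an upper bound on the fractional mean curvature. This is where Theorem~\ref{th::EL} enters: it gives $H_s[E](x_0)\le C\Lambda\rho^{\gamma-s}$ in a weak, integrated form at scale $\rho$ around $x_0$, which replaces the pointwise viscosity inequality. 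Choosing $\rho$ comparable to $a$, or to a power of it, the error contribution is $\Lambda a^{\gamma-s}\lesssim \epsilon a^{1+\gamma-s}$. This is negligible against the gain $c\,a^{1-s}$, or $c\,a$ in the linearized normalization, which the barrier produces from the fact that a positive proportion of $B_{1/2}\cap\{x_n<-a/2\}$ lies in $E$. Iterating gives a geometric decay of oscillation in dyadic balls, down to the scale $a$. We expect this step to be the main obstacle: since only the weak inequalities are available, we must carefully choose the microscopic scale at which Theorem~\ref{th::EL} is applied. The requirement is that the tail contributions from $\partial E$ outside the flat region (controlled by the flatness assumption at all larger scales, as in Caffarelli--Roquejoffre--Savin) and the error $\Lambda\rho^{\gamma-s}$ remain below the barrier's curvature gain.

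\emph{Improvement of flatness and conclusion.} By the Harnack inequality, the vertical rescalings $(x',x_n/a_k)$ of $\partial E$ converge, uniformly along a sequence with $a_k\to0$ and $\Lambda_k/a_k\to0$, to the graph of a Hölder function $u$. Passing the weak Euler--Lagrange inequalities to the limit, using $\Lambda_k r^{\gamma}/a_k\to0$, we find that $u$ is a viscosity solution of the linearized equation $(-\Delta)^{\frac{1+s}{2}}u=0$, which has $\cont^{1,\alpha_0}$ estimates. By compactness this gives the improvement of flatness: if $\partial E\cap B_1\subseteq\{|x\cdot\nu|\le a\}$ and $\Lambda\le\epsilon a$, then
\[
\partial E\cap B_{r_0}\subseteq\{|x\cdot\nu'|\le a r_0^{1+\alpha}\},
\]
for any $\alpha<\epsilon_0$ with $\epsilon_0$ smaller than $\min(\alpha_0,\gamma)$. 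Indeed, the rescaled set has error constant $\Lambda r_0^\gamma\le\epsilon a r_0^{1+\alpha}$ provided $\alpha<\gamma$, so the hypothesis propagates. Iterating at all points of $\partial E\cap B_{1/2}$ gives normals $\nu_k$ with $|\nu_{k+1}-\nu_k|\le Ca r_0^{k\alpha}$. This yields a $\cont^{1,\alpha}$ graph and fixes $\eta_0=\eta_0(\epsilon_0)$ through the initial smallness.
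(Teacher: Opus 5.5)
\textbf{There is a genuine gap, at exactly the step you flag as the main obstacle.}

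\emph{What the Euler--Lagrange step actually gives.} Theorem~\ref{th::EL} does not give $H_s[E](x_0)\le C\Lambda\rho^{\gamma-s}$ at the touching point, and the averaging argument behind it cannot produce such a bound. Almost-minimality, through the calibration identity, controls only the first variation averaged over a competitor $A\subseteq E^c$. It yields some $x_*\in A$ with $H_s[E\cup A_{t_*}](x_*)\lesssim \Lambda'(\operatorname{diam}A)^{n-s+\gamma}/|A|$, where $\Lambda'$ is the almost-minimality constant at the scale of your Harnack step. This bound is for a \emph{modified} set, and at a point other than $x_0$.

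\emph{Why it does not close.} The Harnack gain is of order $a\delta_0^{-1-s}$: it comes from the mass of $E\setminus P$ in $C_{\delta_0,a}$ at distance $\delta_0$, not from anything of size $a^{1-s}$. To contradict it, two constraints must hold.
\begin{itemize}
\item The point $x_*$ must lie at height $d\ll a$ above the barrier, since moving back to $x_0$ costs about $d\,\xi^{-1-s}$.
\item The ball touching $E\cup A_{t_*}$ at $x_*$ must have curvature $\kappa$ with $\kappa\,\xi^{1-s}\ll a$, where $\xi$ is the truncation radius.
\end{itemize}
So $A$ is a thin lens, of thickness $d$ and width $\ell\approx\sqrt{d/\kappa}$. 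Its volume is only known to be $\ge c_0 d^n$, from the density estimates. With $\kappa\approx a$, as in the paper, the averaged bound is of order $\Lambda' a^{-n}\ell^{-n-s+\gamma}$. Even if $A$ filled the whole lens, you would still need $\Lambda'\ll a^2$.

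\emph{Consequences for your normalization.} The normalization $\Lambda\le\epsilon a$ cannot work. Already with your own numbers, the error $\epsilon a^{1+\gamma-s}$ is not $o(a)$ when $\gamma<s$. The same issue affects passing the weak Euler--Lagrange inequalities to the limit in the linearization. The claimed range $\alpha<\min(\alpha_0,\gamma)$ is therefore unsupported.

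\textbf{The missing ingredient} is the quantitative microscopic inequality of Theorem~\ref{th::EL}. For a ball of radius $2r$ touching at $x_0$, the paper takes thickness $d=r^{1+2\epsilon}$, lens width $r^{1+\epsilon}$ and truncation radius $r^{1+\epsilon/2}$. Balancing the density-estimate loss against the truncation error gives $H^{r^{1+\epsilon/2}}_s[E](x_0)\le c\,r^{-s+\epsilon\frac{1-s}{2}}$ with $\epsilon=\frac{2\gamma}{2n+1+s-2\gamma}$.

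In the Harnack inequality this is used in the dyadic normalization $\Lambda'=2^{-k\gamma}\Lambda$, $a=2^{-k\alpha}$, so that $\Lambda'=\Lambda a^{\gamma/\alpha}$ is a high power of $a$. The truncation radius is $\rho=2^{-\frac{k}{2}(\epsilon-\epsilon\alpha-2\alpha)}$, and the error is $o(a)$ exactly when $\alpha<\frac{\epsilon}{2+\epsilon}=\frac{\gamma}{2n+1+s-\gamma}$. This is where $\epsilon_0$ comes from. Since the theorem only asserts some $\epsilon_0>0$, your scheme can be repaired along these lines, but not with the estimates you state.

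\textbf{A secondary point: tails.} Your single-scale improvement of flatness (flatness in $B_1$ plus $\Lambda\le\epsilon a$) ignores the nonlocal tail. The complement of $B_1$ can contribute an $O(1)$ amount to the curvature in $B_{1/2}$. You need the multi-scale hypothesis $\partial E\cap C_{2^j}\subseteq\{|x\cdot\nu_j|\le a2^{j(1+\alpha)}\}$, $j=0,\dots,k$, of Proposition~\ref{prop::improv_flat_rescaled}, with $\alpha<s$ so that the far tail $2^{-ks}$ is $O(a)$. You mention this in your Harnack paragraph, but your improvement-of-flatness statement and iteration drop it. This is also how $\eta_0<2^{-k_0(1+\alpha)}$ is chosen.
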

	
	We point out that Theorem~\ref{th::holder_reg_almost_min} extends\footnote{In particular, we have that $\epsilon_0=s$, when $\gamma=s$. Morever, whenever $\Lambda=0$, we recover the well-known analog for $s$-minimal surfaces (see \cite[Theorem 6.1]{MR2675483}).} \cite[Theorem 1.4]{MR5015001}
	to almost minimal sets without any further additional assumption,
	making the corresponding regularity theory complete and fully applicable in the setting of almost minimal sets.\medskip
	
	The strategy to prove our result goes as follows: we first show that almost minimal sets satisfy suitable \emph{uniform density estimates} (see Section~\ref{sec::UDE}), namely almost minimizers of $\Per_s$ cannot present thin cusps. Then, we employ an \emph{improvement of flatness} technique for almost minimal sets (see Theorem~\ref{th::flatness_improvement}) to deduce Theorem~\ref{th::holder_reg_almost_min}. The flatness improvement argument is implemented in Section~\ref{sec::improvement_of_flatness}. 
	
	At this stage, we notice that the estimates in Section~\ref{sec::improvement_of_flatness} can be improved for $\cont^{1,\alpha}$ sets. Therefore, repeating iteratively the argument above, we refine Theorem~\ref{th::holder_reg_almost_min} and obtain an explicit H\"older exponent.
	
	\begin{cor} \label{cor::optimal_reg}
		Let~$s\in(0,1)$. Let~$E$ be a~$(\Lambda,\gamma)$-almost minimizer for the $s$-Perimeter in $B_1$.
		
		Then, there exist~$\eta_0>0$, depending only on~$n$,~$s$,~$\Lambda$, and~$\gamma$, such that if
		$$\partial E\cap B_1 \subseteq \{|x_n|\leq \eta_0\},$$
		then~$\partial E\cap B_{1/2}$ is a~$\cont^{1,\alpha}$-surface, for any~$\alpha\in(0,\overline{\epsilon})$, where
		\begin{equation*}
			\overline{\epsilon} = \frac{-(n+s+2-2\gamma)+\left[ (n+s+2-2\gamma)^2+4\gamma(n+s-\gamma)\right]^{1/2}}{2(n+s-\gamma)}.
		\end{equation*}
	\end{cor}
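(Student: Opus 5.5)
The plan is a bootstrap. We run the improvement of flatness scheme of Section~\ref{sec::improvement_of_flatness} again, each time feeding in the regularity already obtained. Theorem~\ref{th::holder_reg_almost_min} gives that $\partial E\cap B_{1/2}$ is a $\cont^{1,\alpha_0}$ graph for some $\alpha_0\in(0,\epsilon_0)$, after a preliminary rescaling and covering so that flatness holds around every boundary point. The key observation, already announced before the statement, is that the error terms in the flatness improvement depend on the a priori regularity of $\partial E$.

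The loss in the argument comes from the weak Euler--Lagrange inequalities of Theorem~\ref{th::EL}. These control $H_s$ only after averaging at a microscopic scale $\delta=r^{\theta}$ inside a ball of radius $r$. Two errors must be balanced:
\begin{itemize}
\item the almost-minimality defect $\Lambda \delta^{n-s+\gamma}$, normalised by $\delta^{n}$, which is large when $\delta$ is small;
\item the oscillation of the non-local curvature across the $\delta$-neighbourhood, which is small when $\delta$ is small.
\end{itemize}
For a merely flat set the second error is crude. If $\partial E$ is a $\cont^{1,\alpha}$ graph, the symmetric-difference estimates for $H_s$ at two nearby points gain a factor $\delta^{1+\alpha}$, up to a power $r^{-(n+s)}$ coming from the kernel. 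I will redo the estimates of Section~\ref{sec::improvement_of_flatness} with this gain and optimise in $\theta$.

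The expected outcome is a recursion of the form
\begin{equation*}
\alpha_{k+1}=g(\alpha_k):=\frac{\gamma(1+\alpha_k)}{(n+s-\gamma)\alpha_k+n+s+2-\gamma}.
\end{equation*}
Here $\alpha_{k+1}$ is any exponent for which the improvement of flatness holds with geometric rate $\rho^{1+\alpha_{k+1}}$, given that $\partial E\in\cont^{1,\alpha_k}$. The exact form of $g$ is what I expect the computation to produce, and it must be confirmed. It is, however, forced by the target: the fixed points of $g$ satisfy
\begin{equation*}
(n+s-\gamma)\epsilon^2+(n+s+2-2\gamma)\epsilon-\gamma=0,
\end{equation*}
whose positive root is exactly $\overline{\epsilon}$.

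The properties of $g$ close the argument. On $[0,\infty)$ the map $g$ is a positive increasing Möbius map, since the sign of its derivative is that of $\gamma(n+s+2-\gamma)-\gamma(n+s-\gamma)>0$. It has the unique positive fixed point $\overline{\epsilon}$, with $g(\alpha)>\alpha$ for $\alpha<\overline{\epsilon}$. Hence the iterates $\alpha_k$ increase and converge to $\overline{\epsilon}$. Given any $\alpha<\overline{\epsilon}$, finitely many steps therefore give $\cont^{1,\alpha}$ regularity, possibly on shrinking balls $B_{1/2+2^{-k-2}}$ handled by covering. The final $\eta_0$ is the one from Theorem~\ref{th::holder_reg_almost_min}, and it depends only on $n$, $s$, $\Lambda$, $\gamma$.

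The main obstacle is the quantitative step: obtaining the improved curvature-oscillation bound with the precise exponents. This requires controlling, uniformly at scale $\delta$, the non-local contributions of the $\cont^{1,\alpha_k}$ graph both near the point and far from it. It also requires checking that the compactness step of the improvement of flatness yields the sharpened rate rather than the original $\epsilon_0$. To do this, I will first compute the improvement in the linearised (graph) setting, then track the $\theta$-optimisation explicitly.
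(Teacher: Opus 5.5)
\textbf{Verdict: there is a genuine gap.} Your overall architecture matches the paper's: feed the $\cont^{1,\alpha_k}$ regularity back into the small-scale Euler--Lagrange inequality, get a better exponent, iterate an increasing M\"obius map and pass to its fixed point. But the step that carries all the content is missing.

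\textbf{The recursion is guessed, not derived.} ``Forced by the target'' is not a valid justification, because one fixed point does not determine the map. The paper's recursion (Corollary~\ref{cor::better_reg}),
\[
\epsilon_{j+1}=\frac{\gamma+\epsilon_j(n+\gamma-1)}{2n+1+s-\gamma+\epsilon_j(n+s-\gamma)},
\]
has the same positive fixed point $\overline{\epsilon}$ but differs from your $g$ for $n\geq2$. Your $g$ also fails a basic consistency check. With no extra regularity fed in, the scheme must return the exponent of Theorem~\ref{th::holder_reg_almost_min}, namely $\epsilon/(2+\epsilon)=\gamma/(2n+1+s-\gamma)$ with $\epsilon$ from \eqref{eq::epsilon_choice}. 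The paper's map gives exactly this at $\epsilon_j=0$, whereas $g(0)=\gamma/(n+s+2-\gamma)$ is strictly larger when $n\geq2$.

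\textbf{The missing idea is where the regularity enters: it acts on your \emph{first} error, not the second.} The normalisation by $\delta^n$ is the density estimate $|A_d|\geq c_0d^n$ in the calibration argument of Proposition~\ref{prop::EL_partial1}. There the curvature bound is $\Lambda\rho^{n-s+\gamma}/|A_d|$, and this is the term that caps $\epsilon$ in \eqref{eq::epsilon_choice}.

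Lemma~\ref{lemma::holder_ude} uses that a $\cont^{1,\alpha}$ boundary lying above the touching ball can rise at most like $|x'|^{1+\alpha}$. Hence the raised ball $B_r((d-r)e_n)$ emerges over a disc of radius about $d^{1/(1+\alpha)}\gg d$, and $|A_d|\gtrsim r^{(1+2\epsilon)\frac{n+\alpha}{1+\alpha}}$, much larger than $d^n$. Using this in place of Proposition~\ref{prop::UDE} lowers the curvature bound and allows a larger $\epsilon$ in the balance \eqref{eq::epsilon_choice}. That in turn gives a larger range $\alpha<\epsilon/(2+\epsilon)$ in the Harnack inequality and the linearisation step.

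\textbf{The terms you propose to improve are unlikely to help.} Your ``oscillation'' terms correspond to the local contribution in $B_\xi$ and the shift from $x_r$ to $x_0$ in Proposition~\ref{prop::EL_partial2} and Theorem~\ref{th::EL}. These are controlled by the touching paraboloid of opening $-1/r$ and by the kernel difference, and it is not clear that regularity of $\partial E$ improves them at all. Moreover, every exponent here is below $\gamma\leq s$. So $H_s$ need not be pointwise defined on a $\cont^{1,\alpha_k}$ boundary, and any two-point comparison would have to be done with truncated curvatures.

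As written, the proposal never establishes an improvement of the exponent, so it does not prove the corollary.
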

	\begin{rem}
		We point out that, for $\gamma$ small enough, we have that
		\begin{equation*}
			\begin{split}
				&\lim_{s\to1^-} \overline{\epsilon} = \frac{-(n+3-2\gamma)+\left[ (n+3-2\gamma)^2+4\gamma(n+1-\gamma)\right]^{1/2}}{2(n+1-\gamma)} \\
				&\qquad\leq \frac{2\gamma(n+1-\gamma)}{2(n+3-2\gamma)(n+1-\gamma)} = \frac{\gamma}{n+3-2\gamma} \leq \alpha_{opt},
			\end{split}
		\end{equation*}
		where
		\begin{equation*}
			\alpha_{opt} := \frac{n\gamma}{n+1-\gamma}
		\end{equation*}
		is the optimal H\"older exponent for the classical Massari's Problem, as shown in~\cite{MR4594364}.
		
		On the other hand, we have that
		\begin{equation*}
			\lim_{\gamma\to s} \overline{\epsilon} = \frac{-(n-s+2)+\left[ (n-s+2)^2+4sn\right]^{1/2}}{2n} \leq \frac{2sn}{2n(n-s+2)} \leq s,
		\end{equation*}
		whenever $s$ is small enough, which is consistent with \cite[Theorem 1.4]{MR5015001}.
	\end{rem}

	The proofs of Theorem~\ref{th::holder_reg_almost_min} and Corollary~\ref{cor::optimal_reg} can be found in Section~\ref{sec::holder_almost_min}.
	
	\subsection{Hausdorff dimension of the singular set}
	
	The next step in analyzing the regularity of almost minimal sets is to estimate the Hausdorff dimension of the so-called singular set. To do so, we introduce the following definitions.
	
	\begin{definition} \label{def::sing_reg_points}
		Let~$E$ be a~$(\Lambda,\gamma)$-almost minimal set in~$B_1$, for some $\Lambda\geq0$ and $\gamma\in(0,s]$, and let $x_0\in B_1\cap\partial E$. Let also $\{\lambda_k\}_k$ be a sequence of positive real numbers such that $\lambda_k\to+\infty$, as $k\to+\infty$. 
		
		If the blow-up sequence $\{\lambda_k (E-x_0)\}_k$ converges to a $0$-homogeneous set $C$, we say that $C$ is a tangent cone to $E$ at $x_0$. Moreover, we say that $x_0$ is a regular point if $C$ is a half-space.
		
		We say that~$x$ is singular if it is not regular. We denote by~$\Sigma_E$ the set of singular points for~$E$. In particular, $\Sigma_E$ is a closed set in~$\R^n$.
	\end{definition}
	
	Adapting the result in Sections 3 and 4 of \cite{MR5015001} to Definition~\ref{def::almost_min}, we deduce an upper bound on the Hausdorff dimension of 
	$\Sigma_E$. The precise statement goes as follows: 
	
	\begin{theorem} \label{th::haus_dim_singular_almost}
		Let~$E$ be a~$(\Lambda,\gamma)$-almost minimal set in $B_1$. 
		Then, $\dim_\haus\ (\Sigma_E)\leq n-3$.
	\end{theorem}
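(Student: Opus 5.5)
The plan is to follow the classical Federer dimension reduction argument, as carried out for $s$-minimal surfaces in \cite{MR2675483}, combined with two inputs: the $\cont^{1,\alpha}$ regularity from flatness (Theorem~\ref{th::holder_reg_almost_min}), and the fact that blow-ups of almost minimal sets are genuine $s$-minimal cones. The first step is compactness. By the uniform density estimates of Section~\ref{sec::UDE}, and a uniform bound on $\Per_s(E_{x_0,\lambda},B_R)$ obtained by comparison with $E\cup B_R$ (a standard energy estimate that uses almost minimality), the rescalings $E_{x_0,\lambda}:=\lambda(E-x_0)$ are precompact in $L^1_{\loc}$. Their boundaries converge in the Hausdorff sense on compact sets. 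The key scaling observation is that $E_{x_0,\lambda}$ is $(\Lambda\lambda^{-\gamma},\gamma)$-almost minimal in $B_{\lambda}$. Indeed, a competitor in $\widetilde B_r$ for the rescaled set corresponds to a competitor in a ball of radius $r/\lambda$ for $E$, and the error becomes $\Lambda (r/\lambda)^{n-s+\gamma}\lambda^{n-s}=\Lambda\lambda^{-\gamma}r^{n-s+\gamma}$. Since $\gamma>0$, the error vanishes in the limit. Using the lower semicontinuity and the cut-and-paste compactness argument of \cite[Theorem 3.3]{MR2675483}, adapted as in Sections 3 and 4 of \cite{MR5015001}, every limit is an $s$-minimal set in $\R^n$.

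The second step is to show that tangent cones exist and are $0$-homogeneous $s$-minimal cones. For this I would use the extension monotonicity formula. Let $\widetilde u$ be the Caffarelli--Silvestre extension of $\chi_E-\chi_{E^c}$, and set $\Phi(r)=r^{s-n}\int_{B_r^+}y^{1-s}|\nabla \widetilde u|^2$. For almost minimizers, I would show that $r\mapsto\Phi(r)+Cr^{\gamma}$ is nondecreasing. The idea is to repeat the proof of monotonicity, comparing with the $0$-homogeneous extension in $B_r^+$, and absorbing the almost minimality error $\Lambda r^{n-s+\gamma}$ after the $r^{s-n}$ normalization. The resulting term $\Lambda r^{\gamma-1}$ is integrable in $r$, which is exactly where $\gamma>0$ is needed. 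Blow-up limits then have constant $\Phi$, hence are homogeneous, and are minimal cones by step one. The same almost monotonicity gives upper semicontinuity of the density. Together with the Hausdorff convergence of boundaries and the $\cont^{1,\alpha}$ theorem, this yields the standard fact that singular points of $E_{x_0,\lambda_k}$ accumulate only on singular points of the limit. If a limit point were regular for the limit, then some small ball around it would see a flat limit. Then $\partial E_{x_0,\lambda_k}$ would be $\eta_0$-flat there for large $k$, with the almost minimality constant $\Lambda\lambda_k^{-\gamma}\le\Lambda$. By Theorem~\ref{th::holder_reg_almost_min} these points would be regular, a contradiction.

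The third step is dimension reduction. Suppose $\haus{d}(\Sigma_E)>0$ for some $d>n-3$. By the usual density argument, there is a point $x_0$ and a blow-up cone $C$ with $\haus{d}_\infty(\Sigma_C\cap B_1)>0$, using upper semicontinuity of $\haus{d}_\infty$ under the convergence proven above. Blowing up again at a point of $\Sigma_C\setminus\{0\}$ produces a cone that is invariant along one direction, and iterating splits off lines. We eventually reach a singular $s$-minimal cone of the form $C'\times\R^{n-k}$ with $C'\subseteq\R^k$, $k\le 2$, singular at the origin. This contradicts the classification of $s$-minimal cones in $\R^2$ (only half-planes, by Savin--Valdinoci), together with the triviality in dimension one. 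The reduction therefore gives $\dim_\haus\Sigma_E\le n-3$.

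I expect the main obstacle to be the almost monotonicity formula in step two. The almost minimality inequality is only an inequality for the $s$-perimeter with localized competitors, and it must be transferred to the extended Dirichlet energy on half-balls. The error terms must scale like $r^{n-s+\gamma}$, in the same way as in \cite{MR5015001}, but now with a general $\gamma<s$. If this transfer is problematic, I would fall back on proving homogeneity of blow-ups directly: first obtain minimality of limits from step one, then apply the monotonicity formula for $s$-minimal sets along a diagonal sequence, controlling the error uniformly by $\Lambda\lambda^{-\gamma}$.
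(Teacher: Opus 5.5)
Your proposal is correct and follows essentially the same route as the paper: an almost-monotone quantity $\Xi_E(r)+\frac{n-s}{\gamma}\hat{\Lambda}r^{\gamma}$ built from the extension; blow-ups $\lambda(E-x_0)$ that are $(\Lambda\lambda^{-\gamma},\gamma)$-almost minimal and converge to $s$-minimal cones; $\cont^{1,\alpha}$ regularity near points with a half-space tangent cone via Theorem~\ref{th::holder_reg_almost_min}; and the standard dimension reduction down to the classification of $s$-minimal cones in $\R^2$. The differences are minor: you control the accumulation of singular points directly through the flatness theorem and Hausdorff convergence of boundaries, where the paper also records the energy gap of Theorem~\ref{th::energy_gap}, and your claim that blow-up limits have constant $\Xi$ tacitly needs the energy convergence of Lemma~\ref{lemma::phi_continuous_in_E}, not just $L^1_{\loc}$ convergence.
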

	
	The idea to establish Theorem~\ref{th::haus_dim_singular_almost} relies on a suitable \emph{monotonicity formula} and a \emph{blow-up argument}, which allow a classification of tangent cones and ultimately yields the desired estimate of the Hausdorff dimension of~$\Sigma_E$. 
	
	In particular, let $E\subseteq\R^n$ be a~$(\Lambda,\gamma)$-almost minimal set. We consider the $(n+1)$-dimensional extension $\tilde{u}$ of $\chi_E-\chi_{E^c}$ constructed as in \cite{MR2354493} (see also \cite[Section~7]{MR2675483}) and we define the function
	\begin{equation}\label{eq::defphimono}
		\Phi_E(r) := r^{s-n}\int_{B_r^+} z^{1-s}|\nabla\widetilde{u}|^2\,dx\,dz + \frac{n-s}{\gamma}\hat{\Lambda}r^{\gamma},
	\end{equation}
	where~$ \omega_n$ is the Lebesgue measure of the~$n$-dimensional unit ball, and~$\hat{\Lambda}$ is defined in Remark~\ref{rem::hat_Lambda} down below.
	Then, we have the following:
	\begin{prop}[Monotonicity formula] \label{prop::Phi_monotone}
		If~$E$ is a~$(\Lambda,\gamma)$-almost minimal set in $B_1$ such that~$0\in\partial E$, then~$\Phi_E(r)$ is increasing for any~$r$ small enough. 
	\end{prop}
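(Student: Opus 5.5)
The plan is to follow the Caffarelli--Roquejoffre--Savin monotonicity argument (see \cite[Section~7]{MR2675483}), comparing $\widetilde{u}$ on $B_r^+$ with its $0$-homogeneous extension. We exploit the almost minimality only through an energy comparison in the extension, so no Euler--Lagrange equation is needed.

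The first step is to translate Definition~\ref{def::almost_min} into a statement about the extension. Recall that, for a competitor $v$ in $\R^{n+1}_+$ with $v=\widetilde{u}$ outside $B_r^+$ and trace $\chi_F-\chi_{F^c}$ on $\{z=0\}$, the weighted Dirichlet energy $\int z^{1-s}|\nabla v|^2$ in a large half-ball controls $\Per_s(F,\cdot)$ up to constants and terms independent of $F$. From this we obtain
\begin{equation*}
\int_{B_r^+} z^{1-s}|\nabla\widetilde{u}|^2 \leq \int_{B_r^+} z^{1-s}|\nabla v|^2 + \hat{\Lambda} r^{n-s+\gamma}
\end{equation*}
for every admissible $v$ whose trace is $\pm1$-valued in $B_r$ and which agrees with $\widetilde{u}$ on $\partial B_r^+\cap\{z>0\}$. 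Here $\hat{\Lambda}$ is the constant of Remark~\ref{rem::hat_Lambda}, a multiple of $\Lambda$. This is the analogue of \cite[Proposition~7.1]{MR2675483} and of the corresponding step in \cite{MR5015001}. The proof is the standard one: the minimal extension of a given trace minimizes the energy, and the difference of energies equals a normalizing constant times the difference of $s$-perimeters.

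Next, we fix $r$ and take the competitor $v(X)=\widetilde{u}(rX/|X|)$ in $B_r^+$, the $0$-homogeneous extension of the boundary values on $\partial B_r\cap\{z>0\}$. Its trace is still $\pm1$-valued, and scaling gives
\begin{equation*}
\int_{B_r^+} z^{1-s}|\nabla v|^2 = \frac{r}{n-s}\int_{\partial B_r^+} z^{1-s}|\nabla_\tau \widetilde{u}|^2.
\end{equation*}
Set $I(r)=\int_{B_r^+} z^{1-s}|\nabla\widetilde{u}|^2$. Then $I'(r)=\int_{\partial B_r^+}z^{1-s}(|\nabla_\tau\widetilde u|^2+|\partial_\nu\widetilde u|^2)$, and the comparison yields
\begin{equation*}
(n-s)I(r) \leq r I'(r) - r\int_{\partial B_r^+} z^{1-s}|\partial_\nu\widetilde u|^2 + (n-s)\hat{\Lambda} r^{n-s+\gamma}.
\end{equation*}
Multiplying by $r^{s-n-1}$ gives
\begin{equation*}
\frac{d}{dr}\big(r^{s-n}I(r)\big) \geq r^{s-n}\int_{\partial B_r^+} z^{1-s}|\partial_\nu\widetilde u|^2 - (n-s)\hat{\Lambda}r^{\gamma-1}.
\end{equation*}
The last term is exactly $-\frac{d}{dr}\big(\frac{n-s}{\gamma}\hat\Lambda r^\gamma\big)$, so $\Phi_E'(r)\geq r^{s-n}\int_{\partial B_r^+}z^{1-s}|\partial_\nu\widetilde u|^2\geq0$. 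Since $I$ is monotone, and hence differentiable for a.e.\ $r$, this derivative computation should be read in the sense of absolutely continuous functions, with the same conclusion. The restriction to $r$ small is needed so that $B_r\Subset B_1$, which makes the almost minimality available, and so that the extension comparison is localized.

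The main obstacle is the first step: making the energy comparison rigorous with an error of exactly $\hat\Lambda r^{n-s+\gamma}$. The difficulty is that the competitor $v$ changes the trace only inside $B_r$, while the extension energy identity involves far-away contributions. To handle this, we work with the localized identity in \cite[Section~7]{MR2675483}, which states that minimal energy minus perimeter equals a quantity depending only on data outside $B_r$, and we check that the nonlocal tails cancel between $E$ and $F$. This is where the constant $\hat\Lambda$ of Remark~\ref{rem::hat_Lambda} is produced.
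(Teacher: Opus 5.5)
Your proposal is correct and follows essentially the same route as the paper: you use the extension characterization of almost minimality (Proposition~\ref{prop::char_almost_min_ext}), the $0$-homogeneous competitor with the scaling identity $\int_{B_r^+}z^{1-s}|\nabla\widetilde v_r|^2=\frac{r}{n-s}\int_{\partial B_r^+}z^{1-s}|\nabla^T\widetilde u|^2$, and an a.e.\ derivative computation for $\Phi_E$; you also keep the normal-derivative term, which the paper discards. Two points need tightening: the trace of $\widetilde v_r$ is a well-defined $\pm1$-valued function only for a.e.\ $r$ (the paper invokes \cite[Lemma~3.3]{MR5015001} for this), which suffices here, and the absolute continuity of $I$ comes from writing $I(r)=\int_0^r\int_{\partial B_t^+}z^{1-s}|\nabla\widetilde u|^2$, not from monotonicity of $I$.
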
 
	
	A proof of Theorem~\ref{th::haus_dim_singular_almost} is given along with the blow-up argument in Section~\ref{sec::proof_haus_dim_sing}. Moreover, since the arguments involved are variations of those presented in~\cite[Section~3]{MR5015001}, we postpone some details regarding the monotonicity formula to Appendix~\ref{sec::monotonicity}. 
	
	\begin{rem}
		We point out that the sharpness of Theorem~\ref{th::haus_dim_singular_almost} is still an open problem, even for $s$-minimal surfaces. Indeed, in the classical setting, it is well know that minimal surfaces in $\R^n$ are smooth up to dimension $n=7$, whereas the Simons cone, defined by
		$$\{(x,y)\in\R^4\times\R^4 \mbox{ s.t. }|x|^2=|y|^2\},$$ 
		provides an example of a singular minimal surface in $\R^8$.
	\end{rem}\medskip
	
	We now proceed with the proof of the main results.
	
	
	\section{Uniform density estimates} \label{sec::UDE}
	As claimed in the introduction, the first step to prove the regularity for almost minimal sets is to show the following density estimates:
	\begin{prop}[Uniform density estimates] \label{prop::UDE} 
		Let~$E$ be a~$(\Lambda,\gamma)$-almost minimal set in $B_1$.
		Then, there exist constants~$r_0>0$ and~$c_0\in(0,1)$, depending only on~$n$, $s$,~$\Lambda$ and~$\gamma$, such that, for any~$x_0\in(\partial E)\cap B_1$ and~$r\in(0,\min\{r_0,\mbox{dist}(x_0,\partial B_1)\})$,
		\begin{equation}\label{eq::unif_dens_estimates}
			\begin{split}
				&|E\cap B_r(x_0)|\geq c_0 r^n\\
				\mbox{and}\quad&|E^c\cap B_r(x_0)|\geq c_0 r^n.
			\end{split}
		\end{equation}
	\end{prop}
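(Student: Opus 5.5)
We follow the classical argument of Caffarelli--Roquejoffre--Savin for $s$-minimal sets, using the fractional isoperimetric inequality to build an ODE-type inequality for the volume function. The almost-minimality error is handled by the lower-order scaling $r^{n-s+\gamma}$. By symmetry (complements of almost minimal sets are almost minimal, since $\Per_s(E,\Omega)=\Per_s(E^c,\Omega)$) it suffices to prove the bound for $E$. After translating we take $x_0\in\partial E$, and we interpret $x_0\in\partial E$ in the measure-theoretic sense, so that $|E\cap B_r(x_0)|>0$ for every $r>0$.

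Set $V(r):=|E\cap B_r(x_0)|$ and $A=E\cap B_r(x_0)$. Testing \eqref{eq::almost_min} with $F=E\setminus B_r(x_0)$, which is admissible for $r<\mathrm{dist}(x_0,\partial B_1)$, cancels the common interactions. What remains is
\[
\mathcal{L}_s(A,E^c)\le \mathcal{L}_s(A,E\setminus B_r(x_0))+\Lambda r^{n-s+\gamma}.
\]
We add $\mathcal{L}_s(A,E\setminus B_r)$ to both sides. The left side becomes $\mathcal{L}_s(A,A^c)=\Per_s(A)\ge c\,V(r)^{(n-s)/n}$ by the fractional isoperimetric inequality. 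The right side is bounded by $2\mathcal{L}_s(A,B_r^c)+\Lambda r^{n-s+\gamma}$. The standard estimate
\[
\mathcal{L}_s(A,B_r^c)\le C\int_0^r \frac{V'(\rho)}{(r-\rho)^{s}}\,d\rho
\]
then holds, since $V'(\rho)=\mathcal{H}^{n-1}(E\cap\partial B_\rho)$ a.e. Integrating in $r$ over $(0,t)$ and using Fubini gives
\[
c\int_0^t V(r)^{\frac{n-s}{n}}\,dr\le C\,t^{1-s}V(t)+\Lambda t^{n-s+\gamma+1}.
\]

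Now we argue by contradiction, via the usual discrete iteration. Assume $V(r)\le c_0 r^n$ for some small $r$. At the scales $r_k=r(1/2+2^{-k-1})$, the inequality forces $V(r_{k+1})$ to decay superlinearly relative to $V(r_k)$, namely $V(r_{k+1})\lesssim 2^{k\theta}r^{-\sigma}V(r_k)^{1+\delta}+\text{error}$. This drives $V(r/2)=0$, which contradicts $x_0\in\partial E$.

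The main obstacle is the error term $\Lambda t^{n-s+\gamma+1}$. It cannot simply be absorbed, because a priori $V$ may be tiny and then the error dominates. We handle it by choosing $r_0$ small. At scale $r$ the error is $\Lambda r^{\gamma}\cdot r^{n-s+1}$. Normalizing by $r^{n-s+1}$, which is the natural size of the left side when $V\sim c_0 r^n$, the error becomes $\Lambda r^\gamma$. For $r<r_0$ this is smaller than any prescribed fraction of $c_0^{(n-s)/n}$. Since $\gamma>0$, this step genuinely uses the lower-order scaling.

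To close the induction we therefore run it in the normalized form. We first fix $c_0$ small depending on $n,s$, and then $r_0$ with $\Lambda r_0^\gamma\ll c_0^{(n-s)/n}$. We either keep the error smaller than the main term at every step, or stop the iteration at the first scale where $V(r_k)\lesssim(\Lambda r_k^\gamma)^{n/(n-s)}r_k^n$. In the latter case we restart the argument from that scale with the pure isoperimetric bound: the competitor $E\setminus B_\rho$ gives $\Per_s(A)\le C\rho^{n-s}(V/\rho^n)^{\text{power}}+\Lambda\rho^{n-s+\gamma}$, and this again yields vanishing. The careful bookkeeping of this dichotomy, with constants depending only on $n,s,\Lambda,\gamma$, is where we expect the technical work to lie.
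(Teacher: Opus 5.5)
Your setup (testing with $E\setminus B_r(x_0)$, the fractional isoperimetric inequality, the co-area bound, and the Fubini-integrated inequality) is correct and is the same as in the paper. The last step, however, has a genuine gap, exactly at the point you call the main obstacle.

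\textbf{Where the iteration fails.} Write $v_k:=V(r_k)/r^n$. Your iteration then reads $v_{k+1}\le (C2^k v_k+\Lambda r^\gamma)^{\frac{n}{n-s}}$. All radii $r_k$ lie in $(r/2,r)$, so the error $\Lambda r^\gamma$ does not shrink along the iteration. It can push $v_k$ down to about $(\Lambda r^\gamma)^{\frac{n}{n-s}}$, never to $0$. So you always land in the second branch of your dichotomy, and the restart does not close it, for two reasons.
\begin{itemize}
\item Any bound on $\mathcal{L}_s(A,B_\rho^c)$ in terms of $V$ alone is at best of order $\rho^{n-s}(V/\rho^n)^{1-s}$; a thin shell near $\partial B_\rho$ attains this. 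Since $1-s<\frac{n-s}{n}$, this upper bound is compatible with the isoperimetric lower bound for small $V$, so no contradiction arises.
\item Restarting at smaller scales only gives $V(\rho)\lesssim(\Lambda\rho^\gamma)^{\frac{n}{n-s}}\rho^n$, i.e.\ zero density at $x_0$. That does not contradict $x_0\in\partial E$.
\end{itemize}
In fact, no argument that uses only inequalities centered at $x_0$ can succeed when $\Lambda>0$. Take $V(\rho)=\epsilon\rho^N$ with $N\geq n+\frac{\gamma n}{n-s}$ and $\epsilon$ small. It is positive for every $\rho>0$ and satisfies both your pointwise and your integrated inequalities for all small $t$. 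The reason is that the left-hand side, of order $t^{N\frac{n-s}{n}+1}$, is dominated by $\Lambda t^{n-s+\gamma+1}$. (Your scheme is the right one when $\Lambda=0$.)

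\textbf{The missing idea: change the center.} The paper uses your integrated inequality only for $r\in(\frac12,1)$. This gives $a(1/2)^{\frac{n-s}{n}}\le Ca(1)+C\Lambda$, hence only smallness preservation: $a(1)\le\delta_0$ and $\Lambda\le\delta_0$ imply $a(1/2)\le\delta_0$ (Lemma~\ref{lemma::keep_small_density}). No decay to zero is required. The argument then runs as follows.
\begin{itemize}
\item If $|E\cap B_1|$ is small, then for any Lebesgue point $y$ of $E$ in $B_{1/2}$, the density of $E$ in $B_{1/2}(y)$ is small.
\item Iterate the lemma on the balls $B_{2^{-k}}(y)$. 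After rescaling, the constant becomes $2^{-k\gamma}\Lambda\le\delta_0$, so the density stays at most $\delta_0<1$ at every scale. This contradicts density $1$ at $y$.
\item Hence $|E\cap B_{1/2}|=0$ (Lemma~\ref{lemma::empty_ball}), and this is what contradicts $x_0\in\partial E$.
\end{itemize}
It is the density-one information at Lebesgue points of $E$, not the mere positivity of $V$ at $x_0$, that beats the additive error. Choosing $r_0$ with $\Lambda r_0^\gamma\le\delta_0$ then gives the statement.
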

	
	As a consequence, we obtain that local convergence\footnote{
		As customary, we say that a sequence of sets $\{E_k\}_k$ converges to some set $E$ in $L^1_{\mbox{loc}}(\R^n)$, and we simply write $E_k\to E$ in $L^1_{\mbox{loc}}(\R^n)$, if $\chi_{E_k}$ converges to $\chi_E$ in the $L^1_{\mbox{loc}}(\R^n)$-topology.
	} of almost minimal sets implies uniform convergence.
	
	\begin{cor}[Uniform convergence for almost minimal boundaries] \label{cor::boundary_conv}
		Let~$\{\Lambda_k\}_k$ be a sequence in $[0,+\infty)$, and let $\{\gamma_k\}_k$ be a sequence in $[0,s)$. Let also~$\{E_k\}_k$ be a sequence of~$(\Lambda_k,\gamma_k)$-almost minimal sets in $B_1$. 
		
		Suppose that~$\Lambda_k\to\Lambda$, for some $\Lambda\in[0,+\infty)$,~$\gamma_k\to\gamma$, for some $\gamma\in(0,s]$, and~$E_k\to E$ in~$L^1_{\loc}(\R^n)$, for some set~$E$ of finite~$s$-perimeter, as~$k\to+\infty$.
		
		Then, $E$ is $(\Lambda,\gamma)$-almost minimal in $B_1$ and
		$$ \lim_{k\to+\infty} \mbox{d}_\haus{}(\partial E_k,\partial E)=0 .$$		
	\end{cor}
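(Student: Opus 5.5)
The proof has two parts: first pass the almost-minimality to the limit $E$, then use the uniform density estimates of Proposition~\ref{prop::UDE} to upgrade $L^1_{\loc}$ convergence to Hausdorff convergence of the boundaries.

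\emph{Step 1: almost minimality of the limit.} Fix a ball $\widetilde{B}_r\Subset B_1$ and a competitor $F$ with $F\Delta E\subseteq\widetilde{B}_r$. Following \cite[Theorem 3.3]{MR2675483}, choose a slightly larger concentric ball $\widetilde{B}_{r+\delta}\Subset B_1$ and set
\[
F_k:=(F\cap\widetilde{B}_{\rho})\cup(E_k\setminus\widetilde{B}_{\rho}),
\]
with $\rho\in(r,r+\delta)$ chosen by Fubini so that the interaction error across $\partial\widetilde{B}_\rho$ is controlled. This error is bounded by $\mathcal{L}_s\big(\widetilde{B}_\rho,(E_k\Delta E)\setminus\widetilde{B}_\rho\big)$ plus a term that vanishes along a subsequence, because $|E_k\Delta E|\to0$ locally. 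Here I would use the standard estimate $\int_r^{r+\delta}\mathcal{L}_s(B_\rho,A\setminus B_\rho)\,d\rho\lesssim$ a quantity that is small when $|A\cap B_{r+\delta}|$ is small and $A$ is arbitrary far away, since the far-away part of the kernel is integrable.

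Applying \eqref{eq::almost_min} to $E_k$ with competitor $F_k$ in $\widetilde{B}_\rho$ gives
\[
\Per_s(E_k,\Omega')\leq\Per_s(F_k,\Omega')+\Lambda_k\rho^{n-s+\gamma_k}.
\]
Lower semicontinuity of $\Per_s$ under $L^1_{\loc}$ convergence (Fatou) then yields
\[
\Per_s(E,\Omega')\leq\Per_s(F,\Omega')+\Lambda(r+\delta)^{n-s+\gamma}+o(1),
\]
and letting $\delta\to0$ gives almost minimality of $E$. The main obstacle is handling the tails of the far-field interaction, which are not compactly supported. I would first try the cutoff argument of \cite[Theorem 3.3]{MR2675483} verbatim, checking that the extra term $\Lambda_k\rho^{n-s+\gamma_k}$ passes to the limit trivially since $\Lambda_k\to\Lambda$ and $\gamma_k\to\gamma$. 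Note also that the constants $r_0,c_0$ of Proposition~\ref{prop::UDE} depend continuously on $(\Lambda,\gamma)$ and so can be chosen uniformly in $k$ provided $\gamma_k$ stays bounded away from $0$, which holds for $k$ large since $\gamma>0$.

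\emph{Step 2: Hausdorff convergence.} Work in a compact subset of $B_1$ and argue by contradiction. Suppose there exist $\epsilon>0$ and points $x_k\in\partial E_k$ with $\mathrm{dist}(x_k,\partial E)\geq\epsilon$. Up to a subsequence $x_k\to\bar{x}$, so $B_{\epsilon/2}(\bar x)$ lies entirely in $E$ or entirely in $E^c$ (up to null sets); say $E^c$. Proposition~\ref{prop::UDE} applied to $E_k$ at $x_k$ gives $|E_k\cap B_{\epsilon/4}(x_k)|\geq c_0(\epsilon/4)^n$, whereas $|E\cap B_{\epsilon/4}(x_k)|\to0$. This contradicts $L^1_{\loc}$ convergence.

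Conversely, suppose $y_k\in\partial E$ with $\mathrm{dist}(y_k,\partial E_k)\geq\epsilon$. Then $B_\epsilon(y_k)$ is contained in $E_k$ or in $E_k^c$. Applying Proposition~\ref{prop::UDE} to $E$, which is admissible by Step 1, we get that both $E$ and $E^c$ occupy a fixed proportion of $B_\epsilon(y_k)$, which again contradicts $E_k\to E$ in $L^1_{\loc}$. Together these give $\mathrm{d}_{\haus}(\partial E_k,\partial E)\to0$.
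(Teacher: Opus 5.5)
Your proposal is correct and follows essentially the same route as the paper, which defers to \cite[Corollary~2.3 and Appendix~B]{MR5015001}. Almost minimality passes to the limit through the cut-off/compactness argument of \cite[Theorem~3.3]{MR2675483}, using $\Lambda_k\rho^{n-s+\gamma_k}\to\Lambda\rho^{n-s+\gamma}$, and Hausdorff convergence of the boundaries then follows from Proposition~\ref{prop::UDE} with constants independent of $k$. Your justification of that uniformity is right: it needs $\Lambda_k$ bounded and $\gamma_k$ bounded away from $0$, since $r_0$ is fixed by $\Lambda r_0^{\gamma}\leq\delta_0$. This corrects the paper's remark, which states the bound on $\gamma_k$ in the wrong direction.
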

	
	Another consequence of Proposition~\ref{prop::UDE} is the so-called \emph{clean ball condition} for almost minimal sets.
	
	\begin{cor}[Clean ball condition for almost minimal sets] \label{cor::clean_ball}
		Let~$E$ be a $(\Lambda,\gamma)$-almost minimal set in~$B_1$.
		Let~$x\in\partial E$ and~$r>0$ such that~$B_r(x)\subseteq B_1$.
		
		Then, there exist a constant~$c>0$, depending only on~$n$, $s$,~$\Lambda$ and~$\gamma$, and points~$y_1\in E$ and~$y_2\in B_1\setminus E$ such that
		$$ B_{cr}(y_1) \subseteq E\cap B_r(x)\qquad {\mbox{and}} \qquad B_{cr}(y_2) \subseteq E^c\cap B_r(x).$$
	\end{cor}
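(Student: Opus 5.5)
The clean ball condition follows from the uniform density estimates of Proposition~\ref{prop::UDE} by a covering argument. The density estimates give a positive proportion of $E$ inside $B_r(x)$. We then show that if no ball of radius $cr$ is contained in $E\cap B_r(x)$, boundary points are dense at scale $cr$, and the density estimates applied around those boundary points force $E^c$ to occupy too much of $B_{r/2}(x)$. The argument for $E^c$ is symmetric, so we only treat $y_1\in E$.

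First, reduce to small radii. If $r\geq r_0$, replace $r$ by $r_0$. The statement for $r_0$ gives a ball of radius $c r_0\geq c\,r_0\, r$ (since $r<1$) inside $B_{r_0}(x)\subseteq B_r(x)$, so it suffices to prove the claim for $r<r_0$ after shrinking $c$. We may also work in $B_{r/2}(x)$, where every point $z$ satisfies $\mbox{dist}(z,\partial B_1)>r/2$. This lets us apply Proposition~\ref{prop::UDE} at any $z\in\partial E\cap B_{r/2}(x)$ with radius up to $r/2$.

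Now fix a small $\delta\in(0,1/8)$ and argue by contradiction. Suppose no ball $B_{\delta r}(y)\subseteq B_{r/2}(x)$ is contained in $E$. Then every point $y\in E\cap B_{r/4}(x)$ has $\mbox{dist}(y,\partial E)<\delta r$, once we use the Lebesgue representative with $\partial E$ equal to the topological boundary of the measure-theoretic normalization. Cover $B_{r/4}(x)$ by a family of disjoint cubes $Q_j$ of side $\delta r$, with about $\delta^{-n}$ of them. If a cube meets $E$ (in positive measure), then $B_{2\sqrt n\,\delta r}$ around its center contains a point $z_j\in\partial E$. The density estimate at $z_j$ with radius $\delta r$ gives $|E^c\cap B_{\delta r}(z_j)|\geq c_0(\delta r)^n$. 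Each such ball lies in a dilate $C\,Q_j$, and these dilates have bounded overlap depending only on $n$. Summing over the cubes meeting $E$ yields
\begin{equation*}
|E^c\cap B_{r/2}(x)|\geq \tilde c\,c_0\,\#\{j: |Q_j\cap E|>0\}\,(\delta r)^n\geq \tilde c\,c_0\,\delta^{-n}\,\delta^n r^{n}\,\frac{|E\cap B_{r/4}(x)|}{|B_{r/4}|}.
\end{equation*}
This bound alone is not contradictory. To sharpen it, we compare $|E\cap C Q_j|$ with $|E^c\cap CQ_j|$ in each cube.

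The main obstacle is turning these local estimates into a contradiction. We use a relative isoperimetric/perimeter argument. In each dilated cube $CQ_j$ meeting both $E$ and $E^c$ in proportion at least $c_0$, the classical De Giorgi perimeter, or alternatively the $s$-perimeter contribution $\mathcal{L}_s(E\cap CQ_j, E^c\cap CQ_j)$, is at least $c(\delta r)^{n-s}$ by the fractional relative isoperimetric inequality. There are at least $c\,\delta^{-n}$ such cubes, because $|E\cap B_{r/4}(x)|\geq c_0 (r/4)^n$ and the cubes meeting $E$ have total volume at least that much. Summing therefore gives
\begin{equation*}
\Per_s(E,B_{r/2}(x))\geq c\,\delta^{-n}(\delta r)^{n-s}=c\,\delta^{-s}r^{n-s}.
\end{equation*}
On the other hand, almost minimality with the competitor $F=E\setminus B_{r/2}(x)$ gives the upper bound $\Per_s(E,B_{r/2}(x))\leq C r^{n-s}+\Lambda r^{n-s+\gamma}\leq C' r^{n-s}$. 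Choosing $\delta$ small enough contradicts these two bounds. Setting $c=\delta$ gives $y_1$, and the same argument with the roles of $E$ and $E^c$ exchanged gives $y_2$. The delicate point is checking that the fractional isoperimetric lower bound in each cube really adds up, since $\mathcal{L}_s$ over disjoint cubes is superadditive. It is also essential that the perimeter upper bound comes from the competitor estimate, which is where almost minimality enters beyond the density estimates.
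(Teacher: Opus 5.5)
Your argument is correct and is essentially the standard proof the paper defers to (Corollary 2.4 of \cite{MR5015001}, modelled on the clean ball corollary in \cite{MR2675483}). The ingredients are the same: the uniform density estimates at a boundary point near each cube give $\mathcal{L}_s(E\cap CQ_j,E^c\cap CQ_j)\geq c(\delta r)^{n-s}$, the competitor $E\setminus B_{r/2}(x)$ gives $\Per_s(E,B_{r/2}(x))\leq Cr^{n-s}$, and counting cubes then forces a ball of radius $\delta r$ inside $E$ (the standard write-up bounds the number of boundary cubes by $C\delta^{s-n}$ directly rather than arguing by contradiction, which is equivalent). Your opening volume-only estimate is a dead end that you rightly discard, and the per-cube bound needs no isoperimetric inequality, since $\mathcal{L}_s(A,B)\geq |A|\,|B|\,(\operatorname{diam} CQ_j)^{-n-s}$ for $A,B\subseteq CQ_j$.
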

	
	The arguments to show Proposition~\ref{prop::UDE}, Corollary~\ref{cor::boundary_conv}, and Corollary~\ref{cor::clean_ball} are a variation\footnote{The arguments presented in Section 2 and Appendix B of \cite{MR5015001} carry over mutatis mutandis for all $\gamma\in(0,s)$. Indeed, their proofs rely only on density estimates and not on the specific definition of almost minimality.} of the ones for Theorem~2.2, Corollary~2.3, and Corollary~2.4 in \cite{MR5015001}. For the sake of completeness, we present a proof of Proposition~\ref{prop::UDE} in Appendix~\ref{sec::proof_UDE}. For a detailed discussion on Corollaries~\ref{cor::boundary_conv} and~\ref{cor::clean_ball}, we refer to Section 2 and Appendix B of \cite{MR5015001}. In particular, we point out that the key to show Corollary~\ref{cor::boundary_conv} is that the conclusion of Proposition~\ref{prop::UDE} holds for every set $E_k$ with constants $c_0$ and $r_0$ independent of $k$.
	
	\subsection{Proof of uniform density estimates (Proposition~\ref{prop::UDE})} \label{sec::proof_UDE}
	
	Now, we present a proof of Proposition~\ref{prop::UDE}. The argument relies on the following auxiliary results.
	
	\begin{lemma} \label{lemma::keep_small_density}
		Let $\delta_0>0$.
		Let~$E$ be a~$(\Lambda,\gamma)$-almost minimal set in $B_{1}$, with $\Lambda\leq\delta_0$. If
		\begin{equation} \label{eq::small_density}
			a(1)\leq \delta_0, 
		\end{equation}
		then $a(1/2)\leq \delta_0$. 
	\end{lemma}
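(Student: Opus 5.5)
Throughout I read $a(r)$ as the normalized volume of the smaller phase, $a(r):=r^{-n}|E\cap B_r|$; the argument for $E^c$ is symmetric. I also read the lemma as holding for $\delta_0>0$ small enough, depending only on $n$ and $s$, since the estimate below needs this. The plan is the classical Caffarelli--Roquejoffre--Savin argument: remove $E$ from a ball, compare through almost minimality, and close with the fractional isoperimetric inequality. Set $V(r):=|E\cap B_r|$ for $r\in[1/2,1]$.

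\emph{Step 1: energy inequality from almost minimality.} Fix $r\in(1/2,1)$ and use the competitor $F:=E\setminus B_r$, so $F\Delta E\subseteq B_r\Subset B_1$. Applying \eqref{eq::almost_min} and cancelling the common interactions gives
\begin{equation*}
\mathcal{L}_s(E\cap B_r,E^c)\leq \mathcal{L}_s(E\cap B_r,E\setminus B_r)+\Lambda r^{n-s+\gamma}.
\end{equation*}
Add $\mathcal{L}_s(E\cap B_r,E\setminus B_r)$ to both sides. The left side becomes $\mathcal{L}_s(A,A^c)$ with $A=E\cap B_r$, which by the fractional isoperimetric inequality is at least $c\,V(r)^{(n-s)/n}$. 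Hence
\begin{equation*}
c\,V(r)^{\frac{n-s}{n}}\leq 2\,\mathcal{L}_s(E\cap B_r,B_r^c)+\Lambda .
\end{equation*}

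\emph{Step 2: control the boundary interaction.} Each $x\in B_r$ satisfies $\int_{B_r^c}|x-y|^{-n-s}dy\leq C(r-|x|)^{-s}$. Therefore
\begin{equation*}
\mathcal{L}_s(E\cap B_r,B_r^c)\leq C\int_0^r V'(\rho)(r-\rho)^{-s}\,d\rho .
\end{equation*}
Integrate the Step 1 inequality in $r$ over $[1/2,1]$ and use Fubini. The resulting inner integral $\int_\rho^1 (r-\rho)^{-s}dr$ is bounded because $s<1$. This yields
\begin{equation*}
\tfrac12\, c\, V(1/2)^{\frac{n-s}{n}}\leq \int_{1/2}^1 c\,V(r)^{\frac{n-s}{n}}dr\leq C\,V(1)+\Lambda\leq C\delta_0 ,
\end{equation*}
using monotonicity of $V$, the hypothesis $a(1)\leq\delta_0$, and $\Lambda\leq\delta_0$.

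\emph{Step 3: conclusion and the main difficulty.} Step 2 gives $a(1/2)=2^nV(1/2)\leq 2^n(C\delta_0)^{n/(n-s)}$. Because the exponent satisfies $n/(n-s)>1$, the superlinear gain makes this at most $\delta_0$ once $\delta_0$ is small in terms of $n$ and $s$.

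The delicate step is Step 2. The $s$-interaction with $B_r^c$ is not a surface term, so it can only be bounded after averaging in $r$. This is exactly where integrability of $(r-\rho)^{-s}$ is needed. The factor $r^{\gamma}$ in the remainder is harmless here, since $r\leq1$.
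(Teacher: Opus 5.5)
Your proposal is correct and follows essentially the same route as the paper. Both use the competitor $E\setminus B_r$, the fractional isoperimetric (Sobolev) inequality, the co-area bound $\mathcal{L}_s(E\cap B_r,B_r^c)\leq C\int_0^r V'(\rho)(r-\rho)^{-s}\,d\rho$, averaging over $r\in(1/2,1)$ with Fubini, and the superlinear exponent $\frac{n}{n-s}$, with $\delta_0$ taken small depending on $n$ and $s$, exactly as the paper's proof also requires.
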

	
	\begin{proof}
		Let $r\in\left(\frac{1}{2},1\right)$, and define $A_r:=E\cap B_r$. Also, let~$\mu(r):=|A_r|$ and notice that, by the co-area formula, $\mu'(r)=\haus{n-1}(E\cap\partial B_r)$.
		
		Now, observe that, by Sobolev embeddings, we have that  
		\begin{equation} \label{eq::ude1}
			\norm{\chi_{A_r}}_{L^{\frac{n}{n-s}}(\R^n)} = |A_r|^{1-\frac{s}{n}}\leq C[\chi_{A_r}]_{H^{s/2}} = C \mathcal{L}_s(A_r,A_r^c),
		\end{equation}
		for some constant~$C>0$ depending only on~$n$ and ~$s$.
		
		Since
		\begin{equation*}
			\mathcal{L}_s(A_r,A_r^c) = \mathcal{L}_s(A_r,E\cap A_r^c)+\mathcal{L}_s(A_r,E^c),
		\end{equation*}
		it follows from the almost minimality of $E$ that
		\begin{equation} \label{eq::ude2}
			\begin{split}
				&\mathcal{L}_s(A_r,A_r^c)\leq 2\mathcal{L}_s(A_r,E\cap A_r^c) + \Lambda r^{n-s+\gamma}\\
				&\qquad\leq 2\mathcal{L}_s(A_r,B_r^c) + \Lambda r^{n-s+\gamma}.
			\end{split}
		\end{equation}
		From~\eqref{eq::ude1} and~\eqref{eq::ude2} together, we deduce that
		\begin{equation} \label{eq::ude3}
			\mu(r)^{1-\frac{s}{n}} \leq C\mathcal{L}_s(A_r,B_r^c) + C\Lambda r^{n-s+\gamma}.
		\end{equation}
		and we aim to estimate the first term on the right-hand side.
		
		To do so, observe that, by the co-area formula, we have 
		\begin{equation} \label{eq::ude4}
			\begin{split}
				&	\mathcal{L}_s(A_r,B_r^c)
				= \iint_{A_r\times B_r^c} \frac{dx\,dy}{|x-y|^{n+s}}
				\leq C\int_{A_r}\left(\int_{r-|x|}^{+\infty}\frac{dz}{z^{s+1}}\right)\,dx \\
				&\qquad\quad\leq C\int_{A_r} \frac{dx}{(r-|x|)^{s}}
				\leq C\int_0^r\frac{\mu'(\rho)}{(r-\rho)^{s}}\, d\rho ,
			\end{split}
		\end{equation}
		for some~$C>0$, depending on~$n$ and~$s$, and possibly changing from line to line.
		
		Now, plugging \eqref{eq::ude4} into \eqref{eq::ude3}, we find
		\begin{equation*}  
			\mu(r)^{1-\frac{s}{n}} \leq C\int_0^r\frac{\mu'(\rho)}{(r-\rho)^{s}}\, d\rho + C\Lambda r^{n-s+\gamma}.
		\end{equation*}
		Integrating la last inequality over $r\in\left(\frac{1}{2},1\right)$, we obtain
		\begin{equation} \label{eq::ude6}
			\begin{split}
				&a(1/2)^{1-\frac{s}{n}} \leq C\mu(1/2)^{1-\frac{s}{n}} \leq C\int_{1/2}^{1} \mu(r)^{1-\frac{s}{n}}\, dr\\
				&\qquad\leq C\int_{1/2}^{1} \left( \int_0^r\frac{\mu'(\rho)}{(r-\rho)^{s}}\, d\rho + \Lambda r^{n-s+\gamma} \right) dr \\
				&\qquad\leq C\int_{0}^{1/2}\int_{1/2}^1\frac{\mu'(\rho)}{(r-\rho)^{s}}\, dr\,d\rho + C\int_{1/2}^{1}\int_\rho^1\frac{\mu'(\rho)}{(r-\rho)^{s}}\, dr\,d\rho +C\Lambda \int_{1/2}^{1} r^{n-s+\gamma}\,dr \\
				&\qquad \leq C\int_{0}^{1/2}\mu'(\rho)(1-\rho)^{1-s}\, d\rho + C\int_{1/2}^{1}\mu'(\rho)(1-\rho)^{1-s}\, d\rho +C\Lambda\\
				&\qquad \leq C\mu(1) +C\Lambda \leq Ca(1) +C\Lambda,				
			\end{split} 
		\end{equation}
		up to renaming $C$.
		
		Using~\eqref{eq::small_density} and that $\Lambda \leq \delta_0$ in~\eqref{eq::ude6}, we conclude that
		\begin{equation*} 
			a(1/2) \leq \left(Ca(1) +C\Lambda\right)^{\frac{n}{n-s}} \leq (C\delta_0^{s/n})^{\frac{n}{n-s}} \delta_0 \leq \delta_0
		\end{equation*}
		whenever $\delta_0$ is small enough, showing the result.
	\end{proof}
	
	The other partial result needed for the proof of Proposition~\ref{prop::UDE} is a clean-out lemma.
	
	\begin{lemma} \label{lemma::empty_ball}
		Let $\delta_0$ be as in Lemma~\ref{lemma::keep_small_density}. Let~$E$ be a~$(\Lambda,\gamma)$-almost minimal set in $B_{1}$, with $\Lambda\leq\delta_0$. 
		Moreover, suppose that
		\begin{equation} \label{eq::small_density2}
			a(1)<\frac{\delta_0}{\omega_n2^n}.
		\end{equation}
		Then, $|E\cap B_{1/2}|=0$.
	\end{lemma}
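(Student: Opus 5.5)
We would prove Lemma~\ref{lemma::empty_ball} by iterating Lemma~\ref{lemma::keep_small_density} at all dyadic scales around every point of $B_{1/2}$, and then concluding with the Lebesgue differentiation theorem. The notation $a(r)$, left implicit in the previous lemma, is read as the scale-invariant density $a(r)=r^{-n}|E\cap B_r|$ (with the centre understood), consistent with the estimate $a(1/2)\leq C\mu(1/2)$ used there.

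\emph{Step 1: recentring.} Fix $x\in B_{1/2}$. Then $B_{1/2}(x)\subseteq B_1$, so
\[
\frac{|E\cap B_{1/2}(x)|}{(1/2)^n}\leq 2^n|E\cap B_1| < \frac{\delta_0}{\omega_n}.
\]
If $a$ is normalised with $\omega_n$, this gives exactly density at most $\delta_0$ at scale $1/2$ around $x$. This is precisely why the constant $\omega_n 2^n$ appears in \eqref{eq::small_density2}, and the exact normalisation only moves a harmless factor into that constant.

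\emph{Step 2: rescaling and iteration.} Consider the rescaled set $E_x:=2(E-x)$. It is almost minimal in $B_1$, and its constant is $\Lambda 2^{-\gamma}\leq\Lambda\leq\delta_0$, because
\[
\Per_s(\lambda E,\lambda\Omega)=\lambda^{n-s}\Per_s(E,\Omega)
\qquad\text{and}\qquad
\lambda^{n-s}\Lambda r^{n-s+\gamma}=\lambda^{-\gamma}\Lambda(\lambda r)^{n-s+\gamma}\quad\text{with }\lambda=2.
\]
It also has density at most $\delta_0$ in $B_1$. Lemma~\ref{lemma::keep_small_density} then gives density at most $\delta_0$ in $B_{1/2}$. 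Rescaling again by $2$ keeps both the almost minimality constant below $\delta_0$ (it only shrinks, since $\gamma>0$) and the density hypothesis. By induction, $|E\cap B_{2^{-k}}(x)|\leq \delta_0 2^{-kn}$, up to normalisation, for every $k\geq1$. The constant $\delta_0$ is the one from the previous lemma, independent of the scale. One must only check that each rescaled set is almost minimal in $B_1$, which holds because $B_{2^{-k}}(x)\subseteq B_1$.

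\emph{Step 3: upgrading the bound.} Uniform smallness of the density does not by itself force measure zero. To obtain measure zero we need the density to decay to $0$, and we would extract this from the quantitative inequality \eqref{eq::ude6}. Applied at each scale, it gives
\[
a_{k+1}\leq \big(Ca_k+C\Lambda 2^{-k\gamma}\big)^{\frac{n}{n-s}}.
\]
Since $a_k\leq\delta_0$ is small and the exponent $\frac{n}{n-s}$ is larger than $1$, this recursion yields $a_k\to0$. The point is that $Ca_k^{n/(n-s)}\leq C\delta_0^{s/(n-s)}a_k\leq a_k/2$, while the $\Lambda$-term decays geometrically; if necessary we shrink $\delta_0$ once more, depending only on $n,s,\gamma$.

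\emph{Step 4: conclusion.} By Step 3, every $x\in B_{1/2}$ has $E$-density $0$. By Lebesgue's differentiation theorem, almost every point of $E$ has density $1$, so $|E\cap B_{1/2}|=0$.

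The main obstacle is Step 3. It requires reopening the proof of Lemma~\ref{lemma::keep_small_density} to get the strict decay rather than mere preservation of the bound, and it requires tracking the $\Lambda 2^{-k\gamma}$ term. Handling that term is exactly where $\gamma>0$ is used.
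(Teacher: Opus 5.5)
Your argument is correct and is essentially the paper's: recentre at a point of $B_{1/2}$ (the paper picks a Lebesgue point of $E$ there and argues by contradiction), rescale dyadically so that the almost-minimality constant $2^{-k\gamma}\Lambda$ stays below $\delta_0$, iterate Lemma~\ref{lemma::keep_small_density}, and conclude with Lebesgue's density theorem. Your Step~3 is superfluous, however, and its motivation is mistaken: since $\delta_0<1$, the uniform bound $a(2^{-k})\leq\delta_0$ from Step~2 already contradicts density $1$ at any Lebesgue point of $E\cap B_{1/2}$, which is exactly how the paper concludes, so this lemma needs no decay of $a(2^{-k})$, no further shrinking of $\delta_0$, and no use of $\gamma>0$.
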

	
	\begin{proof}
		Arguing by contradiction, suppose that $|E\cap B_{1/2}|>0$. Therefore, there exists a Lebesgue point $x_0\in E\cap B_1$ such that 
		\begin{equation*}
			\lim_{\rho\to0} a(\rho) = 1.
		\end{equation*}
		
		On the other hand, by~\eqref{eq::small_density2}, we also have that 
		\begin{equation} \label{eq::recentered_ball_density}
			\frac{|E\cap B_{1/2}(x_0)|}{|B_{1/2}|} \leq 2^n|E\cap B_{1}| \leq \delta_0.
		\end{equation}
		
		Without loss of generality, we assume that $x_0=0$, so that~\eqref{eq::recentered_ball_density} reads $a(1/2)\leq \delta_0$. Now, thanks to Lemma~\ref{lemma::keep_small_density}, we also have $a(1/4)\leq \delta_0$.
		Applying inductively Lemma~\ref{lemma::keep_small_density}, we infer that, for every $k\in\N$,
		\begin{equation*}
			a(2^{-k-1}) \leq \left(Ca(2^{-k})+C\Lambda\right)^{\frac{n}{n-s}} \leq \left( C\delta_0^{s/n}\right)^{\frac{n}{n-s}}\delta_0 \leq \delta_0.
		\end{equation*}
		
		Therefore,
		\begin{equation*}
			1 = \lim_{k\to+\infty} a(2^{-k-1}) \leq \delta_0,
		\end{equation*}
		which is a contradiction and concludes the proof.
	\end{proof}
	
	\begin{proof}[Proof of Proposition~\ref{prop::UDE}]
		Let $r_0>0$ such that $\Lambda r_0^\gamma\leq \delta_0$,~where $\delta_0$ is as in Lemma~\ref{lemma::keep_small_density}. 
		
		Suppose that
		\begin{equation} \label{eq::some_mass}
			|E \cap B_\rho|>0,
		\end{equation}
		and let~ $x_0\in( \partial E)\cap B_\rho$, for some $\rho\in(0,r_0)$. Up to a dilations and translation, we suppose that~$x_0$ coincides with the origin. 
		
		Let us recall that, by definition, if~$E$ is a~$(\Lambda,\gamma)$-almost minimal set in $B_1$, then
		\begin{equation*} \label{eq::almost_min2}
			\Per_s(E, B_\rho) - \Per_s(F,B_\rho) = \Per_s(E, B_1) - \Per_s(F,B_1) \leq \Lambda \rho^{n-s+\gamma},
		\end{equation*}
		for every~$F$ such that~$ F\Delta E \subseteq B_\rho \Subset B_1$.
		Observe that, rescaling by~$\rho$, we obtain that
		\begin{equation} \label{eq::still}
			\Per_s(\rho^{-1}E,B_1) \leq \Per_s(\rho^{-1}F,B_1) + \Lambda\rho^\gamma.       
		\end{equation}
		Thus, without loss of generality, we assume $\rho=1$ and that $E$ is $(\Lambda,\gamma)$-quasi minimal in $B_{1}$, with $\Lambda\leq\delta_0$. Moreover,~\eqref{eq::some_mass} reads
		\begin{equation} \label{eq::some_mass2}
			|E \cap B_1|>0.
		\end{equation}
		\smallskip
		We claim that 
		\begin{equation} \label{eq::ude_claim}
			|E\cap B_1|\geq \delta_0.
		\end{equation}
		
		Arguing by contradiction, suppose that
		\begin{equation*} \label{eq::ude_small}
			|E\cap B_1|\leq \delta_0.
		\end{equation*}
		
		Thus, by mean of Lemma~\ref{lemma::empty_ball}, we infer $|E\cap B_1|=0$. However, this contradicts~\eqref{eq::some_mass2}, proving~\eqref{eq::ude_claim}. Rescaling back by $\rho$, we obtain the first part of~\eqref{eq::unif_dens_estimates}.
		\smallskip
		
		In addition, if~$E$ is a quasi minimal set, then also~$E^c$ is quasi minimal . 
		Thus, the argument above with $E^c$ in place of $E$ leads to the second part of~\eqref{eq::unif_dens_estimates}, concluding the proof.
	\end{proof}
	
	\begin{rem}
		We point out that, if~$\{\Lambda_k\}_k$,~$\{\gamma_k\}_k$, and~$\{E_k\}_k$ are sequences satisfying the assumptions of Corollary~\ref{cor::boundary_conv}, then there exist $\Lambda\geq0$ and~$\gamma\in(0,s]$ such that $\Lambda_k\to\Lambda$ and $\gamma_k\to\gamma$, as $k\to+\infty$. 
		In particular, there exist $\Lambda_0\geq0$ and~$\gamma_0\in[0,s)$ such that $\Lambda_k\leq\Lambda_0$ and $\gamma_k\leq\gamma_0$, for every $k$ large enough. Therefore, the conclusion of Proposition~\ref{prop::UDE} holds for every set $E_k$ with constants $c_0$ and $r_0$ depending only on~$n$,~$s$,~$\Lambda_0$, and~$\gamma_0$. In particular, $c_0$ and $r_0$ are independent of $k$.
	\end{rem}
	
	
	\section{Improvement of flatness}\label{sec::improvement_of_flatness}
	
	This section is devoted to implement a flatness improvement technique useful to deduce Theorem~\ref{th::holder_reg_almost_min}.
	To this purpose, we now introduce the notion of flatness for a set. For later convenience, we define, for any $x_0\in\R^n$, $\nu\in \S^{n-1}$, and $r\in\R$, the cylinder
	\begin{equation*}
		C_r^\nu(x_0) := \{(x',x_\nu)\in\R^n \mbox{ s.t. } |(x'-x_0')|<r\} = B_r^{n-1}(x_0)\times\nu\R.
	\end{equation*}
	When $\nu=e_n$, we will denote the vertical cylinder of radius $r$ centered at $x_0$ simply with $C_r(x_0)$.
	Also, when the cylinders are centered at the origin, we use the short notations~$C_r^\nu:=C_r^\nu(0)$
	and~$C_r:=C_r(0)$.
	
	\begin{definition}[{\cite[Definition~2.13]{MR5015001}}] \label{def::flatness}
		Let~$h$, $r>0$ and~$x_0\in\partial E$. We say that a set~$E$ is~$\left(\frac{h}{r}\right)$-flat in $C_r^\nu(x_0)$ if 
		$$ \partial E\cap C_r^\nu(x_0)\subseteq\big\{|(x_n-x_0)\cdot\nu|<h\big\}.$$
	\end{definition}
	In other words, we say that a set is~$\left(\frac{h}{r}\right)$-flat in the cylinder $C_r^\nu(x_0)$ if it can be trapped in a slab of width~$2h$ in direction $\nu$. 
	\medskip
	
	The idea of the improvement of flatness technique is to show that if an almost minimal set has controlled flatness in each ball of radius $2^{-k}$, for all $k=0,\dots,k_0$, then its flatness improves in every dyadic ball with smaller radius.
	
	\begin{theorem}[Improvement of flatness] \label{th::flatness_improvement}	
		Let~$\alpha>0$. 
		Let~$E$ be~a~$(\Lambda,\gamma)$-almost minimizer for~$\Per_s$ in~$B_{1}$, and let~$x_0\in\partial E\cap B_{1/2}$. 
		
		Then, there exist~$k_0\in\N$ depending on $n$,~$s$,~$\Lambda$,~$\gamma$, and~$\alpha$,
		such that, if
		\begin{equation}  \label{eq::tail_control}
			\partial E\cap C^{\nu_j}_{2^{-j}}\subseteq\big\{|x\cdot\nu_j|\leq 2^{-j(1+\alpha)}\big\},\qquad {\mbox{for all }} j=0,\dots,k \mbox{, for some }k\geq k_0,
		\end{equation}
		where~$\{\nu_j\}_{j=0}^k$ is a family of unit vectors with~$\nu_0=e_n$, then there exist unit vectors~$\{\nu_j\}_{j\geq k+1}$ such that 
		\begin{equation*}
			\partial E\cap C^{\nu_j}_{2^{-j}}\subseteq\big\{|x\cdot\nu_j|\leq2^{-j(1+\alpha)}\big\},\qquad {\mbox{for all }} j\geq k+1.
		\end{equation*}
	\end{theorem}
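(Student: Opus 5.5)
It suffices to prove the single-step improvement: if \eqref{eq::tail_control} holds for $j=0,\dots,k$ with $k\geq k_0$, then there is a unit vector $\nu_{k+1}$ such that
\[
\partial E\cap C^{\nu_{k+1}}_{2^{-(k+1)}}\subseteq\big\{|x\cdot\nu_{k+1}|\leq 2^{-(k+1)(1+\alpha)}\big\}.
\]
The full statement then follows by induction on $k$, since the hypothesis at step $k+1$ is exactly the conclusion together with the old hypotheses.

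To prove the single step, we rescale by $2^k$. Set $E_k:=2^kE$, with $x_0$ translated to the origin, and $\epsilon_k:=2^{-k\alpha}$. By scaling of $\Per_s$ (as in \eqref{eq::still}), the set $E_k$ is $(\Lambda 2^{-k\gamma},\gamma)$-almost minimal in $B_{2^k}$. The hypotheses say that $E_k$ is $\epsilon_k$-flat in $C_1^{\nu_k}$. They also say that, at every dyadic scale $2^{j}$ with $1\leq 2^j\leq 2^k$, $\partial E_k$ is trapped in a slab of width $2^{j}\cdot 2^{j\alpha}\epsilon_k$ around a hyperplane with normal $\nu_{k-j}$. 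Since consecutive slabs overlap, consecutive normals differ by at most $C2^{-(k-j)\alpha}$. So the boundary stays in $\{|x\cdot\nu_k|\leq C\epsilon_k|x|^{1+\alpha}\}$ for $1\leq|x|\leq 2^k$, which gives polynomial control of the tails.

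We argue by contradiction and compactness. Suppose there are sequences $E_k$ satisfying the above with $\epsilon_k\to0$ and $\Lambda_k:=\Lambda2^{-k\gamma}\to0$, but failing the improvement. Rotate so that $\nu_k=e_n$, and consider the vertically rescaled boundaries $\{(x',x_n/\epsilon_k):x\in\partial E_k\}$. These are contained in $\{|x_n|\leq C(1+|x'|)^{1+\alpha}\}$, and we want them to converge in the Hausdorff sense to the graph of a function $w$ with $|w(x')|\leq C(1+|x'|^{1+\alpha})$.

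Compactness requires a uniform Harnack-type oscillation decay: if $\partial E_k\cap C_1\subseteq\{|x_n|\leq \epsilon\}$ and the tails are controlled as above, then in a smaller cylinder the boundary lies in a slab of width $(1-\delta)\epsilon$ in a suitably placed position, at all scales down to about $\epsilon^{1/\cdots}$. This is the main obstacle. In \cite{MR5015001} it was obtained from the viscosity Euler--Lagrange equation, which is unavailable here. I would instead use Theorem~\ref{th::EL}, the microscopic Euler--Lagrange inequalities. At a touching point of $\partial E_k$ with a translated paraboloid/cone barrier, the inequality bounds the $s$-mean curvature integral outside a ball of radius $\rho$ by $C\Lambda_k\rho^{\gamma-s}$. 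Choosing $\rho$ as a small power of $\epsilon_k$, so that $\Lambda_k\rho^{\gamma-s}\ll\epsilon_k$, which is possible because $\Lambda_k\lesssim\epsilon_k^{\gamma/\alpha}$ and we may take $\alpha$ small relative to $\gamma$, the near-field contribution is harmless by the uniform density estimates (Proposition~\ref{prop::UDE}) and the clean ball condition (Corollary~\ref{cor::clean_ball}). The far field is handled by the tail bound, since $\int_{|y|>1}|y|^{1+\alpha}|y|^{-n-s-1}\,dy<\infty$ as long as $\alpha<s$. The barrier comparison then yields the one-step Harnack estimate, and iterating it gives equicontinuity of the rescaled boundaries at scales above a vanishing threshold. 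The limit $w$ is therefore H\"older continuous, and the convergence is locally uniform.

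Next, we pass to the limit in the same microscopic Euler--Lagrange inequalities, tested by smooth functions touching $w$ from above or below. This shows that $w$ is a viscosity solution of the linearized equation $(-\Delta)^{\frac{1+s}{2}}w=0$ in $\R^{n-1}$, because the error terms vanish since $\Lambda_k\rho^{\gamma-s}/\epsilon_k\to0$. By the growth bound $|w|\leq C(1+|x'|^{1+\alpha})$ with $1+\alpha<1+s$, the Liouville theorem for the fractional Laplacian of order $1+s$ makes $w$ affine. Consequently, for $k$ large, $\partial E_k\cap C_{1/2}$ lies within $\epsilon_k 2^{-(1+\alpha)}/2$ of a tilted hyperplane. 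This gives the desired $\nu_{k+1}$, with tilt $O(\epsilon_k)$, and contradicts the assumed failure. This fixes $k_0$ in terms of $n,s,\Lambda,\gamma,\alpha$.
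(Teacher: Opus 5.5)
Your outline follows the paper's route: reduce to the one-step rescaled statement (Proposition~\ref{prop::improv_flat_rescaled}), argue by compactness and contradiction, obtain Harnack/H\"older compactness from the microscopic Euler--Lagrange inequality, show the limit solves $(-\Delta)^{\frac{1+s}{2}}w=0$ in the viscosity sense, and conclude with Liouville under growth $1+\alpha<1+s$. The gap is in the one quantitative input everything rests on. You credit Theorem~\ref{th::EL} with bounding the curvature truncated outside a ball of \emph{freely chosen} radius $\rho$ by $C\Lambda_k\rho^{\gamma-s}$. That is not what the theorem says, and the argument behind it cannot give it.

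What is actually proved is the following: if a ball of radius $2r$ touches $E$ from below at $x_0$, then $H_s^{r^{1+\epsilon/2}}[E](x_0)\le c\,r^{-s+\epsilon\frac{1-s}{2}}$, with $\epsilon=\frac{2\gamma}{2n+1+s-2\gamma}$. The truncation radius is locked to the barrier radius, and the exponent carries a loss, for two reasons.
\begin{itemize}
\item Sliding $B_r$ upward by $d=r^{1+2\epsilon}$ sweeps a set contained in a ball of radius $\sim r^{1+\epsilon}$, so almost-minimality costs $\Lambda r^{(1+\epsilon)(n-s+\gamma)}$. But the volume of that set is only known to be $\gtrsim d^n$, via Proposition~\ref{prop::UDE}. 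Your volume-over-cost heuristic ignores exactly this mismatch.
\item The resulting bound holds for $E\cup A_{t_r}$ at an unknown point $x_r$ with $|x_r|\le r^{1+\epsilon}$. One must therefore truncate at $\xi=r^{1+\epsilon/2}\gg|x_r|$. This costs $r^{-1}\xi^{1-s}$ for the barrier near field and $r^{\epsilon/2}\xi^{-s}$ for moving from $x_r$ to $x_0$.
\end{itemize}
Since the theorem needs a touching ball, cone barriers are also not admissible.

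Consequently, $\rho$ is not yours to choose in the Harnack and linearization steps. Take a paraboloid of opening $\sim a=2^{-k\alpha}$ touching $E$, which is almost minimal in $B_{2^k}$. One rescales to $2^{-k}E$ and takes $r=2^{-k(1-\alpha)}$. This forces $\rho=2^{-\frac{k}{2}(\epsilon-\epsilon\alpha-2\alpha)}$, with $H^{\rho}_s[E](y)\le c\,2^{-\frac{k}{2}(\epsilon(1-s)-\epsilon\alpha(1-s)+2\alpha s)}$. This bound is $o(a)$ if and only if $\alpha<\frac{\epsilon}{2+\epsilon}=\frac{\gamma}{2n+1+s-\gamma}$. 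That is a much stronger restriction than the $\alpha<\gamma$ your estimate would allow, and it is the one both the Harnack step and the passage to the linearized equation genuinely need.

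In the Harnack step, the annulus $B_{1/2}(y)\setminus B_\rho(y)$ is also not handled by density estimates or clean balls. It is bounded below by comparison with the paraboloid subgraph $P$ (contributing at least $-Ca$). To this one adds the positive mass of $E\setminus P$, of size $\gtrsim a\delta_0^{-1-s}$, supplied by the half-measure hypothesis $|E\cap C_{\delta_0,a}|\ge\frac12|C_{\delta_0,a}|$. This positive term is what contradicts the $o(a)$ upper bound. With Theorem~\ref{th::EL} used in its actual form, your plan coincides with the paper's proof.
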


	Theorem~\ref{th::flatness_improvement} follows easily by rescaling from the following:
	
	\begin{prop}[Rescaled improvement of flatness] \label{prop::improv_flat_rescaled}
		Let $\Lambda\geq0$ and~$\gamma\in(0,s]$, and let~$\alpha>0$. 
		
		Then, there exists~$k_0\in\N$, depending only on~$n$,~$s$,~$\Lambda$,~$\gamma$, and~$\alpha$, such that the following holds.
		Let $k\geq k_0$ and let~$E$ be~$(2^{-k\gamma}\Lambda,\gamma)$-almost minimal in~$B_{2^k}$ with~$0\in\partial E$. Assume that
		\begin{equation} \label{eq::rescaled_flat_improve_hp}
			\partial E\cap C_{2^j}\subseteq \big\{|x\cdot\nu_j|\leq 2^{-k\alpha}2^{j(1+\alpha)}\big\},\qquad {\mbox{for all }} j=0,\dots,k.
		\end{equation}
		
		Then there exists a unit normal~$\nu_{-1}$ such that 
		\begin{equation} \label{eq::flatness_1/2}
			\partial E\cap C_{1/2}\subseteq \big\{|x\cdot\nu_{-1}|\leq 2^{-k\alpha-\alpha-1}\big\}.
		\end{equation}
	\end{prop}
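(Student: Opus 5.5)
I would argue by contradiction and compactness, following the classical scheme of Caffarelli--Roquejoffre--Savin (cf.\ \cite[Theorem 6.1]{MR2675483}) and its adaptation in \cite{MR5015001}. Suppose the statement fails for some fixed $\alpha$. Then there are sequences $k_m\to+\infty$ and sets $E_m$ that are $(2^{-k_m\gamma}\Lambda,\gamma)$-almost minimal in $B_{2^{k_m}}$, with $0\in\partial E_m$, which satisfy \eqref{eq::rescaled_flat_improve_hp} but for which \eqref{eq::flatness_1/2} fails for every unit vector. Set $a_m:=2^{-k_m\alpha}\to0$. Up to a rotation we may take $\nu_0=e_n$. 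The hypothesis gives $\partial E_m\cap C_1\subseteq\{|x_n|\le a_m\}$, and at scale $2^j$ the flatness is $a_m2^{j(1+\alpha)}$. Hence the boundary stays in a slab that opens at most like $|x'|^{1+\alpha}$ at large scales, and the normals satisfy $|\nu_j-\nu_{j+1}|\lesssim a_m2^{j\alpha}$.

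Next I would consider the vertically rescaled boundaries
\[
A_m:=\{(x',x_n/a_m):x\in\partial E_m\}.
\]
The goal is to show that, after a subsequence, $A_m$ converges in Hausdorff distance on compact sets to the graph of a function $w$ with $|w(x')|\le C(1+|x'|^{1+\alpha})$. This needs a uniform Harnack-type inequality (a ``partial Harnack''): if $\partial E\cap C_r$ lies in a slab of width $a r$ with $a$ small, then in $C_{r/2}$ it lies in a slab of width $(1-\theta)ar$. Iterating this down to scales of order $a_m$ gives equicontinuity of $A_m$ away from scale $a_m$, and hence a H\"older limit.

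The Harnack step is where the present setting departs from \cite{MR5015001}, and it is the main obstacle. Almost minimizers do not satisfy $H_s[E]=0$ in the viscosity sense, so the usual argument, which touches from below by a translated and perturbed paraboloid and contradicts the Euler--Lagrange equation at the contact point, cannot be used as it stands. In its place I would use a weak Euler--Lagrange inequality at small scales, namely the result announced as Theorem~\ref{th::EL}. At a contact point with a smooth barrier $\phi$ touching $E$ from inside at scale $\rho$, I would apply the almost subsolution property \eqref{eq::sub_sol_prop} with $A$ equal to the thin region between $E$ and a slightly pushed barrier inside $B_\rho$. Dividing by $|A|$ gives
\[
H_s[\phi]\ \ge\ -C\Lambda_m\rho^{\gamma-s}.
\]
Here $\Lambda_m=2^{-k_m\gamma}\Lambda$, so the error is $\Lambda\,2^{-k_m\gamma}\rho^{\gamma-s}$. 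The exponent $\gamma<s$ makes this blow up as $\rho\to0$, so it can only be used at mesoscopic scales $\rho\gtrsim a_m^{\beta}$, for a suitable $\beta$. To prove the $\theta$-improvement, I would choose the barrier so that its mean curvature gain is of order $a$ at scale $\rho\sim1$. The requirement is that $2^{-k_m\gamma}\ll a_m=2^{-k_m\alpha}$, which holds when $\alpha<\gamma$ (compatible with $\epsilon_0\le\gamma$). The tails are controlled via the growth $2^{j(1+\alpha)}$, and $\alpha<s$ makes the tail sum converge. The clean ball condition of Corollary~\ref{cor::clean_ball} and the uniform density estimates of Proposition~\ref{prop::UDE} ensure that the contact happens on a set of positive measure, so that $A$ is nontrivial.

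Finally I would show that $w$ is a viscosity solution of the linearized equation $(-\Delta)^{\frac{1+s}{2}}w=0$ in $\R^{n-1}$. I would do this by touching $w$ with test functions and passing the weak Euler--Lagrange inequality to the limit. The error term disappears because $\Lambda_m a_m^{-1}\to0$, as in the previous paragraph. The growth $|w|\le C(1+|x'|^{1+\alpha})$ with $1+\alpha<1+s$ is admissible, so the Liouville theorem forces $w$ to be affine, and $w(0)=0$. The interior $\cont^{1,1}$ estimates for $w$ then give $|w(x')-b\cdot x'|\le C|x'|^2$ on $B_{1/2}$. Since $A_m\to\operatorname{graph}w$ uniformly, tilting $e_n$ by $a_mb$ to obtain $\nu_{-1}$ traps $\partial E_m\cap C_{1/2}$ in a slab of width $Ca_m/4+o(a_m)$. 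This is at most $a_m2^{-\alpha-1}$ for $\alpha<1$, which contradicts the assumption that \eqref{eq::flatness_1/2} fails.
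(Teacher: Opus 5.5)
Your overall architecture is the same as the paper's: contradiction and compactness, a Harnack inequality driven by a weak Euler--Lagrange inequality, H\"older compactness of the vertically rescaled boundaries, a $\frac{1+s}{2}$-harmonic limit, and Liouville. The gap is in the one step that carries the new content, namely your derivation of the weak Euler--Lagrange inequality and the range of $\alpha$ it permits.

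You push a barrier slightly, take $A$ to be the region it sweeps out in $E^c$ inside $B_\rho$, and divide $\Lambda_m\rho^{n-s+\gamma}$ by $|A|$ to get an error $\Lambda_m\rho^{\gamma-s}$. That requires $|A|\gtrsim\rho^n$, which is false for a thin bubble. If the barrier, with radius of curvature $R$, is pushed by $d$, it separates from the touching barrier (which lies in $E$) only within horizontal distance $\rho\simeq\sqrt{Rd}\gg d$. The swept region therefore has volume at most $\simeq d\rho^{n-1}\ll\rho^n$, and the density estimates only give $|A|\ge|E^c\cap B_d|\ge c_0d^n$. If instead you make $A$ fat, comparable to $E^c\cap B_\rho$, the first variation is of order $\rho^{n-s}$ and no longer linearizes to a curvature. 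The honest error is $\Lambda_m\rho^{n-s+\gamma}d^{-n}$, which is exactly what Proposition~\ref{prop::EL_partial1} records with $d=r^{1+2\epsilon}$ and $\rho=r^{1+\epsilon}$.

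Also, what you divide by $|A|$ is not $|A|\,H_s[\phi]$. By calibration the first variation equals $\int_A H_s[E\cup A_{t_x}](x)\,dx$. So you only get a bound at some point $x_r$ of the bubble, for the perturbed set $E\cup A_{t_r}$, not at the contact point for $E$. The paper transfers this back by truncating at a third scale $\xi=r^{1+\epsilon/2}\gg\rho$ (Proposition~\ref{prop::EL_partial2} and Theorem~\ref{th::EL}). This costs $r^{-1}\xi^{1-s}$ for the local part plus a shift error, and balancing the losses forces the choice \eqref{eq::epsilon_choice}.

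Because of these losses, your condition $2^{-k_m\gamma}\ll a_m$, i.e.\ $\alpha<\gamma$, does not suffice. This affects both the Harnack step and the linearization, where you claim the error disappears because $\Lambda_m a_m^{-1}\to0$. What must tend to zero is $a_k^{-1}$ times the weak Euler--Lagrange error. After rescaling by $2^{-k}$, the touching paraboloid of opening $\simeq a_k$ becomes a ball of radius $r\simeq2^{-k(1-\alpha)}$, and $a_k^{-1}$ times the error is $2^{-k\frac{1-s}{2}(\epsilon-\epsilon\alpha-2\alpha)}$. This tends to zero only if $\alpha<\frac{\epsilon}{2+\epsilon}$, which for the paper's $\epsilon$ equals $\frac{\gamma}{2n+1+s-\gamma}$, far below $\gamma$.

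To repair the argument, do one of the following:
\begin{itemize}
\item invoke Theorem~\ref{th::EL} as stated, with its truncation radius, run the Harnack lower bound on the truncated integral as in Lemma~\ref{lemma::partial_harnack}, and restrict $\alpha$ as in Theorem~\ref{th::harnack_ineq};
\item or redo the bubble computation with $|A|\gtrsim d^n$, work directly at $x_r$ with the perturbed set, and accept the smaller range of $\alpha$ this produces.
\end{itemize}

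A minor point: once Liouville gives $w$ affine, the $\cont^{1,1}$ estimate and the comparison $Ca_m/4\le a_m2^{-\alpha-1}$ are unnecessary. Uniform convergence to an affine graph already traps $\partial E_m\cap C_{1/2}$ in a tilted slab of width $o(a_m)$.
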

	\medskip
	
	The strategy to prove Proposition~\ref{prop::improv_flat_rescaled} relies on a contradiction argument. Let $\Lambda\geq0$ and~$\gamma\in(0,s]$, and let~$\alpha>0$.  
	
	We consider a sequence of~$(2^{-k\gamma}\Lambda,\gamma)$-almost minimal sets~$\{E_k\}_k$. For every~$k\in\N$, we define~$a_k:=2^{-k\alpha}$, and we assume that there exist~$j_k\in\N$ and a collection of unit vector $\{\nu_j^k\}_{j=0}^{j_k}$ such that
	\begin{equation}\label{eq::flatness_Ek2}
		\partial E_k\cap C_{2^j} \subseteq \big\{|x\cdot\nu_j^k|\leq a_k2^{j(1+\alpha)}\big\},\quad{\mbox{for all }}j\in\{ 0,\dots, j_k\} ,
	\end{equation} 
	with~$\nu_0^k=e_n$ for every~$k$.
	
	By contradiction, suppose that~
	\begin{equation} \label{eq::claim_contradiction}
		\mbox{$\partial E_k$ cannot fit in any cylinder of height~$2^{-(k+1)\alpha}$ in~$C_{1/2}$. }
	\end{equation}
	Our goal is to show that there exists a subsequence~$\{E_{k_l}\}_l$ uniformly convergent to a subgraph of some linear function~$u$, leading to the desired contradiction.
	
	For later convenience, let us denote by~$E_k^*$ the vertical dilation of~$E_k$ of a factor~$a_k$, i.e.
	$$ E_k^*:=\big\{(x',x_n)\;{\mbox{ s.t. }}\; (x',a_kx_n)\in E_k\big\}.$$
	
	
	
	\subsection{Euler-Lagrange inequalities}\label{sec::EL} 
	
	We remark that, as anticipated in the introduction, the main motivation for introducing almost minimizers (even in the classical case) is for being able to deal with object that are not proper minimizers of the energy functional under investigation. Therefore, such almost minimizers do not necessarily satisfy any Euler-Lagrange equation. On the other hand, we show here that
	one is still able to infer some {\em Euler-Lagrange inequalities at smaller scales}.
	
	In our setting, almost minimizers for the $s$-perimeter (in the sense of Definition~\ref{def::almost_min}) do not satisfy the mean curvature equation (compare with \cite[Theorem 5.1]{MR2675483}). However, if $E$ is an almost minimal set with a ball touching from below at some point, then the non-local mean curvature of $E$ is bounded outside a small ball centered at the touching point. The precise statement goes as follows:
	
	\begin{theorem} \label{th::EL}
		Let $E$ be a $(\Lambda,\gamma)$-almost minimal set in $B_1$. Assume that there exists a ball of radius~$2r$ touching $E$ at some $x_0\in\partial E$ from below.
		
		Then, there exists $\epsilon=\epsilon(n,s,\gamma)>0$ such that 
		\begin{equation}\label{cor::EL2}
			H^{r^{1+\epsilon/2}}_s[E](x_0) := \int_{B_{r^{1+\epsilon/2}}^c(x_0)} \frac{\chi_E(y)-\chi_{E^c}(y)}{|x_r-y|^{n+s}}dy \leq cr^{-s+\epsilon\frac{1-s}{2}}, 
		\end{equation}
		for some constant $c>0$ depending only on $n$, $s$,~$\Lambda$, and~$\gamma$.
	\end{theorem}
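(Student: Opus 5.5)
Let $B$ be the ball of radius $2r$ touching $E$ from below at $x_0$, i.e.\ $B\subseteq E$ and $x_0\in\partial B\cap\partial E$. We will test the almost subsolution property of Definition~\ref{def::sub_super_sol}, which holds by the remark following it, against a thin set $A\subseteq E$ removed near $x_0$. Set $\delta:=r^{1+\epsilon/2}$, with $\epsilon>0$ fixed at the end.

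\emph{Step 1: the competitor.} Translate the ball inward to $B'=B+t\nu$, where $\nu$ is the inner normal of $B$ at $x_0$ and $t\ll\delta$. Let
\[
A:=(B\setminus B')\cap B_{\delta}(x_0)\subseteq E\cap B_\delta(x_0).
\]
Then $A$ is a lens of width about $t$ and diameter about $\delta$. Its measure satisfies $|A|\simeq t\delta^{n-1}$, as long as $t\lesssim \delta^2/r$ so that the curvature of the ball does not shrink the lens. The subsolution property applied in $\widetilde B_\delta(x_0)$ gives
\[
\mathcal{L}_s(A,E\setminus A)-\mathcal{L}_s(A,E^c)\ \geq\ -\Lambda\delta^{n-s+\gamma}.
\]

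\emph{Step 2: split the kernel.} For $x\in A$, write
\[
\int\frac{\chi_{E\setminus A}-\chi_{E^c}}{|x-y|^{n+s}}\,dy
\]
as the sum of the contribution of $B_\delta(x_0)^c$ and the contribution of $B_\delta(x_0)$.
\begin{itemize}
\item On $B_\delta(x_0)^c$ we have $|x-y|\approx|x_0-y|$ up to a relative error $O(|x-x_0|/\delta)$, which is not small since $x$ ranges over $B_\delta$. I would therefore shrink the lens to $A\subseteq B_{\delta/2}(x_0)$. The far integral is then comparable, up to a bounded additive error $C\delta^{-s}$, to $H_s^{\delta}[E](x_0)$ evaluated at a nearby point. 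This is the natural meaning of $x_r$ in \eqref{cor::EL2}.
\item On $B_\delta(x_0)$, the set $B\setminus A$ lies inside $E\setminus A$, and $E^c\cap B_\delta$ lies outside $B$. We bound the near part from above as in the standard ball computation in~\cite[Theorem 5.1]{MR2675483}. Points of $A$ sit inside the ball $B$ of radius $2r$, so the near contribution of $\chi_{B}-\chi_{B^c}$ is at most of order $r^{-s}$ plus a term controlled by the distance to $\partial B'$. There is also the $A$–$A$ self-interaction: removing $A$ costs $\mathcal{L}_s(A,A)$-type terms, of size about $|A|\,t^{-s}$.
\end{itemize}
The sign works out as follows. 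Subtracting gives
\[
|A|\bigl(H^\delta_s[E]-C r^{-s}-Ct^{-s}\bigr)\ \leq\ \Lambda\delta^{n-s+\gamma}
\]
after moving terms, up to the sign convention for $H_s$ used in \eqref{cor::EL2}.

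\emph{Step 3: optimize.} Dividing by $|A|\simeq t\delta^{n-1}$ yields
\[
H^\delta_s[E](x_0)\ \lesssim\ r^{-s}+t^{-s}+\Lambda\,\frac{\delta^{1-s+\gamma}}{t}.
\]
Choose $t=\delta^{\theta}$ with $\theta>1$, say $t\approx\delta^2/r=r^{1+\epsilon}$, which is compatible with the lens geometry. Then
\[
\Lambda\frac{\delta^{1-s+\gamma}}{t}=\Lambda\, r^{(1+\epsilon/2)(1-s+\gamma)-1-\epsilon}.
\]
Every term must be at most $r^{-s+\epsilon(1-s)/2}$, and since the target exponent involves $\epsilon(1-s)/2$ I expect the intended balancing to be slightly different. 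The error term requires $\gamma(1+\epsilon/2)-\epsilon(1+s)/2\geq \epsilon(1-s)/2$, i.e.\ $\gamma(1+\epsilon/2)\geq\epsilon$, which holds for $\epsilon\le\gamma$. The real difficulty is the term $t^{-s}$, which is much larger than $r^{-s}$.

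\emph{Main obstacle.} The crude $A$–$A$ self-interaction bound is too lossy. The fix I would try is to avoid removing $A$ outright: compare instead with the flipped configuration, i.e.\ use the difference $\mathcal{L}_s(A,E\setminus A)-\mathcal{L}_s(A,E^c)$ directly. For a lens between two tangent balls, the near-field contribution can be computed against $B'$: points of $A$ lie within $t$ of $\partial B'$, and the symmetric cancellation of $\chi_{B'}-\chi_{B'^c}$ around $x$ gives a bound of order $r^{-s}$ plus $\operatorname{dist}(x,\partial B')^{-s}$ integrated over $A$. That integral is $\lesssim \delta^{n-1}t^{1-s}$ rather than $|A|t^{-s}$, though the two are the same order. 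So it will be essential to gain from the fact that $E\setminus A\supseteq B'$ in the near field. This is where the exponents $\epsilon(1-s)/2$ should emerge, by choosing $t$ as a suitable power between $\delta^2/r$ and $\delta$. I expect this near-field cancellation estimate to be the delicate step.
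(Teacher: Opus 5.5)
\textbf{There is a genuine gap: the variational inequality you use points in the wrong direction.} With $B\subseteq E$ and $A\subseteq B\subseteq E$, the almost subsolution property reads $\int_A\int_{\R^n}\frac{\chi_{E\setminus A}(y)-\chi_{E^c}(y)}{|x-y|^{n+s}}\,dy\,dx\geq-\Lambda\delta^{n-s+\gamma}$. In other words, (far part) $+$ (near part) $\geq-\Lambda\delta^{n-s+\gamma}$.
\begin{itemize}
\item This can only give a \emph{lower} bound on the far part, and only if you have an \emph{upper} bound on the near part.
\item The inclusion $B\setminus A\subseteq E\setminus A$ bounds the near integrand from \emph{below}. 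The interior ball says nothing about where $E^c$ sits, so your claim that the near contribution is ``at most of order $r^{-s}$'' is unjustified. The only upper bound available is the trivial one, $\int_A\mathrm{dist}(x,\partial A)^{-s}dx\sim|A|t^{-s}$.
\end{itemize}
Hence $|A|(H^\delta_s[E]-Cr^{-s}-Ct^{-s})\le\Lambda\delta^{n-s+\gamma}$ does not follow. This is not a matter of sign convention. The sign of $H_s$ is fixed by $\chi_E-\chi_{E^c}$ in \eqref{cor::EL2}, and for an interior tangent ball the upper bound is exactly the non-trivial (viscosity) direction; the lower bound $H_s\geq -Cr^{-s}$ is geometric. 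Modifying $E$ on the side of $\partial E$ where the tangent ball sits can only yield the trivial direction. Your proposed fix, cancellation against $B'\subseteq E\setminus A$, has the same defect.

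Two further losses would remain even with the correct sign:
\begin{itemize}
\item the $t^{-s}\gg r^{-s}$ self-interaction, which you identify but do not resolve;
\item comparing far fields over a lens of diameter $\sim\delta$, whose additive error $C\delta^{-s}=Cr^{-s(1+\epsilon/2)}$ already exceeds the target $r^{-s+\epsilon(1-s)/2}$.
\end{itemize}

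The missing idea is to use the almost \emph{supersolution} property, adding a piece of $E^c$ to $E$. The piece must be arranged so that every added point is a boundary point of an intermediate set with an interior tangent ball. The paper (with $x_0=0$) takes the smaller ball $B_r(-re_n)\subseteq B_{2r}(-2re_n)$ and slides it outward: $A_t:=E^c\cap B_r((t-r)e_n)$ for $t\in[0,d]$, $d=r^{1+2\epsilon}$. Then $A_d\subseteq B_{C\rho}$ with $\rho=r^{1+\epsilon}$, and $\Per_s(E,B_1)-\Per_s(E\cup A_d,B_1)\le\Lambda\rho^{n-s+\gamma}$. The calibration identity writes the left-hand side as $\int_{A_d}H_s[E\cup A_{t_x}](x)\,dx$, which absorbs the self-interaction you could not control.

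Since $|A_d|\geq|E^c\cap B_d|\geq c_0d^n$ by the density estimates, some $x_r\in A_d$ satisfies $H_s[E\cup A_{t_r}](x_r)\le Cr^{-s+\gamma-\epsilon(n+s-\gamma)}$. The ball $B_r((t_r-r)e_n)\subseteq E\cup A_{t_r}$ touches at $x_r$. It gives a \emph{lower} bound $-Cr^{-1}\xi^{1-s}$ on the contribution of $B_\xi$, with $\xi=r^{1+\epsilon/2}$. Subtracting this from the \emph{upper} bound on the total bounds the far part from above, so the two one-sided estimates now combine correctly.

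Finally, the competitor lives at scale $\rho$ while the truncation is at scale $\xi$, with $\rho/\xi=r^{\epsilon/2}$. So passing from $x_r$ to $x_0$ costs only $r^{\epsilon/2}\xi^{-s}=r^{-s+\epsilon(1-s)/2}$, and $\epsilon$ is chosen so that $\gamma-\epsilon(n+s-\gamma)=\epsilon(1-s)/2$. The dependence of $\epsilon$ on $n$ comes precisely from the density estimate for $A_d\subseteq E^c$. There, unlike for your lens inside $B$, no geometric lower bound on the measure is available.
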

	
	Without loss of generality, let us suppose that $x_0=0\in\partial E$, so that, up to rotations, the ball $B_{2r}(-2re_n)$ touches $\partial E$ at $0$ from below. 
	Our proof relies on the two following partial results.
	
	\begin{prop} \label{prop::EL_partial1}
		Let $E$ be a $(\Lambda,\gamma)$-almost minimal set in $B_1$. Assume that the ball $B_{2r}(-2re_n)$ touches $E$ at $0\in\partial E$ from below.
		
		Then, there exists $\epsilon=\epsilon(n,s,\gamma)>0$ and a point $x_r\in E^c\cap B_{r^{1+\epsilon}}$ such that 
		\begin{equation}\label{eq::EL}
			H_s[E](x_r) := \int_{\R^n} \frac{\chi_E(y)-\chi_{E^c}(y)}{|x_r-y|^{n+s}}dy \leq cr^{-s+\left(\gamma-\epsilon(n+s-\gamma)\right)},
		\end{equation}
		for some constant $c>0$ depending only on $n$, $s$, and $\Lambda$.
	\end{prop}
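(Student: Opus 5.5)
The plan is to compare $E$ with a competitor obtained by adding a small set $A\subseteq E^c$ near the origin, namely $A := E^c\cap B_\rho$ with $\rho := r^{1+\epsilon}$. The almost supersolution property \eqref{eq::super_sol_prop}, valid for almost minimal sets, gives
\begin{equation*}
\mathcal{L}_s(A,E)-\mathcal{L}_s(A,E^c\setminus A)\leq \Lambda \rho^{n-s+\gamma}.
\end{equation*}
Since $0\in\partial E$, the clean ball condition (Corollary~\ref{cor::clean_ball}) and the uniform density estimates (Proposition~\ref{prop::UDE}) give $|A|\geq c_0\rho^n$ for $\rho$ small.

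The left-hand side should then be rewritten as an average of $H_s[E]$ over $A$. Formally,
\begin{equation*}
\int_A H_s[E](x)\,dx = \mathcal{L}_s(A,E)-\mathcal{L}_s(A,E^c\setminus A)-\mathcal{L}_s(A,A),
\end{equation*}
where the last term must be read as $\int_A\int_A|x-y|^{-n-s}\,dy\,dx$, which is infinite. To avoid this, I will not use the full $H_s$. Instead I will work with a truncated curvature $\int_{B_\delta(x)^c}(\chi_E-\chi_{E^c})|x-y|^{-n-s}\,dy$ at each $x\in A$, and show that the singular contribution near $x$ is controlled. The tool for this is the geometry: the touching ball $B_{2r}(-2re_n)\subseteq E$ forces $E$ to contain a region below a paraboloid of opening $\sim 1/r$. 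At scale $\rho=r^{1+\epsilon}$, this ball contains most of the lower half of $B_\rho$, up to an error of order $\rho^2/r$ in height. So points $x\in A$ that lie at distance $d\gtrsim \rho^2/r$ above the ball see a near half-space, and the symmetric cancellation in the principal value gives an $O(d^{-s})$ bound rather than a divergence.

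Averaging then yields a point $x_r\in A\subseteq E^c\cap B_{r^{1+\epsilon}}$ with
\begin{equation*}
H_s[E](x_r)\leq \frac{\Lambda\rho^{n-s+\gamma}+\text{(errors)}}{|A|}\lesssim \rho^{-s+\gamma}+\ldots = r^{(1+\epsilon)(\gamma-s)}+\ldots .
\end{equation*}
Writing $r^{-s+\gamma-\epsilon(n+s-\gamma)}$ shows the loss is $r^{-\epsilon(n+s)}$ beyond the natural $\rho^{\gamma-s}$. I would attribute this loss to the ratio $\rho^n/|A'|$ with $|A'|\gtrsim \rho^{n}r^{\epsilon n}$-type lower bounds, together with the error terms, and choose $\epsilon$ small depending on $n,s,\gamma$ so that the exponent stays below $s$ in magnitude.

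The main obstacle is making the pointwise selection rigorous: $H_s[E](x)$ for $x\in E^c$ may be $-\infty$-like or fail to be defined in a controlled way, and a mean-value bound only gives some point with small value if the integrand is summable. My first attempt will be to restrict to $A' := A\setminus N_\delta(\partial E)$ for a thin neighborhood, with $\delta = r^{(1+\epsilon)(1+\epsilon')}$. On $A'$ the self-interaction satisfies $\int_{A'}\int_{E^c\cap B_\delta(x)}\cdots$ bounded by $|A'|\delta^{-s}$, and the clean ball gives $|A'|\gtrsim\rho^n$. With this, the supersolution inequality for $A'$ controls $\int_{A'}H_s[E]$ up to $|A'|\delta^{-s}$, which produces the $\epsilon$-loss in the exponent. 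Choosing $\epsilon$ small then makes it compatible with \eqref{eq::EL}.
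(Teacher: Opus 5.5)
\textbf{There is a genuine gap in the averaging step.} For any $A\subseteq E^c$, the quantity you average is $\int_A H_s[E]\,dx=\mathcal{L}_s(A,E)-\mathcal{L}_s(A,E^c\setminus A)-\mathcal{L}_s(A,A)=-\infty$. Passing to $A'=A\setminus N_\delta(\partial E)$ does not repair this. For $x\in A'$ you have $B_\delta(x)\subseteq E^c$, so $\int_{E^c\cap B_\delta(x)}|x-y|^{-n-s}\,dy=+\infty$, and $H_s[E](x)=-\infty$ at \emph{every} point of $A'$. The bound $|A'|\delta^{-s}$ holds only for the interaction outside $B_\delta(x)$. There is also no ``symmetric cancellation'' at points strictly inside $E^c$: principal-value cancellation happens only at boundary points.

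Consequently, a point chosen in $A'$ satisfies \eqref{eq::EL} only vacuously. Read literally, the inequality holds at the centre of any clean ball of $E^c$ in $B_{r^{1+\epsilon}}$, so your construction adds nothing. It also carries none of the information Proposition~\ref{prop::EL_partial2} needs. Truncating the kernel at scale $\delta$ does not help either:
\begin{itemize}
\item To recover the integral over $B_{r^{1+\epsilon/2}}^c$ from a bound at such an $x$, you need a lower bound on $\int_{B_{r^{1+\epsilon/2}}\setminus B_\delta(x)}$. This can be as bad as $-c\delta^{-s}$, already for $E$ a half-space and $x$ at height much larger than $\delta$.
\item Since $\delta^{-s}\geq r^{-s(1+\epsilon)}$, this loss exceeds the target $r^{-s+\epsilon(1-s)/2}$.
\item The short-range interaction between $A'$ and $E^c\setminus A'$, across the $\delta$-parallel surface of the unknown $\partial E$, is uncontrolled.
\end{itemize}
Your exponent bookkeeping reflects this. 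With $|A'|\geq c\rho^n$, nothing in your scheme produces a loss, and the actual loss relative to $\rho^{\gamma-s}$ is $r^{-\epsilon n}$, not $r^{-\epsilon(n+s)}$.

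\textbf{The missing idea is to order the points of the competitor by a monotone foliation}, so that the $A\times A$ self-interaction becomes antisymmetric and cancels. The paper slides the ball $B_r(-re_n)\subseteq B_{2r}(-2re_n)\subseteq E$ upwards, setting $A_t:=E^c\cap B_r((t-r)e_n)$ for $t\in[0,d]$, with $d=r^{1+2\epsilon}$ and $\rho=r^{1+\epsilon}$. Then:
\begin{itemize}
\item The sets $E\cup A_t$ increase from $E$ to $E\cup A_d$.
\item Every $x\in A_d$ lies on $\partial(E\cup A_{t_x})$, where this set has an interior tangent ball of radius $r$. So its curvature at $x$ is well defined and bounded below.
\item The calibration identity $\Per_s(E,B_1)-\Per_s(E\cup A_d,B_1)=\int_{A_d}H_s[E\cup A_{t_x}](x)\,dx$ holds. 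The reason is that the contribution of $y\in A_d$ to the integrand is $\pm1$ according to whether $t_y<t_x$ or $t_y>t_x$, which integrates to zero.
\end{itemize}
Since $A_d$ is a lens of width comparable to $\sqrt{rd}=\rho$, almost minimality bounds the left-hand side by $\Lambda\rho^{n-s+\gamma}$. Meanwhile $E^c\cap B_d\subseteq A_d$, and the density estimates at the \emph{smaller} scale $d$ give $|A_d|\geq c_0d^n$. Averaging yields $t_r$ and $x_r\in\partial(E\cup A_{t_r})\cap A_d$ with $H_s[E\cup A_{t_r}](x_r)\leq\frac{\Lambda}{c_0}\rho^{n-s+\gamma}d^{-n}=\frac{\Lambda}{c_0}r^{-s+\gamma-\epsilon(n+s-\gamma)}$. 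Moreover $H_s[E](x_r)\leq H_s[E\cup A_{t_r}](x_r)$, since $E\subseteq E\cup A_{t_r}$.

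The loss $r^{-\epsilon n}$ is exactly the mismatch between the width $\rho$ and the thickness $d$ of the bubble. The bound at a boundary point with an interior tangent ball is what Proposition~\ref{prop::EL_partial2} actually uses.
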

	
	\begin{prop} \label{prop::EL_partial2}
		Let $E$ be a $(\Lambda,\gamma)$-almost minimal set in $B_1$. Assume that the ball $B_{2r}(-2re_n)$ touches $E$ at $0\in\partial E$ from below.
		
		Then, there exists $\epsilon=\epsilon(n,s,\gamma)>0$ and a point $x_r\in E^c\cap B_{r^{1+\epsilon}}$ such that 
		\begin{equation} \label{eq::EL_partial2}
			\int_{B_{r^{1+\epsilon/2}}^c} \frac{\chi_E(y)-\chi_{E^c}(y)}{|x_r-y|^{n+s}}dy \leq cr^{-s+\epsilon\frac{1-s}{2}}.
		\end{equation}
		for some constant $c>0$ depending only on $n$,~$s$,~$\Lambda$, and~$\gamma$.
	\end{prop}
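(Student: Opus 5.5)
The plan is to deduce Proposition~\ref{prop::EL_partial2} from Proposition~\ref{prop::EL_partial1}, taking the same point $x_r\in E^c\cap B_{r^{1+\epsilon}}$ that Proposition~\ref{prop::EL_partial1} provides, and splitting the full curvature integral as
\begin{equation*}
H_s[E](x_r)=\int_{B_{r^{1+\epsilon/2}}}\frac{\chi_E(y)-\chi_{E^c}(y)}{|x_r-y|^{n+s}}dy+\int_{B^c_{r^{1+\epsilon/2}}}\frac{\chi_E(y)-\chi_{E^c}(y)}{|x_r-y|^{n+s}}dy .
\end{equation*}
The outer integral is the one we want to bound. By \eqref{eq::EL} it equals $H_s[E](x_r)$ minus the inner integral. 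It therefore suffices to bound the inner integral from below by $-Cr^{-s+\epsilon\frac{1-s}{2}}$, and to choose $\epsilon$ small so that $\gamma-\epsilon(n+s-\gamma)\geq \epsilon\frac{1-s}{2}$, i.e.\ $\epsilon\leq \gamma/(n+s-\gamma+\frac{1-s}{2})$. The inner integral is understood in the principal value sense near $x_r$. This causes no trouble, because $x_r$ is at positive distance from the ball below, and since $x_r\in E^c$ the singular part is negative.

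To control the inner term, I use that $E$ contains the ball $B_{2r}(-2re_n)$. Inside $B_{\rho}$ with $\rho:=r^{1+\epsilon/2}\ll r$, this ball contains the region below the paraboloid $\{y_n<-|y'|^2/(2r)\}$ (up to a constant factor). The worst case for a lower bound is when everything else in $B_\rho$ is in $E^c$. So I bound
\begin{equation*}
\int_{B_\rho}\frac{\chi_E-\chi_{E^c}}{|x_r-y|^{n+s}}dy \geq -\int_{B_\rho\setminus P}\frac{dy}{|x_r-y|^{n+s}}+\int_{B_\rho\cap P}\frac{dy}{|x_r-y|^{n+s}},
\end{equation*}
where $P$ is the paraboloid region. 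The problem is that $x_r$ sits at distance up to $r^{1+\epsilon}$ from $0$, and it is not necessarily at height above the paraboloid. Let $d:=\mathrm{dist}(x_r,P)>0$. By the usual symmetrization (reflect across the horizontal hyperplane through $x_r$), the contributions of the half-space above and below cancel. What remains is the negative contribution of the gap between the hyperplane $\{y_n=(x_r)_n\}$ and the paraboloid within $B_\rho$. That gap is controlled by $|(x_r)_n|+|y'|^2/r$, which gives a bound of the type
\begin{equation*}
\int_{B_\rho\setminus B_{|x_r|}(x_r)} \frac{|(x_r)_n|+|y'|^2/r}{|x_r-y|^{n+s+1}}dy\lesssim r^{(1+\epsilon)(-s)}+ r^{-1}\rho^{1-s},
\end{equation*}
together with the near-region part, which is handled via $d$.

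The main obstacle is the near-field singularity. The distance $d$ of $x_r$ from $E$ is not controlled from below, so $\int_{B_\rho}$ of $-\chi_{E^c}$ near $x_r$ behaves like $-d^{-s}$, which is potentially huge. I therefore expect the argument must instead pick $x_r$ more cleverly, or more precisely exploit that \eqref{eq::EL} already contains this singular negative contribution. In other words, I would go back to the construction in Proposition~\ref{prop::EL_partial1}, where $x_r$ arises from a competitor $E\cup A$ with $A$ a small set near $0$. I would aim to show that one may take $x_r$ with $d\gtrsim r^{1+\epsilon}$, for instance as the center of a clean ball given by Corollary~\ref{cor::clean_ball} at scale $r^{1+\epsilon}$, so that the near-field error is at most $Cr^{-s(1+\epsilon)}$. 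Finally, the term $r^{-1}\rho^{1-s}=r^{-s+\frac{\epsilon}{2}(1-s)}$ produces exactly the exponent in \eqref{eq::EL_partial2}. The remaining term $r^{-s(1+\epsilon)}$ must then be absorbed, which forces tuning $\epsilon$ and possibly using the improved cancellation from the clean ball. I regard this balancing as the delicate point.
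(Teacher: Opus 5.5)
Your skeleton is the paper's: write the outer part as the full curvature minus the inner part, bound the inner part below by a tangent paraboloid at cost $r^{-1}\xi^{1-s}$ with $\xi=r^{1+\epsilon/2}$, then take $\epsilon=\frac{2\gamma}{2n+1+s-2\gamma}$. But you apply it to $E$, and there the obstacle you flag is a genuine gap that your remedy does not close.

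The point $x_r$ lies in $A_d\subseteq E^c$, typically in the interior of $E^c$. In any case $E$ has no interior tangent ball at $x_r$: the ball $B_{2r}(-2re_n)$ touches at $0$, and this is also what produces your bad offset term $|(x_r)_n|$. If $x_r$ is interior to $E^c$, then $\int_{B_\xi}\frac{\chi_E-\chi_{E^c}}{|x_r-y|^{n+s}}\,dy=-\infty$, because the kernel is not integrable near $x_r$ and nothing cancels it. Hence $H_s[E](x_r)=-\infty$, your splitting reads $-\infty-(-\infty)$, and an upper bound on $H_s[E](x_r)$ says nothing about the outer part.

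Relocating $x_r$ to the centre of a clean ball of $E^c$ does not help, for three reasons:
\begin{enumerate}
\item it makes things worse, since the near-field term is then $-\infty$, not $-Cr^{-s(1+\epsilon)}$;
\item it is not permitted, since $x_r$ is the specific point selected by the calibration argument;
\item even an error of size $r^{-s(1+\epsilon)}$ could never be absorbed into $r^{-s+\epsilon\frac{1-s}{2}}$, because $r^{-s(1+\epsilon)}\gg r^{-s}$ for every $\epsilon>0$.
\end{enumerate}

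The missing idea is this. Although Proposition~\ref{prop::EL_partial1} is stated for $H_s[E](x_r)$, its proof actually gives \eqref{eq::curv_bound}. That is a bound on $H_s[F](x_r)$ for the modified set $F:=E\cup A_{t_r}$ (with $A_t$ as in \eqref{eq::def_bubble}), at a point $x_r\in\partial F$. It is $H_s[F](x_r)$ that you should split.

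First, $A_{t_r}\subseteq B_r((t_r-r)e_n)\setminus B_{2r}(-2re_n)$. This is a lens of height $O(d)$, with $d=r^{1+2\epsilon}$, and horizontal radius $O(\sqrt{rd})=O(r^{1+\epsilon})$. Hence $F=E$ outside $B_\xi$, and the outer integral for $E$ coincides with the one for $F$.

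Second, $F\supseteq B_r((t_r-r)e_n)$ and $x_r\in\partial B_r((t_r-r)e_n)$, so $F$ has an interior tangent ball of radius $r$ exactly at $x_r$. Comparing with the tangent paraboloid there yields $\int_{B_\xi}\frac{\chi_F-\chi_{F^c}}{|x_r-y|^{n+s}}\,dy\geq -Cr^{-1}\xi^{1-s}=-Cr^{-s+\epsilon\frac{1-s}{2}}$. There is no near-field singularity and no offset term. The fact that $B_\xi$ is centred at $0$ rather than at $x_r$ costs only $|x_r|\,\xi^{-1-s}\lesssim r^{-s+\epsilon\frac{1-s}{2}}$.

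Combining this with \eqref{eq::curv_bound} and the choice \eqref{eq::epsilon_choice} gives \eqref{eq::EL_partial2}.
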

	
	\begin{proof}[Proof of Proposition~\ref{prop::EL_partial1}]
		Let us take $\epsilon>0$, to be precised later. We define $\rho:=r^{1+\epsilon}$ and $d:=r^{1+2\epsilon}$. Let us also consider the smaller ball $B_{r}(-re_n)$ and the sets
		\begin{equation} \label{eq::def_bubble}
			A_t := E^c\cap B_{r}((t-r)e_n),\quad \mbox{for every }t\in[0,d].
		\end{equation}
		
		We consider the set $A_d$ and we observe that, by construction, $E^c\cap B_d\subseteq A_d$. Thus, from the almost minimality of $E$ and the uniform density estimates in Proposition~\ref{prop::UDE}, we infer that 
		\begin{equation} \label{eq::EL_upper_bound1}
			\Per_s(E,B_1)-\Per_s(E\cup A_d,B_1) \leq \Lambda \rho^{n-s+\gamma} = \Lambda r^{(n-s+\gamma)(1+\epsilon)}.
		\end{equation}
			
			On the other hand, by calibration for the $s$-perimeter (see for instance \cite[Proposition 5.2]{MR3401008}), we have that
			\begin{equation} \label{eq::calib}
				\Per_s(E,B_1)-\Per_s(E\cup A_d,B_1) = \int_{A_d} H_s[E\cup A_{t_x}](x) dx,
			\end{equation}
			where $t_x\in[0,d]$ is such that $x\in \partial(E\cup A_{t_x})$.
			
			Arguing by contradiction, let us suppose that, for (almost) every $x\in A_d$, we have 
			$$H_s[E\cup A_{t_x}](x)> \frac{\Lambda}{c_0} r^{-s+\left(\gamma - \epsilon(n+s-\gamma)\right)}, $$ 
			where the constant $c_0>0$ is as in Proposition~\ref{prop::UDE}.
			
			Then, we deduce from~\eqref{eq::calib} and Proposition~\ref{prop::UDE} that
			\begin{equation*}
				\begin{split}
					&\Per_s(E,B_1)-\Per_s(E\cup A_d,B_1) > \frac{\Lambda}{c_0} r^{-s+\left(\gamma - \epsilon(n+s-\gamma)\right)}|A_d| \\
					&\qquad \geq \frac{\Lambda}{c_0} r^{-s+\left(\gamma - \epsilon(n+s-\gamma)\right)} |E^c \cap B_d| \geq \Lambda r^{-s+\left(\gamma - \epsilon(n+s-\gamma)\right)} d^n =  \Lambda r^{(n-s+\gamma)(1+\epsilon)}.
				\end{split}
			\end{equation*}
			
			Plugging this into \eqref{eq::EL_upper_bound1}, we obtain a contradiction. Therefore, there exist $t_r\in[0,d]$ and $x_r\in \partial(E\cap A_{t_r} ) \subset E^c\cap A_d$ such that
			\begin{equation} \label{eq::curv_bound}
				H_s[E\cup A_{t_r}](x_r) \leq  \frac{\Lambda}{c_0} r^{-s+\left(\gamma-\epsilon(n+s-\gamma)\right)}.
			\end{equation}
			Moreover, choosing $\epsilon\in\left(0,\frac{\gamma}{n+s-\gamma}\right)$, we have that $\gamma-\epsilon(n+s-\gamma)>0$.
		\end{proof}
		
		\begin{proof}[Proof of Proposition~\ref{prop::EL_partial2}]
			Let $x_r$ and $t_r$ be as in the proof of Proposition~\ref{prop::EL_partial1}. Let us also define $\xi:=r^{1+\epsilon/2}$. Then, we consider the quantity
			
			\begin{equation} \label{eq::weak_curv}
				\int_{B_{\xi}^c} \frac{\chi_E(y)-\chi_{E^c}(y)}{|x_r-y|^{n+s}}dy = H_s[E\cup A_{t_r}](x_r) - \int_{B_{\xi}} \frac{\chi_{E\cup A_{t_r}}(y)-\chi_{({E\cup A_{t_r}})^c}(y)}{|x_r-y|^{n+s}}dy 
			.\end{equation}
			
			Observe that the set $E\cup A_{t_r}$ is touched at $x_r$ from below by a parabula of opening $-r^{-1}$, and $x_r\in B_{\xi/2}$, whenever $r$ is small enough. Let us denote with $\phi$ such a parabula, and $P:=\{y_n<\phi(y')\}$. Then, recalling \cite[Appendix~B]{MR3926519} (see also \cite{MR3107529, MR3331523}), we have that
			\begin{equation} \label{eq::EL_upper_bound2}
				\begin{split}
					&\int_{B_{\xi}} \frac{\chi_{E\cup A_{t_r}}(y)-\chi_{({E\cup A_{t_r}})^c}(y)}{|x_r-y|^{n+s}}dy \geq \int_{B_{\xi}} \frac{\chi_{P}(y)-\chi_{P^c}(y)}{|x_r-y|^{n+s}}dy\\
					&\qquad\qquad =-\int_{B'_{\xi}} \int_{\frac{\nabla\phi(x_r)\cdot(y'-x_r')}{|x_r'-y'|}}^{\frac{\phi(y')-\phi(x_r')}{|x_r'-y'|}} \frac{dt}{(1+t^2)^{n+s}} \frac{dy'}{|x_r'-y'|^{n+s-1}} \\ 
					&\qquad\qquad \geq - \int_{B'_{\xi}} \frac{\norm{\phi}_{\cont^2(B'_{\xi})}|x_r'-y'|^2}{|x_r'-y'|} \frac{dy'}{|x_r'-y'|^{n+s-1}} \\
					&\qquad\qquad =	-\norm{\phi}_{\cont^2(B'_{\xi})} \int_{B'_{\xi}} \frac{dy'}{|x_r'-y'|^{n+s-2}} \\
					&\qquad\qquad \geq -\frac{c}{r}\int_{0}^{\frac{3}{2}\xi} \sigma^{-s}d\sigma = -cr^{-1}\xi^{1-s} = -cr^{-s+\frac{\epsilon}{2}(1-s)}.
				\end{split}
			\end{equation}
			for some positive constant $c$ depending only on $n$ and $s$.
			
			Thus, using also Proposition~\ref{prop::EL_partial1}, we infer that
			\begin{equation*}
				\int_{B_{\xi}^c} \frac{\chi_E(y)-\chi_{E^c}(y)}{|x_r-y|^{n+s}}dy \leq cr^{-s+\left( \gamma-\epsilon(n+s-\gamma)\right)} + cr^{-s+\epsilon\frac{1-s}{2}} , 
			\end{equation*}
			up to renaming $c$, now depending also on $\Lambda$ and $\gamma$.
			
			Taking $\epsilon=\frac{2\gamma}{2n+1+s-2\gamma}\in\left(0,\frac{\gamma}{n+s-\gamma}\right)$, we obtain
			\begin{equation} \label{eq::epsilon_choice}
				\gamma-\epsilon(n+s-\gamma) = \frac{\epsilon}{2}(1-s),
			\end{equation}
			and hence
			\begin{equation*}
				\int_{B_{\xi}^c} \frac{\chi_E(y)-\chi_{E^c}(y)}{|x_r-y|^{n+s}}dy \leq  cr^{-s+\epsilon\frac{1-s}{2}} . 
			\end{equation*}
		\end{proof}
		
		\begin{proof}[Proof of Theorem~\ref{th::EL}]
			Up to translations, we assume that $x_0=0\in\partial E$. Moreover, we define
			$$ \xi := r^{1+\frac{\epsilon}{2}} = \rho^{1-\frac{\epsilon}{2(1+\epsilon)}},$$
			so that $\rho/\xi=r^{\epsilon/2}\to0$, as $r\to0$. Let also $x_r$ be as in Proposition~\ref{prop::EL_partial1}.
			
			Then, for any $y\in B_\xi^c$, we have that
			\begin{equation*}
				1-r^{\epsilon/2}\leq \frac{|x_r-y|}{|y|} \leq 1+r^{\epsilon/2}. 
			\end{equation*}
			
			Therefore, for a small $r$, we obtain that
			\begin{equation*}
				\begin{split}
					&H^{\xi}_s[E](0) = \int_{B_{\xi}^c} \frac{\chi_E(y)-\chi_{E^c}(y)}{|y|^{n+s}}dy =  \int_{B_{\xi}^c} \frac{\chi_E(y)-\chi_{E^c}(y)}{|x_r-y|^{n+s}}\left(\frac{|x_r-y|}{|y|}\right)^{n+s}dy\\
					&\qquad\qquad\leq \int_{B_{\xi}^c} \frac{\chi_E(y)-\chi_{E^c}(y)}{|x_r-y|^{n+s}}dy + (cr^{\epsilon/2}+O(r^{\epsilon}))\int_{B_\xi^c}\frac{dy}{|x_r-y|^{n+s}}\\
					&\qquad\qquad \leq cr^{-s+\epsilon\frac{1-s}{2}} +cr^{\epsilon/2}\int_{\xi/2}^{+\infty}\sigma^{-1-s}d\sigma \leq cr^{-s+\epsilon\frac{1-s}{2}},
				\end{split}
			\end{equation*}
			for some positive constant $c=c(n,s,\Lambda,\gamma)$, possibly changing from line to line.
		\end{proof}
		
		Whenever an almost minimal set $E$ is touched from above, Theorem~\ref{th::EL} applied to $E^c$ produces the analogous lower bound for the fractional mean curvature at the touching point.
		
		\begin{cor} \label{cor::reverseELeq}
			Let $E$ be a $(\Lambda,\gamma)$-almost minimal set in $B_1$. Assume that there exists a ball of radius $2r$ touching $E$ at some $x_0\in\partial E$ from above.
			
			Then, there exists $\epsilon=\epsilon(n,s,\gamma)>0$ such that 
			\begin{equation*} 
				H^{r^{1+\epsilon/2}}_s[E](x_0) \geq -cr^{-s+\epsilon\frac{1-s}{2}}, 
			\end{equation*}
			for some constant $c>0$ depending only on $n$, $s$,~$\Lambda$, and~$\gamma$.
		\end{cor}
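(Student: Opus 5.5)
The plan is to reduce Corollary~\ref{cor::reverseELeq} to Theorem~\ref{th::EL} applied to the complementary set $F:=E^c$, up to null sets; for regularity one may take the interior of $E^c$, and this changes neither $\partial E$ nor $\Per_s$.

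First I check that $F$ is again $(\Lambda,\gamma)$-almost minimal in $B_1$. The kernel $\chi_E(x)\chi_{E^c}(y)|x-y|^{-n-s}$ integrated over the symmetric domain $\Omega_\sharp$ is symmetric under swapping $E$ and $E^c$, so $\Per_s(E,\Omega)=\Per_s(E^c,\Omega)$ for every set. Next, if $G\Delta F\subseteq\widetilde B_r\Subset B_1$, then $G^c\Delta E=G\Delta F\subseteq\widetilde B_r$. Applying \eqref{eq::almost_min} to the competitor $G^c$ gives
\[
\Per_s(F,B_1)=\Per_s(E,B_1)\leq\Per_s(G^c,B_1)+\Lambda r^{n-s+\gamma}=\Per_s(G,B_1)+\Lambda r^{n-s+\gamma}.
\]
This is exactly the $(\Lambda,\gamma)$-almost minimality of $F$.

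Second, I transfer the geometric hypothesis. A ball $B$ of radius $2r$ touching $E$ from above at $x_0$ means $B\subseteq E^c$ (up to null sets) with $x_0\in\partial B\cap\partial E$. Hence $B$ touches $F$ from below at $x_0\in\partial F=\partial E$. After a rotation sending the inner normal direction to $e_n$, this is precisely the configuration of Theorem~\ref{th::EL}. The rotation is harmless because the truncated curvature $H^{\rho}_s$ is rotation invariant.

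Third, Theorem~\ref{th::EL} applied to $F$ yields $\epsilon=\epsilon(n,s,\gamma)$ and $c=c(n,s,\Lambda,\gamma)$, the same as for $E$, such that $H^{r^{1+\epsilon/2}}_s[F](x_0)\leq c r^{-s+\epsilon\frac{1-s}{2}}$. Since $\chi_F-\chi_{F^c}=-(\chi_E-\chi_{E^c})$ almost everywhere, the integrand in the definition of the truncated curvature changes sign, so $H^{\rho}_s[F](x_0)=-H^{\rho}_s[E](x_0)$. This gives the claimed lower bound.

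The only point needing care is to make sure the complementation does not disturb the hypotheses used inside the proof of Theorem~\ref{th::EL}, in particular the uniform density estimates of Proposition~\ref{prop::UDE}. These are symmetric in $E$ and $E^c$, and $F$ has the same constants $\Lambda,\gamma$, so the constants $\epsilon$ and $c$ are unchanged. I expect no real obstacle beyond this bookkeeping.
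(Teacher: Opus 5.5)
Your proposal is correct and follows the paper's own argument: the paper obtains the corollary by applying Theorem~\ref{th::EL} to $E^c$, which is again $(\Lambda,\gamma)$-almost minimal, and uses that the truncated curvature changes sign under complementation. You spell out the steps the paper leaves implicit, namely the symmetry of $\Per_s$ under $E\leftrightarrow E^c$, the transfer of competitors, and the fact that the constants are unchanged.
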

		
		
		\subsection{Harnack-type inequalities} \label{sec::harnack}
		
		In this section, we present a Harnack-type result for almost-minimizers. An iterative application of this result is the main ingredient in establishing the convergence of the sequence $\{E_k\}_k$ defined in~\eqref{eq::flatness_Ek2}.   
		
		\begin{theorem}[Harnack-type Inequalities] \label{th::harnack_ineq}
			Let $\Lambda\geq0$ and~$\gamma\in(0,s]$, and let~$E\subseteq\R^n$. Let also $\epsilon>0$ be as in Theorem~\ref{th::EL}.
			
			Then, for any~$\alpha\in\left(0,\frac{\epsilon}{2+\epsilon}\right)$, there exist positive constants $\delta_0$ and~$k_0$, depending only on~$n$,~$s$,~$\Lambda$,~$\gamma$, and~$\alpha$, such that, setting~$a:=2^{-\alpha k}$ for some~$k\geq k_0$, if~$E$ is~$(2^{-k\gamma}\Lambda,\gamma)$-almost minimal for~$\Per_s$ in~$B_{2^k}$, and there exists a collection $\{\nu_j\}_j\subseteq \S^{n-1}$ such that 
			\begin{equation}  \label{eq::flatness_harnack}
				\partial E\cap C_{2^j}\subseteq\{|x\cdot\nu_j|\leq a2^{j(1+\alpha)}\},\qquad {\mbox{for all }} j=0,\dots,k ,
			\end{equation}
			with~$\nu_0=e_n$, then we have the inclusions
			\begin{eqnarray}
				\label{eq::harnack_ineq1}\text{either} \quad&\partial E\cap C_{\delta_0} \subseteq \big\{x_n\leq a(1-\delta_0^2)\big\}\\
				\label{eq::harnack_ineq2}\text{or} \quad &\partial E\cap C_{\delta_0} \subseteq \big\{x_n\geq a(\delta_0^2-1)\big\}.
			\end{eqnarray}
		\end{theorem}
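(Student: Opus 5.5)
We follow the Caffarelli--Roquejoffre--Savin scheme (\cite[Section~6]{MR2675483}, adapted in \cite{MR5015001}). The novelty is that the exact Euler--Lagrange equation is unavailable; we replace it with the truncated inequalities of Theorem~\ref{th::EL} and Corollary~\ref{cor::reverseELeq}.

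Consider the point $-ae_n$ and the lower half of the slab $\{|x_n|\le a\}$ in the unit cylinder. By the clean ball condition (Corollary~\ref{cor::clean_ball}), applied at scale comparable to $a$, and by the fact that $\partial E\cap C_1\subseteq\{|x_n|\le a\}$, one of two alternatives holds in $C_{1/2}$. Either $E$ contains a substantial portion of the region $\{-a\le x_n\le -a/2\}$, or $E^c$ contains a substantial portion of $\{a/2\le x_n\le a\}$. More precisely, we may assume without loss of generality that
\[
\bigl|E^c\cap\{x_n\le 0\}\cap C_{1/2}\bigr|\le \tfrac12\bigl|\{-a\le x_n\le0\}\cap C_{1/2}\bigr|,
\]
and then aim to prove \eqref{eq::harnack_ineq2}. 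The other case gives \eqref{eq::harnack_ineq1} by applying the argument to $E^c$, which is also almost minimal.

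We argue by contradiction. Suppose some point of $\partial E\cap C_{\delta_0}$ lies below $x_n=a(\delta_0^2-1)$. Slide from below the subgraph of a barrier
\[
\psi(x)=-a+a\delta_0^2\bigl(1-|x'|^2/\delta_0^2\bigr)_+\cdot c,
\]
smoothed to a paraboloid of opening $\sim a$ and cut off outside $C_{2\delta_0}$. Raising it until it first touches $\partial E$ produces a contact point $x_0\in C_{2\delta_0}$ at height $\le a(\delta_0^2-1)+O(a\delta_0^2)$. At $x_0$, $E$ is touched from below by a ball of radius $2r\sim 1/a$ (curvature $\lesssim a$); alternatively we use small balls at scale $r=a^{\theta}$, and the choice of $r$ is part of the bookkeeping. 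Theorem~\ref{th::EL} then yields
\[
H^{r^{1+\epsilon/2}}_s[E](x_0)\le c\,r^{-s+\epsilon\frac{1-s}{2}}.
\]
After the rescaling in the statement, $\Lambda$ is replaced by $2^{-k\gamma}\Lambda$, so this right-hand side is additionally multiplied by a small power of $2^{-k}$.

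Next we bound the truncated curvature from below by splitting $B^c_{r^{1+\epsilon/2}}(x_0)$ into three regions.
\begin{enumerate}[(i)]
\item The intermediate region near the barrier. Since $E$ lies above the barrier there, the contribution is $\ge -C a\,r^{-s}\cdot(\text{small})$, using that the barrier is a $\cont^2$ graph with $\cont^2$ norm $\lesssim a$, as in \eqref{eq::EL_upper_bound2}.
\item The region $C_1\setminus C_{2\delta_0}$ below $x_n=0$. Here the density assumption gives a positive contribution $\ge c\,a$, because a set of measure $\sim a$ lies in $E$ at distance $\sim 1$ from $x_0$ while its reflection across $x_n=x_0\cdot e_n$ lies in $E^c$. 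This is the usual gain from the ``mostly $E$ below'' alternative.
\item The tail $\mathbb{R}^n\setminus C_1$. Hypothesis \eqref{eq::flatness_harnack} at all dyadic scales $2^j$ bounds the symmetric-difference contribution by
\[
C\sum_{j\ge1}\frac{a2^{j(1+\alpha)}\,2^{j(n-1)}}{2^{j(n+s)}}\le C a\sum_j 2^{j(\alpha-s)}.
\]
This is finite and small relative to $ca$ provided $\alpha<s$, and in addition a factor $\delta_0^{\,\cdot}$ can be extracted from the cutoff geometry. Where $2^j>2^k$ (beyond the flatness range), we use only the trivial bound $2^{-ks}$, which is $\ll a=2^{-\alpha k}$ when $\alpha<s$ and $k\ge k_0$.
\end{enumerate}
Putting these together gives $H^{r^{1+\epsilon/2}}_s[E](x_0)\ge c a - C a\delta_0^{\beta}$ for some $\beta>0$, hence $\ge \tfrac c2 a$ once $\delta_0$ is small.

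The main obstacle is making the upper bound from Theorem~\ref{th::EL} beat $\tfrac c2 a$. We need $c\,r^{-s+\epsilon(1-s)/2}\,2^{-k\gamma(\cdot)}\ll a=2^{-\alpha k}$, while the truncation radius $r^{1+\epsilon/2}$ must stay below the scale $a$ at which the slab geometry matters; otherwise the discarded ball $B_{r^{1+\epsilon/2}}(x_0)$ hides the contribution in (ii). Our first attempt is to choose $r$ as a power of $a$ with $r^{1+\epsilon/2}\ll a$, i.e.\ $r\approx a^{2/(2+\epsilon)}$, and then verify that the condition $\alpha<\epsilon/(2+\epsilon)$ is exactly what makes the exponents compatible, with the factor $2^{-k\gamma}$ absorbing the constants for $k\ge k_0$. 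This produces the contradiction and proves \eqref{eq::harnack_ineq2}.
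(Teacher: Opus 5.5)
There is a genuine gap in where you place the measure dichotomy. You impose it on $C_{1/2}\cap\{-a\le x_n\le 0\}$ and collect the gain in (ii) from $C_1\setminus C_{2\delta_0}$. That region is at distance of order one from the contact point, so the gain is only $c\,a$ with $c=c(n,s)$. The far-field term (iii) is of the same order. The dyadic flatness \eqref{eq::flatness_harnack} and the symmetry cancellation bound the contribution of $\R^n\setminus B_{1/2}(x_0)$ by $c_1a$, with $c_1=c_1(n,s,\alpha)$ growing like $(s-\alpha)^{-1}$. This bound only sees the dyadic slabs, not your barrier, so no power of $\delta_0$ can be extracted from it. Finiteness is not smallness relative to $ca$. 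Your inequality $H\ge ca-Ca\delta_0^\beta$ is therefore unfounded, and what you actually obtain, $ca-c_1a$, has no sign. Your half-slab dichotomy is also not exhaustive: ``$E$ fills half of the lower half-slab'' and ``$E^c$ fills half of the upper half-slab'' can fail simultaneously.

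The missing idea is localization at scale $\delta_0$, as in \cite[Lemma 6.9]{MR2675483}. Split according to whether $|E\cap C_{\delta_0,a}|\ge\frac12|C_{\delta_0,a}|$, where $C_{\delta_0,a}=B^{n-1}_{\delta_0}\times(-a,a)$. Suppose a paraboloid of opening $a/2$, slid from below, touches $\partial E$ at $y$ with $y_n\le-(1-\delta_0^2)a$. The excess of $E$ over its subgraph $P$ then has measure $\gtrsim\delta_0^{n-1}a$ within distance $3\delta_0$ of $y$, so it contributes $\gtrsim a\delta_0^{-1-s}$. Once $\delta_0$ is small, this absorbs both the tail $c_1a$ and the paraboloid term $\int_{B_{1/2}(y)}(\chi_P-\chi_{P^c})|x-y|^{-n-s}\,dx\ge-c_3a$.

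The Euler--Lagrange bookkeeping must also be redone. The right-hand side of Theorem~\ref{th::EL} is not proportional to $\Lambda$: the parabola term \eqref{eq::EL_upper_bound2} and the shift from $x_r$ to $x_0$ are $\Lambda$-free. So replacing $\Lambda$ by $2^{-k\gamma}\Lambda$ buys nothing. Applying the theorem to $E$ itself in $B_1$ with $r<1$ yields an upper bound $\gtrsim 1$, which is useless against a gain that tends to $0$. What works is to rescale. The set $2^{-k}E$ is $(\Lambda,\gamma)$-almost minimal in $B_1$ and is touched at $2^{-k}y$ by a paraboloid of opening $2^{k-1}a$, so one takes $r\sim 2^{-k(1-\alpha)}$, i.e.\ a ball of radius $\sim 1/a$ in original units. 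Scaling back gives $H^\rho_s[E](y)\le c\,2^{-\frac k2(\epsilon(1-s)(1-\alpha)+2\alpha s)}$ with $\rho=2^{-\frac k2(\epsilon-\epsilon\alpha-2\alpha)}$. This is $o(2^{-k\alpha})=o(a)$ precisely when $\epsilon-\epsilon\alpha-2\alpha>0$, i.e.\ $\alpha<\frac{\epsilon}{2+\epsilon}$. The truncation radius only needs $\rho\ll\delta_0$, so that $B_\rho(y)$ removes a negligible part of $C_{\delta_0,a}$; it does not need $\rho\ll a$. With this $r$, the requirement $\rho\ll a$ would force the stronger condition $\alpha<\frac{\epsilon}{4+\epsilon}$. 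Your tentative choice $r\approx a^{2/(2+\epsilon)}$ is neither this choice nor verified.
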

		
		We point out that, even if we have a control on the fractional mean curvature only outside a small neighborhood of a touching point, we are still able to deduce Theorem~\ref{th::harnack_ineq} from Theorem~\ref{th::EL}. The idea of the proof goes along the lines of the analogous result in \cite[Lemma 6.9]{MR2675483} (see also \cite[Theorem 2.14]{MR5015001}). Theorem~\ref{th::harnack_ineq} is a direct consequence of the following:
		
		\begin{lemma}[Partial Harnack-type Inequality]\label{lemma::partial_harnack}
			Let $\Lambda\geq0$ and~$\gamma\in(0,s]$, and let~$E\subseteq\R^n$. Let also $\epsilon>0$ be as in Theorem~\ref{th::EL}.
			
			Then, for any~$\alpha\in\left(0,\frac{\epsilon}{2+\epsilon}\right)$, there exist positive constants $\delta_0$ and~$k_0$, depending only on~$n$,~$s$,~$\Lambda$,~$\gamma$, and~$\alpha$, such that, setting~$a:=2^{-\alpha k}$ for some~$k\geq k_0$, if~$E$ is a~$(2^{-k\gamma}\Lambda,\gamma)$-almost minimal set in~$B_{2^k}$ such that
			\begin{equation}  \label{eq::flatness_harnack_lemma}
				\partial E\cap C^{\nu_j}_{2^j}\subseteq\big\{|x\cdot\nu_j|\leq a2^{j(1+\alpha)}\big\},\qquad{\mbox{for all }} j=0,\dots,k ,
			\end{equation}
			with~$\nu_0=e_n$, and
			\begin{equation} \label{eq::half_measure_cylinder}
				|E\cap C_{\delta_0,a}| \geq \frac{1}{2}|C_{\delta_0,a}| = \omega_{n-1}\delta_0^{n-1}a,
			\end{equation}where
			$$C_{\delta_0,a}:=B^{n-1}_{\delta_0}\times(-a,a),$$
			then we have that
			\begin{eqnarray} 
				\label{eq::inclusion_harnack1}\mbox{either}\quad&C_{\delta_0}\cap\big\{x_n<-a(1-\delta_0^2)\big\}\subseteq E\\
				\label{eq::inclusion_harnack2}\mbox{or}\quad&C_{\delta_0}\cap\big\{x_n>a(1-\delta_0^2)\big\}\subseteq E.
			\end{eqnarray}
		\end{lemma}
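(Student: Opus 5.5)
The plan follows the scheme of \cite[Lemma 6.9]{MR2675483}, with the Euler--Lagrange equation replaced by the microscopic inequalities of Theorem~\ref{th::EL} and Corollary~\ref{cor::reverseELeq}. By the measure hypothesis~\eqref{eq::half_measure_cylinder}, at least one of the two halves $C_{\delta_0,a}\cap\{x_n<0\}$ and $C_{\delta_0,a}\cap\{x_n>0\}$ contains a fixed fraction (say $1/4$) of $|C_{\delta_0,a}|$ of points of $E$. Say it is the lower half. We aim to prove~\eqref{eq::inclusion_harnack1}; the other case is symmetric and uses Corollary~\ref{cor::reverseELeq} with $E^c$.

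We argue by contradiction and slide a barrier from below. Take the family of subgraphs $\{x_n< -a + a\delta_0^2 \varphi(x'/\delta_0)+t\}$, where $\varphi$ is a fixed smooth bump: $\varphi\equiv 1$ on $B^{n-1}_{1/2}$, supported in $B^{n-1}_1$, concave near its maximum. More concretely, we use a piece of a paraboloid of opening $\sim a$, as in \cite{MR2675483}. If~\eqref{eq::inclusion_harnack1} fails, then as $t$ increases from a very negative value, the boundary of the barrier first touches $\partial E$ at some point $x_0\in C_{\delta_0}$ with $x_{0,n}\le -a(1-\delta_0^2)+O(a\delta_0^2)$. At $x_0$ the set $E$ is touched from below by a smooth surface of curvature $\lesssim a$, hence by a ball of radius $2r\sim a^{-1}$ (up to constants). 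Since $r$ is large in this rescaled picture, we rescale to a unit scale and apply Theorem~\ref{th::EL}. The almost-minimality constant $2^{-k\gamma}\Lambda$ becomes small, and the outcome is an upper bound
\[
H^{\xi}_s[E](x_0)\le C a^{1+\beta}
\]
for some $\beta>0$ tied to $\epsilon$. Here $\xi$ is the small excision radius from Theorem~\ref{th::EL}, and the condition $\alpha<\epsilon/(2+\epsilon)$ is exactly what will make $2^{-k\gamma}$-type errors and the excised contribution $o(a)$.

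We then bound $H^{\xi}_s[E](x_0)$ from below by $c\,a$ to reach the contradiction. Split the integral into three regions. In $B_{\xi}^c\cap C_{\delta_0}$, the set $E$ lies above the barrier, so it contributes at least the barrier's contribution, which is $\ge -Ca\delta_0^{2-s}$. The remaining part of the unit cylinder contains the positive-measure set of $E$-points below $\{x_n<0\}$ given by~\eqref{eq::half_measure_cylinder}. These points lie above the touching level $\approx -a$ and at distance $\sim\delta_0$ from $x_0$, so they give a positive contribution $\ge c\,a\,\delta_0^{n-1}\delta_0^{-n-s}$ relative to the flat configuration. The far region $\bigcup_j (C_{2^{j+1}}\setminus C_{2^j})$ is handled with the flatness hypothesis~\eqref{eq::flatness_harnack_lemma}: the boundary lies in slabs of width $a2^{j(1+\alpha)}$ with tilting normals $\nu_j$, and the antisymmetric cancellation of $\chi_E-\chi_{E^c}$ over slabs gives a tail error $\lesssim a\sum_j 2^{j(1+\alpha)}2^{-j(1+s)}\cdot 2^{j\alpha}$-type terms. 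This sum converges once $\alpha$ is small relative to $s$ (shrinking the allowed range if needed), and it is $\le Ca$ uniformly.

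Choosing $\delta_0$ small, the positive contribution $c a\delta_0^{-1-s}$ dominates both $Ca\delta_0^{2-s}$ and $Ca$. Choosing then $k_0$ large, so that $a=2^{-\alpha k}$ is small, makes $Ca^{1+\beta}<ca\delta_0^{-1-s}/2$, which gives the contradiction. The main obstacle is the bookkeeping that transfers Theorem~\ref{th::EL} to the touching point at the right scale. We must check that the excised ball $B_{r^{1+\epsilon/2}}$ after rescaling is $o(\delta_0)$, so that it does not swallow the positive contribution. We must also check that the curvature bound $r^{-s+\epsilon(1-s)/2}$ translates into $o(a)$. This is where the restriction $\alpha<\epsilon/(2+\epsilon)$ must be used, and I would first verify it by matching the exponents $r\sim a^{-1}$, $\xi$ versus $\delta_0$, and the $2^{-k\gamma}\Lambda$ scaling.
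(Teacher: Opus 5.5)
The step that fails as written is your case distinction. Which half of $C_{\delta_0,a}$ carries the $E$-mass does not decide which conclusion holds. What decides it is which side of the slab $E$ occupies. Sliding a paraboloid from below \emph{inside} $E$, so that Theorem~\ref{th::EL} applies to $E$ at the touching point, requires $C_1\cap\{x_n<-a\}\subseteq E$.

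Take $E=\{x_n>-a/2\}$ as an example. It satisfies \eqref{eq::flatness_harnack_lemma} and \eqref{eq::half_measure_cylinder}, and its lower half of $C_{\delta_0,a}$ carries a quarter of $|C_{\delta_0,a}|$. If you pick that half, your barrier from below lies in $E^c$, so neither the sliding argument nor Theorem~\ref{th::EL} works as you use them. Moreover, your target \eqref{eq::inclusion_harnack1} is false there.

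The paper splits geometrically instead. Since $\partial E\cap C_1\subseteq\{|x_n|<a\}$, each of the connected sets $C_1\cap\{x_n<-a\}$ and $C_1\cap\{x_n>a\}$ lies in $E$ or in $E^c$. If $E\supseteq C_1\cap\{x_n<-a\}$, one proves \eqref{eq::inclusion_harnack1}. If $E\supseteq C_1\cap\{x_n>a\}$, the reflection $x_n\mapsto -x_n$ reduces to the same argument and gives \eqref{eq::inclusion_harnack2}. In each case \eqref{eq::half_measure_cylinder} is used in full. The paraboloid occupies only $O(a\delta_0^{n+1})$ of $C_{\delta_0,a}$, so $|(E\setminus P)\cap C_{\delta_0,a}|\ge c\,a\delta_0^{n-1}$ wherever that mass sits, and your reduction to one half is unnecessary.

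With that fixed, the rest is the paper's proof, and the bookkeeping you postpone does close.
\begin{itemize}
\item \emph{Scaling.} $2^{-k}E$ is $(\Lambda,\gamma)$-almost minimal in $B_1$ with the \emph{same} $\Lambda$. What becomes small is the touching radius $r\simeq 2^{-k(1-\alpha)}$, not the constant, and that is exactly the regime in which Theorem~\ref{th::EL} is stated. Scaling back, the excision radius is $\rho=2^{-\frac k2(\epsilon-\epsilon\alpha-2\alpha)}$ and
$H^\rho_s[E](y)\le c\,2^{-\frac k2(\epsilon(1-s)(1-\alpha)+2\alpha s)}=c\,a\,2^{-\frac k2(1-s)(\epsilon-\epsilon\alpha-2\alpha)}$. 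Both are small precisely when $\alpha<\frac{\epsilon}{2+\epsilon}$.
\item \emph{Tail.} No shrinking of the $\alpha$-range is needed. With $\epsilon$ as in \eqref{eq::epsilon_choice} one has $\epsilon<\gamma/n\le s/n$, hence $\frac{\epsilon}{2+\epsilon}<\frac s2$, and even your lossy sum $\sum_j 2^{j(2\alpha-s)}$ converges.
\item \emph{Region bookkeeping.} The extra-mass term lives in the same region as the paraboloid term. Write $\chi_E-\chi_{E^c}=(\chi_P-\chi_{P^c})+2\chi_{E\setminus P}$ on $B_{1/2}(y)\setminus B_\rho(y)$, as the paper does. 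The set $C_{\delta_0,a}$ is not in your ``remaining part''.
\item \emph{Paraboloid error.} Over scale $\delta_0$ it is $-Ca\delta_0^{1-s}$, not $-Ca\delta_0^{2-s}$. This is harmless.
\item \emph{Excised mass.} Your criterion $\rho=o(\delta_0)$ is the right one, provided you bound the excised mass by the slab estimate $|B_\rho(y)\cap C_{\delta_0,a}|\le 2\omega_{n-1}\rho^{n-1}a$. The paper's cruder bound $|B_\rho|=\omega_n\rho^n$ is $\ll a\delta_0^{n-1}$ only under the stronger restriction $\alpha<\frac{n\epsilon}{n\epsilon+2n+2}$.
\end{itemize}
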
  
		
		\begin{proof}
			Let~$k_0=k_0(\delta_0)$ be such that~$a\leq 2^{-k_0\alpha}<\delta_0$.
			
			From~\eqref{eq::flatness_harnack_lemma}, we have $\partial E\cap C_1 \subseteq \{|x_n|<a\}$. We claim that if~$\{x_n<-a\}\cap C_1 \subseteq E$, then~\eqref{eq::inclusion_harnack1} holds true.
			
			Suppose by contradiction that this does not hold, namely there exists a portion of~$\partial E \cap C_{\delta_0}$ trapped in the strip~$\{-a<x_n<-(1-\delta_0^2)a\}$.
			
			Then, we find a parabola of opening~$-\frac{a}{2}$ touching~$\partial E$ from below in~$C_{\delta_0}$.Let~$y\in C_{\delta_0}$ be the tangent point, so that 
			$$ -a\leq y_n \leq -(1-\delta_0^2)a.$$
			
			Now, we estimate  the fractional mean curvature of~$E$ at~$y$. Taking~$\delta_0<1/2$, we bound the far-away contributions as
			\begin{equation} \label{eq::harnack_proof1} 
				\begin{split}
					&	\left| \int_{B_{1/2}^c(y)}  \frac{\chi_E(x)-\chi_{E^c}(x)}{|x-y|^{n+s}}\,dx  \right|
					\\	\leq& \sum_{j=0}^{k} 	\left| \int_{B_{2^j}(y)\setminus B_{2^{j-1}}(y)}  \frac{\chi_E(x)-\chi_{E^c}(x)}{|x-y|^{n+s}}\,dx  \right| 
					+ \left| \int_{B_{2^k}^c(y)}  \frac{\chi_E(x)-\chi_{E^c}(x)}{|x-y|^{n+s}}\,dx  \right| \\
					\leq& \sum_{j=0}^{k} 	\left| \int_{B_{2^j}(y)\setminus B_{2^{j-1}}(y)}  \frac{\chi_E(x)-\chi_{E^c}(x)}{|x-y|^{n+s}}\,dx  \right| 
					+ \omega_n\int_{2^k}^{+\infty}  \frac{dr}{r^{1+s}} . 
				\end{split}
			\end{equation}
			
			Thanks to the flatness assumptions~\eqref{eq::flatness_harnack_lemma}, we have 
			\begin{equation*}
				\partial E\cap B_{2^j}(y) \subseteq \partial E\cap C^{\nu_{j+1}}_{2^{j+1}} \subseteq \big\{|x\cdot\nu_{j+1}| \leq a 2^{(j+1)(\alpha+1)}\big\} .
			\end{equation*}
			Moreover,~$y\in C_{\delta_0}$ yields
			\begin{equation*}
				|y\cdot\nu_{j+1}|  \leq a 2^{(j+1)(\alpha+1)}.
			\end{equation*}
			Combining the last two displays, we obtain
			\begin{equation} \label{eq::flatness_harnack2}
				\partial E\cap B_{2^j}(y) \subseteq \big\{|(x-y)\cdot\nu_{j+1}| \leq 2 a 2^{(j+1)(\alpha+1)}\big\} .
			\end{equation}
			
			Let us define 
			$$ D := \left(B_{2^j}(y)\setminus B_{2^{j-1}}(y)\right) \setminus \big\{ |(x-y)\cdot \nu_{j+1}| \leq 2a2^{(j+1)(\alpha+1)} \big\} .$$
			By symmetry, we infer that
			\begin{equation*}
				\int_{D} \frac{\chi_E(x)-\chi_{E^c}(x)}{|x-y|^{n+s}} dx = 0,
			\end{equation*}
			and hence
			\begin{equation} \label{eq::harnack_proof2}
				\begin{split}
					&	\left| \int_{B_{2^j}(y)\setminus B_{2^{j-1}}(y)}  \frac{\chi_E(x)-\chi_{E^c}(x)}{|x-y|^{n+s}}\,dx  \right| 
					\leq \int_{B_{2^j}(y)\setminus B_{2^{j-1}}(y)}  \frac{\chi_{\left\{|(x-y)\cdot\nu_{j+1}|\leq2 a2^{(j+1)(\alpha+1)}\right\}}(x)}{|x-y|^{n+s}}\,dx \\
					&\qquad\qquad \leq \omega_n \int_{2^{j-1}}^{2^j} \frac{4 a 2^{(j+1)(\alpha+1)}r^{n-2}}{r^{n+s}} \,dr 
					= C a \int_{2^{j-1}}^{2^j} \frac{dr}{r^{1+s-\alpha}} ,
				\end{split}
			\end{equation}
			for some~$C>0$, depending only on~$n$ and~$\alpha$.
			
			Therefore, from~\eqref{eq::harnack_proof1} and~\eqref{eq::harnack_proof2}, 
			we deduce that
			\begin{equation} \label{eq::harnack_proof3}
				\begin{split}
					&\left| \int_{B_{1/2}^c(y)}  \frac{\chi_E(x)-\chi_{E^c}(x)}{|x-y|^{n+s}}\,dx  \right|
					\leq \sum_{j=0}^{k} 	C a \int_{2^{j-1}}^{2^j} \frac{dr}{r^{1+s-\alpha}} + \omega_n\int_{2^k}^{+\infty}  \frac{dr}{r^{1+s}}    \\
					&\qquad\leq C a \int_{1/2}^{+\infty} \frac{dr}{r^{1+s-\alpha}}+ \frac{\omega_n}{s}2^{-sk} 
					\leq c_1 \big(a+2^{-\alpha k}\big) \leq c_1 a,
				\end{split}
			\end{equation}
			for some~$c_1>0$ depending only on~$n$, $s$, and~$\alpha$.
			
			Now, let us denote by~$P$ the subgraph of the tangent parabola~$x_n=y_n-\frac{a}{2}|x'-y'|^2$. Let us also consider $\rho:=2^{-\frac{k}{2}(\epsilon-\epsilon\alpha-2\alpha)}$, and notice that $\epsilon-2\alpha-\epsilon\alpha>0$ by assumption. Then,
			\begin{equation} \label{eq::harnack_proof4}
				\begin{split}
					& \int_{B_{1/2}(y)\setminus B_{\rho}(y)} \frac{\chi_E(x)-\chi_{E^c}(x)}{|x-y|^{n+s}}\,dx  \\
					&\qquad\qquad\geq \int_{B_{1/2}(y) \setminus B_{\rho}(y)} \frac{\chi_{E\setminus P}(x)}{|x-y|^{n+s}}\,dx
					+ \int_{B_{1/2}(y)} \frac{\chi_{P}(x)-\chi_{P^c}(x)}{|x-y|^{n+s}}\,dx =:\mathcal{I}_1+\mathcal{I}_2.
				\end{split}
			\end{equation}
			We stress that, since~$P$ is smooth, both $\mathcal{I}_1$ and $\mathcal{I}_2$ are well-defined. Now, we provide estimates for the right-hand side of~\eqref{eq::harnack_proof4}, starting from $\mathcal{I}_1$.

			Since~$y\in C_{\delta_0}\cap\left\{-a\leq x_n\leq -(1-\delta_0^2)a\right\}$, up to taking~$\delta_0$ (and hence~$a$) small enough, we have that~$C_{\delta_0,a}\subseteq B_{1/2}(y)$, and
			\begin{equation} \label{eq::harnack_proof5}
				|x-y|^{-n-s}\geq (3\delta_0)^{-n-s},
			\end{equation}
			for every~$x\in C_{\delta_0,a}$.
			
			By the density assumption in~\eqref{eq::half_measure_cylinder} and the coice of $\rho$, we also have that
			\begin{equation*}
				\begin{split}
					&|(E\setminus P)\cap B_\rho(y)\cap C_{\delta_0,a}| = |E\cap C_{\delta_0,a}|-|P\cap C_{\delta_0,a}| \\
					&\qquad\geq \frac{1}{2}|C_{\delta_0,a}|-|B_\rho(y)|-\left|B^{n-1}_{\delta_0}\times\{-a\leq x_n\leq -(1-\delta_0^2)a\}\right|\\
					&\qquad =\omega_{n-1}\delta_0^{n-1}a(1-\delta_0^2)-\omega_n\rho^n \geq \frac{1}{4}\omega_{n-1}\delta_0^{n-1}a.
				\end{split}
			\end{equation*}
			whenever~$\delta_0^2<1/2$.
			Consequently, using this together with~\eqref{eq::harnack_proof5}, we obtain that
			\begin{equation} \label{eq::harnack_proof6}
				\mathcal{I}_1 \geq \int_{C_{\delta_0,a}}\frac{\chi_{E\setminus P}(x)}{|x-y|^{n+s}}\,dx
				\geq \frac{|(E\setminus P)\cap C_{\delta_0,a}|}{(3\delta_0)^{n+s}}
				\geq c_2a\delta_0^{-1-s},
			\end{equation}
			for some~$c_2>0$ depending on~$n$ and~$s$.
			
			In addition, since~$P$ is smooth, we have (see for instance \cite{MR3107529, MR4279395})
			\begin{equation} \label{eq::harnack_proof7}
				\begin{split}
					&\mathcal{I}_2
					\geq c_3 \int_{0}^{1/2} r^{n-2} \left(\int_{0}^{-\frac{a}{2}r^2}\frac{dt}{(r^2+t^2)^{\frac{n+s}{2}}}\right) \,dr 
					\geq -c_3a,
				\end{split}
			\end{equation}
			for some~$c_3>0$ depending on~$n$ and~$s$, and possibly changing from line to line.
			
			Then, plugging~\eqref{eq::harnack_proof6} and~\eqref{eq::harnack_proof7} into~\eqref{eq::harnack_proof4}, we obtain
			\begin{equation}\label{eq::harnack_proof8}
				\int_{B_{1/2}(y)\setminus B_{\rho}(y)}  \frac{\chi_E(x)-\chi_{E^c}(x)}{|x-y|^{n+s}}\,dx \geq (c_2-c_3\delta_0^{1+s})a\delta_0^{-1-s} \geq c_4 a\delta_0^{-1-s},
			\end{equation}
			for some~$c_4>0$ depending on~$n$, $s$, and~$\alpha$.
			
			On the other hand, if~$E$ is a~$(2^{-k\gamma}\Lambda,\gamma)$-almost minimal set in~$B_{2^k}$, then~$2^{-k}E$ is a~$(\Lambda,\gamma)$-almost minimal set in~$B_{1}$. Moreover, there exists a parabula of opening $-2^{k-1}a=-2^{-1+k(1-\alpha)}$ touching $\partial(2^{-k}E)$ at~$2^{-k}y$. Let $r:=2^{-k(1-\alpha)}$ and $\zeta:=r^{1+\epsilon/2}=2^{-k}\rho$. Thus, by Theorem~\ref{cor::EL2}, we obtain that
			\begin{equation*}
				H_s^{\zeta}[2^{-k}E](2^{-k}y) \leq cr^{-s+\epsilon\frac{1-s}{2}} = c2^{k(1-\alpha)\left(s-\epsilon\frac{1-s}{2}\right)}, 
			\end{equation*}
			and hence
			\begin{equation} \label{eq::lower_bound_curv_rescaled}
				H_s^{\rho}[E](y) 
				= 2^{-ks}H_s^{\zeta}[2^{-k}E](2^{-k}y) \leq c2^{-\frac{k}{2}(\epsilon(1-s)-\epsilon\alpha(1-s)+2\alpha s)}.
			\end{equation}
			
			Therefore, from~\eqref{eq::harnack_proof3}, \eqref{eq::harnack_proof8}, and~\eqref{eq::lower_bound_curv_rescaled}, we infer that
			\begin{equation*} \label{eq::harnack_proof9}
				\big(c_4-c_1\delta_0^{1+s}\big)a\delta_0^{-1-s} = \big(c_4-c_1\delta_0^{1+s}\big)2^{-k\alpha}\delta_0^{-1-s} \leq c2^{-\frac{k}{2}(\epsilon(1-s)-\epsilon\alpha(1-s)+2\alpha s)}.
			\end{equation*}
			In particular, up to renaming the constants, we have
			\begin{equation*}
				\delta_0^{-1-s} \leq c 2^{-k\frac{1-s}{2}(\epsilon-\alpha\epsilon-2\alpha)},
			\end{equation*}
			with $\epsilon-\epsilon\alpha-2\alpha>0$. 
			
			However, this leads to a contradiction whenever we take~$k$ large enough and~$\delta_0$ small enough.
			\smallskip
			
			Furthermore, if~$\{x_n>a\}\cap C_1 \subseteq E$, up to considering the
			supergraph~$\widetilde{P}$ of the parabola~$x_n=y_n+\frac{a}{2}|x'-y'|^2$ tangent to~$E$ at some point~$y$ from above, the foregoing computations show that~\eqref{eq::inclusion_harnack2} holds true, concluding the proof.
		\end{proof}
		
		\begin{proof}[Proof of Theorem~\ref{th::harnack_ineq}]
			Suppose that~$|E\cap C_{\delta_0,a}|\geq \frac{1}{2}|C_{\delta_0,a}|$. Since~$E$ is~$(2^{-k\gamma}\Lambda,\gamma)$-minimal in~$B_{2^k}$, thanks to Lemma~\ref{lemma::partial_harnack}, we have either $C_{\delta_0}\cap\{x_n<-a(1-\delta_0^2)\}\subseteq E$ or~$C_{\delta_0}\cap\{x_n>a(1-\delta_0^2)\}\subseteq E$, from which it follows 
			
			\begin{align*}
				\mbox{either}\quad& \partial E\cap C_{\delta_0} \subseteq \big\{x_n\geq a(\delta_0^2-1)\big\}\\*
				\mbox{or}\quad& \partial E\cap C_{\delta_0} \subseteq \big\{x_n\leq a(1-\delta_0^2)\big\},
			\end{align*}
			respectively.
			
			On the other hand, if~$|E\cap C_{\delta_0,a}|< \frac{1}{2}|C_{\delta_0,a}|$, then necessarily~$|E^c\cap C_{\delta_0,a}|\geq \frac{1}{2}|C_{\delta_0,a}|$.
			
			Moreover, since $E$ is an almost minimal, we have that also~$E^c$ is almost minimal. Thus, using Lemma~\ref{lemma::partial_harnack} with~$E^c$, we deduce again that
			
			\begin{align*}
				\mbox{either}\quad& \partial E\cap C_{\delta_0} \subseteq \big\{x_n\geq a(\delta_0^2-1)\big\}\\
				\mbox{or}\quad& \partial E\cap C_{\delta_0} \subseteq \big\{x_n\leq a(1-\delta_0^2)\big\},
			\end{align*}
			as desired.
		\end{proof}
		
		An  iterative application of Theorem~\ref{th::harnack_ineq} leads to the following:
		\begin{lemma}[{H\"older estimates, \cite[Lemma 2.20]{MR5015001}}] \label{lemma::holder_est}
			Let~$\{E_k\}_k$ be a sequence of~$(2^{-k\gamma}\Lambda,\gamma)$-almost minimal sets. 
			
			Suppose that, for every $k\in\N$, there exists~$h_k\in\N$ such that
			\begin{equation} \label{eq::flatness_Ek}
				\partial E_k\cap C_{2^h} \subseteq \big\{|x\cdot\nu_h^k|\leq a_k2^{h(1+\alpha)}\big\},\quad{\mbox{for all }}h\in\{ 0,\dots, h_k\} ,
			\end{equation} 
			with~$\nu_0^k=e_n$ for every~$k$. 
			
			For any~$r\geq1$, let
			$$ A_k^r:=\big\{(x',x_n/a_k)\;{\mbox{ s.t. }}\; (x',x_n)\in\partial E_k\cap C_r\big\} \quad\text{ for every } k\in\N.$$
			Assume that there exists a positive constant~$C$ independent of~$k$ such that~$A_k^r\subseteq \{|x_n|\leq C\}$, for all~$k\in\N$.
			
			Then, there exists~$k_0\in\N$, depending only on~$n$,~$s$,~$\Lambda$,~$\gamma$, and~$\alpha$, such that, for all~$ k\geq k_0$, 
			\begin{equation}\label{eq::holder_est}
				A_k^r\cap\{|x'|<r/2\}\subseteq \subgr(g_k^+)\setminus \subgr(g_k^-),
			\end{equation}
			where~$g_k^-$ and~$g_k^+$ are H\"older continuous functions with modulus of continuity~$\theta t^\eta$, for some~$\eta\in(0,1)$ and~$\theta>0$ depending only on~$C$ above and~$\delta_0$ from Theorem~\ref{th::harnack_ineq}.
		\end{lemma}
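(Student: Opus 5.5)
We prove Lemma~\ref{lemma::holder_est} by iterating Theorem~\ref{th::harnack_ineq} at dyadically shrinking scales around every point of $A_k^r$, as in the classical De Giorgi oscillation-decay argument. Fix $x_0\in \partial E_k\cap C_{r/2}$ and translate it to the origin. Using the rescaling invariance of the class of almost minimal sets, the family $\{E_k\}$ is stable under translations and dilations: if $E$ is $(2^{-k\gamma}\Lambda,\gamma)$-almost minimal in $B_{2^k}$, then $\lambda^{-1}(E-x_0)$ is again almost minimal, with constant $\lambda^{\gamma}2^{-k\gamma}\Lambda$ in a correspondingly larger or smaller ball. The flatness hypothesis~\eqref{eq::flatness_Ek} transported to $x_0$ gives that $\partial E_k$ is trapped in slabs of width comparable to $a_k2^{h(1+\alpha)}$ in cylinders of radius $2^h$ around $x_0$. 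The normals change from $\nu_h$ to $\nu_{h+1}$ by at most $Ca_k2^{h\alpha}$, and the assumption $A_k^r\subseteq\{|x_n|\le C\}$ absorbs the vertical offset of $x_0$ into the constant.

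The core step is an oscillation decay. Suppose that in the cylinder $C_\rho(x_0)$ the set $\partial E_k$ lies in a slab $\{|x_n-b|\le a_k\omega\}$, where $\rho\le r$ and $\omega$ is measured in the vertically rescaled variables. Rescale $C_\rho(x_0)$ to $C_1$. The rescaled set then satisfies~\eqref{eq::flatness_harnack} with $a=a_k\omega\rho^{-1}$ at scale one, and at the larger scales $2^j$ the flatness is inherited from~\eqref{eq::flatness_Ek}, since $\omega$ is at most the initial bound $C$. Theorem~\ref{th::harnack_ineq} then yields that in $C_{\delta_0\rho}(x_0)$ the set $\partial E_k$ lies in a slab of width $(1-\delta_0^2/2)$ times the previous one. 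Iterating, in $C_{\delta_0^m\rho}$ the vertically rescaled oscillation is at most $C(1-\delta_0^2/2)^m$. This is exactly a H\"older modulus $\theta t^\eta$ with $\eta=\log(1-\delta_0^2/2)/\log\delta_0$ and $\theta$ depending on $C$ and $\delta_0$.

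The restriction is that the iteration may continue only while the rescaled flatness parameter $a=a_k\omega(\delta_0^m\rho)^{-1}\cdots$ stays admissible, that is, while the hypothesis $a\le 2^{-\alpha k_0}$ of Theorem~\ref{th::harnack_ineq} still holds. This is the main obstacle. Below the scale where $(1-\delta_0^2/2)^m\sim \delta_0^{m}\,a_k^{-1}$-type thresholds are reached, we cannot iterate further. The fix, following \cite[Lemma 2.20]{MR5015001}, is to stop the iteration at a scale $\delta_0^{m_k}$ with $m_k\to\infty$ as $k\to\infty$ (equivalently, as $a_k\to0$). The H\"older bound then holds for all pairs of points at distance at least $\delta_0^{m_k}$, and below that scale the oscillation is trivially bounded by the last stopped value $C(1-\delta_0^2/2)^{m_k}$, which is itself at most $\theta t^\eta$ at distance $t=\delta_0^{m_k}$. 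Choosing $k_0$ so large that $a_k$ satisfies the smallness required in Theorem~\ref{th::harnack_ineq} for all $k\ge k_0$ makes this work.

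Finally, for each $x'\in B^{n-1}_{r/2}$ we define $g_k^+(x')$ and $g_k^-(x')$ as the supremum and infimum, respectively, of the vertically rescaled heights of $\partial E_k$ above $x'$. These functions may differ, since $\partial E_k$ need not be a graph, but the Harnack iteration controls their oscillation simultaneously: their difference at nearby points is at most $\theta t^\eta$. Replacing them by their upper and lower H\"older envelopes, namely $\inf_{z'}(g^+_k(z')+\theta|x'-z'|^\eta)$ and the analogous supremum for $g_k^-$, gives genuinely $\theta t^\eta$-continuous functions with $A_k^r\cap\{|x'|<r/2\}\subseteq\subgr(g_k^+)\setminus\subgr(g_k^-)$, which is~\eqref{eq::holder_est}.
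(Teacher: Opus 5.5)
Your proposal takes the same route as the paper. Theorem~\ref{th::harnack_ineq} is iterated around each $x_0\in A_k^r$ on geometrically shrinking cylinders. The iteration stops at a depth $j_k\to+\infty$, set by the smallness requirement on the rescaled flatness parameter, and below that depth the oscillation is frozen at $b_k=(1-\delta_0^2/2)^{j_k}$. Finally, $g_k^\pm$ are taken as envelopes of the resulting H\"older barriers anchored at points of $A_k^r$, which are the paper's $f_k^\pm[x_0]$.

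Two details need correcting.

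\emph{Hypotheses at a generic step.} Between the current radius $\rho$ and $1$, the flatness that Theorem~\ref{th::harnack_ineq} requires comes from the nested slabs of the previous steps, not from \eqref{eq::flatness_Ek}. The rescaled set is admissible, with flatness at all dyadic scales and almost minimality with constant $2^{-k'\gamma}\Lambda$ in $B_{2^{k'}}$ where $2^{-\alpha k'}=a_k\omega/\rho$. What makes this work is the lower bound $\omega\gtrsim\rho^{1+\alpha}$, i.e.\ $1-\delta_0^2/2\geq\delta_0^{1+\alpha}$, and not the upper bound $\omega\leq C$.

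\emph{The envelope needs the floor.} You only have the modulus $\theta\max\{t,b_k\}^\eta$, not $\theta t^\eta$. So the envelope must be $\inf_{z'}\bigl(g_k^+(z')+\theta\max\{|x'-z'|,b_k\}^\eta\bigr)$, with $z'$ ranging over projections of points of $A_k^r$. As you wrote it, the inclusion can fail below scale $b_k$.

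With that floor in place, your choice (infimum of the upper barriers, supremum of the lower ones) is actually better than the paper's displayed $\sup_{x_0}f_k^+[x_0]$ and $\inf_{x_0}f_k^-[x_0]$. Both give the inclusion \eqref{eq::holder_est}. Only yours gives $g_k^+-g_k^-=O(b_k^\eta)$, which is what Remark~\ref{rem::unif_conv_holder} later uses.
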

		
		For the convenience of the reader, a proof of Lemma~\ref{lemma::holder_est} can be found in Appendix~\ref{sec::holder_est_proof}. We point out that the argument does not dependent explicitly on $\gamma$.
		
		\begin{rem} \label{rem::unif_conv_holder}
			By inspection of the proof of Lemma~\ref{lemma::holder_est}, given any $x\in A_k^r\cap \left\{|x'|<\frac{r}{2}\right\}$, we have
			$$ 0<g_k^+(x')-g_k^-(x')\leq 2\theta b_k^\eta ,$$
			for some H\"older modulus of continuity $b_k$. Therefore, since~$b_k\to0$, as~$k\to+\infty$, we deduce from Ascoli-Arzel\`a's Theorem that, $g_k^-$ and~$g_k^+$ uniformly converge to some~$\cont^{0,\eta}$-function~$u$ (up to subsequences). Hence, $A_k^r$ converges uniformly to~$\mbox{graph}(u)$ in~$\{|x'|<r/2\}$.
		\end{rem}
		
		\subsection{Linearized equation for the H\"older limit in Lemma~\ref{lemma::holder_est}} \label{sec::linearized equation}
		
		As last ingredient needed for the proof of Proposition~\ref{prop::improv_flat_rescaled}, we establish the following partial results. We will show that the limit function $u$ from Lemma~\ref{lemma::holder_est} is $\frac{1+s}{2}$-harmonic, and consequently linear, contradicting the claim in~\eqref{eq::claim_contradiction}. The heuristics behind this argument is that the sequence of almost minimizers $\{E_k\}_k$ defined in \eqref{eq::flatness_Ek2} still satisfies a limiting equation (as $k\to+\infty$) whose linearization coincides with $\frac{1+s}{2}$-fractional Laplace equation. The statements go as follows:
		
		\begin{lemma}[Lemma 6.10, \cite{MR2675483}]\label{lemma::conv_unif}
			There exists a subsequence~$\{E_{k_j}^*\}_j$ that is uniformly convergent on every compact subset of~$\R^n$ to the subgraph of a H\"older continuous function~$u$ such that~$u(0')=0$, and for which there exists some~$c>0$ such that~$u(x')\leq c(1+|x'|^{1+\alpha})$, for every~$x'\in\R^{n-1}$.
		\end{lemma}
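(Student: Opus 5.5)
We argue as in \cite[Lemma 6.10]{MR2675483}: the idea is to combine the H\"older estimates of Lemma~\ref{lemma::holder_est} at all scales $r=2^h$, with a diagonal extraction. Fix the sequence $\{E_k\}_k$ from~\eqref{eq::flatness_Ek2}, with $a_k=2^{-k\alpha}$ and $j_k\to+\infty$; it suffices to take $j_k=k$.

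\emph{Step 1: control of the tilts.} We first check that the hypothesis of Lemma~\ref{lemma::holder_est} holds on every cylinder $C_{2^h}$ with a constant independent of $k$. Comparing consecutive flatness conditions in~\eqref{eq::flatness_Ek2}, both slabs $\{|x\cdot\nu_j^k|\leq a_k2^{j(1+\alpha)}\}$ and $\{|x\cdot\nu_{j+1}^k|\leq a_k2^{(j+1)(1+\alpha)}\}$ contain $\partial E_k\cap C_{2^j}$. This portion of boundary spreads horizontally over all of $B'_{2^j}$: by the clean ball condition (Corollary~\ref{cor::clean_ball}) and the fact that $0\in\partial E_k$ separates $E_k$ from its complement in a flat strip, there are boundary points above every $x'$. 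Hence
\begin{equation*}
|\nu_j^k-\nu_{j+1}^k|\leq C a_k 2^{j\alpha}.
\end{equation*}
Summing from $0$ to $j$, with $\nu_0^k=e_n$, we obtain $|\nu_j^k-e_n|\leq Ca_k2^{j\alpha}$. Consequently,
\begin{equation*}
\partial E_k\cap C_{2^j}\subseteq\{|x_n|\leq Ca_k2^{j(1+\alpha)}\}.
\end{equation*}
In other words, $A_k^{2^j}\subseteq\{|x_n|\leq C2^{j(1+\alpha)}\}$, uniformly in $k$, for every $j\leq j_k$.

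\emph{Step 2: H\"older graphs and compactness.} For each fixed $h$ and every $k$ with $j_k\geq h+1$ and $k\geq k_0$, Lemma~\ref{lemma::holder_est} (with $r=2^{h+1}$) traps $A_k^{2^{h+1}}\cap\{|x'|<2^h\}$ between the graphs of $g_k^\pm$. These are H\"older with modulus $\theta_h t^\eta$, and by Remark~\ref{rem::unif_conv_holder} their gap tends to $0$. By Ascoli--Arzel\`a we extract a subsequence along which $g_k^\pm\to u$ uniformly on $B'_{2^h}$. A diagonal argument over $h\in\N$ then gives a single subsequence $\{k_j\}$ and a H\"older function $u$ on $\R^{n-1}$, with $\partial E_{k_j}^*\to\mathrm{graph}(u)$ locally uniformly. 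The uniform vertical bound from Step 1 ensures that $u$ is finite and that no mass escapes.

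\emph{Step 3: identification of the limit set.} We need to show that $E_{k_j}^*$ converges locally uniformly to the subgraph of $u$, and not to its supergraph. Since $\nu_0^k=e_n$ and the boundary lies in $\{|x_n|\leq a_k\}$ inside $C_1$, the normalization implicit in~\eqref{eq::flatness_Ek2} gives $E_k\supseteq\{x_n<-a_k\}\cap C_1$; this orientation then propagates through the cylinders $C_{2^j}$. Off the thin neighbourhood of $\mathrm{graph}(u)$, each point is eventually on a fixed side of $\partial E^*_{k_j}$, and this yields uniform convergence to $\subgr(u)$ on compacts. Since $0\in\partial E_k$ and all boundaries pass through the origin, we get $u(0')=0$. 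Finally, letting $k\to\infty$ in the bound of Step 1 gives $|u(x')|\leq C2^{j(1+\alpha)}$ for $|x'|\leq 2^j$, hence $u(x')\leq c(1+|x'|^{1+\alpha})$.

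The main obstacle is Step 1: bounding the tilt of the normals uniformly requires that the boundary genuinely fills the cylinder horizontally, and this is exactly where the density estimates of Proposition~\ref{prop::UDE} are needed. The rest is a routine diagonal compactness argument.
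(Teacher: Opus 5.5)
Your proposal is correct and follows essentially the route the paper relies on, namely the argument of \cite[Lemma 6.10]{MR2675483} combined with Lemma~\ref{lemma::holder_est} and Remark~\ref{rem::unif_conv_holder}: the tilt bound $|\nu_j^k-\nu_{j+1}^k|\leq Ca_k2^{j\alpha}$ from the density/clean-ball estimates gives a uniform height bound for $A_k^{2^j}$, after which Ascoli--Arzel\`a, a diagonal extraction, the orientation, $u(0')=0$ and the growth bound follow as you describe. Two small points need tidying: the clean-ball argument, and with it the height bound in the infinite cylinder $C_{2^j}$, only works at scales where $a_k2^{j\alpha}$ is small, i.e.\ for each fixed $j$ once $k$ is large, and not for every $j\leq j_k$ (this is all you actually use); and the inclusion $\{x_n<-a_k\}\cap C_1\subseteq E_k$ does not follow from \eqref{eq::flatness_Ek2}, but is a normalization obtained by replacing $E_k$ with $E_k^c$ if needed, which is almost minimal with the same constants and has the same boundary.
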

		
		\begin{lemma}[Lemma 6.11, \cite{MR2675483}]\label{lemma::linear_viscosity}
			Let~$u$ be a function such that the sequence of almost-minimizers~$\{E_k\}_k$ converges to~$\subgr(u)$ uniformly on every compact subset of~$\R^n$, as~$k\to+\infty$.
			
			Then, 
			$$ (-\Delta)^{\frac{1+s}{2}} u=0 \text{ in }\R^{n-1},$$
			in the viscosity sense.
		\end{lemma}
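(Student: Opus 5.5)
We prove Lemma~\ref{lemma::linear_viscosity} by contradiction, in the style of \cite[Lemma 6.11]{MR2675483}. The one new point is that the exact Euler--Lagrange equation is replaced by the truncated inequalities of Theorem~\ref{th::EL} and Corollary~\ref{cor::reverseELeq}.

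\textbf{Setup.} It suffices to show that $u$ is a viscosity subsolution; the supersolution case follows by applying the same argument to $E_k^c$. Suppose a smooth test function $\varphi$ touches $u$ from above at $x_0'$; without loss of generality $x_0'=0'$. After the usual perturbation (add $\delta|x'|^2$ locally and replace $\varphi$ by $u$ outside a small ball), we may assume:
\begin{itemize}
\item $\varphi$ touches $u$ strictly from above at $0'$ in $B'_\delta$;
\item $\varphi=u$ outside $B'_{2\delta}$, so $\varphi$ has the growth $|\varphi(x')|\le c(1+|x'|^{1+\alpha})$ from Lemma~\ref{lemma::conv_unif};
\item $(-\Delta)^{\frac{1+s}{2}}\varphi(0')<-\eta<0$.
\end{itemize}

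\textbf{Touching points for $E_k$.} By the uniform convergence in Lemma~\ref{lemma::conv_unif} and Remark~\ref{rem::unif_conv_holder}, the vertically translated graphs $\{x_n=a_k(\varphi(x')+t_k)\}$ touch $\partial E_k$ from above at points $y_k$. Here $t_k\to0$ and $y_k'\to0'$. Near $y_k$, $E_k$ is touched from above by a set whose boundary has curvature of order $a_k$. In particular, a ball of radius $2r$ with $r\sim a_k^{-1}$ touches $E_k$ at $y_k$ from above. Rescaling by $2^{-k}$ as in the proof of Lemma~\ref{lemma::partial_harnack}, Corollary~\ref{cor::reverseELeq} gives
\[
H_s^{\rho_k}[E_k](y_k)\ge -c\,2^{-\frac{k}{2}(\epsilon(1-s)-\epsilon\alpha(1-s)+2\alpha s)}=o(a_k),
\]
with $\rho_k\to0$. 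The $o(a_k)$ holds because $\alpha<\epsilon/(2+\epsilon)$, exactly as in the final inequality of that proof.

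\textbf{Upper bound from the touching set.} Split the curvature integral into $B_{\rho_k}^c(y_k)\cap B_R(y_k)$ and the tail.
\begin{itemize}
\item Inside $B_R$: since $E_k$ lies below the graph $F_k$ of $a_k(\varphi+t_k)$ near $y_k$, we have $\chi_{E_k}-\chi_{E_k^c}\le\chi_{F_k}-\chi_{F_k^c}$. The standard expansion of the $s$-curvature of a flat subgraph (cf. \cite[Section~6]{MR2675483}) gives
\[
\int_{B_R(y_k)\setminus B_{\rho_k}(y_k)}\frac{\chi_{F_k}-\chi_{F_k^c}}{|x-y_k|^{n+s}}dx = c\,a_k\,(-\Delta)^{\frac{1+s}{2}}_{R}\varphi(y_k')+o(a_k),
\]
where $(-\Delta)^{\frac{1+s}{2}}_{R}$ is the operator truncated at radius $R$. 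The part of the integral over $B_{\rho_k}$ is of size $a_k\rho_k^{1-s}=o(a_k)$.
\item Tail: $|x|\ge R$ is controlled using the dyadic flatness \eqref{eq::flatness_Ek2} as in \eqref{eq::harnack_proof3}, giving a bound $C a_k R^{\alpha-s}$ plus the contribution coming from $u$ on annuli. Since the rescaled boundaries converge to the graph of $u=\varphi$ on compact sets, the tail is $c\,a_k$ times the tail of the operator applied to $\varphi$, up to $o(a_k)+Ca_kR^{\alpha-s}$.
\end{itemize}
Combining, dividing by $a_k$, and letting $k\to\infty$ and then $R\to\infty$ yields
\[
0\le c\,(-\Delta)^{\frac{1+s}{2}}\varphi(0')<-c\eta,
\]
a contradiction.

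\textbf{Main obstacle.} The delicate step is the first bullet. We must check that excising $B_{\rho_k}(y_k)$ costs only $o(a_k)$: using the $C^2$ bound for the touching graph, the excised piece is bounded by $a_k\rho_k^{1-s}$, as in \eqref{eq::EL_upper_bound2}. We must also check that the error $o(a_k)$ from Theorem~\ref{th::EL} really beats $a_k$, which is exactly where the constraint on $\alpha$ enters. The tail comparison also requires care: uniform convergence holds only on compacts, so the region $|x|\ge R$ must be handled through the $|x'|^{1+\alpha}$ growth with $\alpha<s$.
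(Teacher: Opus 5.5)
Your strategy is essentially the paper's. Both arguments use the same four ingredients:
\begin{itemize}
\item a barrier touching $\partial E_k$ near the contact point;
\item the rescaled small-scale Euler--Lagrange inequality (Theorem~\ref{th::EL} or Corollary~\ref{cor::reverseELeq}), whose error is $o(a_k)$ precisely because $\alpha<\frac{\epsilon}{2+\epsilon}$;
\item uniform convergence on compact sets in the intermediate region;
\item the dyadic flatness for the tail.
\end{itemize}
The paper argues directly rather than by contradiction. For the supersolution case it uses a parabola of opening $-a_k/2$ as barrier only on $D_\delta(x_k)\setminus B_\rho(x_k)$, pays $Ca_k\delta^{1-s}$ there, and then lets $\eta\to0$, $\delta\to0$, $R\to\infty$. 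You instead use the test function itself as barrier down to $B_{\rho_k}$. This difference is cosmetic.

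There are, however, two errors to repair.

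\textbf{Signs.} For $F_k=\mathrm{subgr}\big(a_k(\varphi+t_k)\big)$ the flat expansion reads
\[
\int\frac{\chi_{F_k}-\chi_{F_k^c}}{|x-y_k|^{n+s}}\,dx\approx 2a_k\int\frac{\varphi(x')-\varphi(y_k')}{|x'-y_k'|^{n+s}}\,dx'=-c\,a_k(-\Delta)^{\frac{1+s}{2}}\varphi(y_k'),\qquad c>0.
\]
(A concave $\varphi$ has a convex subgraph, whose $s$-curvature is negative.) Moreover, the negation of the subsolution property for a test function touching from above is $(-\Delta)^{\frac{1+s}{2}}\varphi(0')>\eta>0$, not $<-\eta$. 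As written, your two sign errors cancel, but the statement you actually derive is false. Take $u$ linear and $\varphi=u+M|x'|^2$ near $0'$, with $\varphi=u$ elsewhere: $\varphi$ touches $u$ from above and has negative $(-\Delta)^{\frac{1+s}{2}}\varphi(0')$, which your conclusion would exclude. With both signs fixed, the chain
\[
-o(a_k)\le H_s^{\rho_k}[E_k](y_k)\le -c\,a_k(-\Delta)^{\frac{1+s}{2}}\varphi(y_k')+o(a_k)+O(a_kR^{\alpha-s})
\]
gives the contradiction.

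\textbf{Localization of the touching.} Since $\varphi=u$ outside $B'_{2\delta}$ and $\partial E_k$ is only $a_k\eta$-close to $\mathrm{graph}(a_ku)$, the translated graph must be lowered only until first contact inside $C_\delta$. This is what yields $y_k'\to0'$. Consequently $E_k\subseteq F_k$ holds only in $C_\delta$, not on all of $B_R(y_k)$ as your first bullet asserts. On $B_R(y_k)\setminus C_\delta$ you must instead compare $E_k$ with $\mathrm{subgr}(a_ku)$ through the uniform convergence. This costs $O(\eta a_k)$, with constants depending on $\delta$ and $R$, exactly as in \eqref{eq::lin_visc1}. With that replacement the argument goes through.
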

		
		\begin{lemma}[Proposition 6.7, \cite{MR2675483}] \label{lemma::CRS_harmonic_implies_linear}
			If~$(-\Delta)^{\frac{1+s}{2}} u=0$ in~$\R^{n-1}$ in the viscosity sense, and 
			$$|u(x)|\leq c\big(1+|x|^{1+\alpha}\big),$$ 
			then~$u$ is linear.
		\end{lemma}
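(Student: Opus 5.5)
We prove the Liouville-type statement of Lemma~\ref{lemma::CRS_harmonic_implies_linear} by combining interior regularity for the fractional Laplacian of order $\sigma:=\frac{1+s}{2}\in(\frac12,1)$ with a scaling argument. The growth condition $|u(x)|\leq c(1+|x|^{1+\alpha})$ is compatible with the operator as long as $1+\alpha<2\sigma=1+s$, i.e.\ $\alpha<s$. This is the regime of interest, since $\alpha$ is taken smaller than $\epsilon_0\leq s$. Under this condition the tail $\int |u(y)|(1+|y|)^{-(n-1)-2\sigma}\,dy$ is finite, so $(-\Delta)^{\sigma}u$ is well defined in the viscosity sense on every ball.

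\textbf{Step 1: rescaling.} For $R\geq1$, set
\[
u_R(x):=R^{-(1+\alpha)}u(Rx).
\]
Each $u_R$ is again $\sigma$-harmonic in the viscosity sense, by the homogeneity of the operator. It satisfies $|u_R(x)|\leq 2c(1+|x|^{1+\alpha})$ uniformly in $R\geq1$, so the weighted tail norms of the $u_R$ are bounded independently of $R$.

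\textbf{Step 2: interior $\cont^{1,\beta}$ estimate.} We invoke the interior regularity theory for viscosity solutions of translation-invariant nonlocal equations of order $2\sigma>1$ (Caffarelli--Silvestre). Viscosity solutions of $(-\Delta)^\sigma v=0$ in $B_1$ lie in $\cont^{1,\beta}(B_{1/2})$ for some $\beta>0$, with
\[
\norm{v}_{\cont^{1,\beta}(B_{1/2})}\leq C\Big(\norm{v}_{L^\infty(B_1)}+\int_{\R^{n-1}}\frac{|v(y)|}{(1+|y|)^{n-1+2\sigma}}\,dy\Big).
\]
Applying this to $u_R$ gives $[\nabla u_R]_{\cont^{0,\beta}(B_{1/2})}\leq C$ with $C$ independent of $R$.

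\textbf{Step 3: scaling back.} Rescaling, we obtain
\[
[\nabla u]_{\cont^{0,\beta}(B_{R/2})}\leq C R^{1+\alpha-1-\beta}=CR^{\alpha-\beta}.
\]
We may take $\beta>\alpha$: either because the regularity exponent can be pushed up to any $\beta<2\sigma-1=s$ for the fractional Laplacian, where the kernel is smooth and translation-invariant, or via the bootstrap described next. Letting $R\to\infty$ then gives $\nabla u$ constant, so $u$ is affine.

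\textbf{Main obstacle and bootstrap.} The delicate point is ensuring $\beta>\alpha$. If the general estimate only yields a small $\beta$, we bootstrap using incremental quotients. The function $w_h(x):=(u(x+h)-u(x))/|h|$ is again $\sigma$-harmonic and grows like $|x|^{\alpha}$, with $\alpha<1<2\sigma$, so its tail is finite. Applying the $\cont^{\beta}$ estimate to rescalings of $w_h$ yields $[w_h]_{\cont^{0,\beta}(B_{R/2})}\leq CR^{\alpha-\beta}$. Iterating this on higher-order quotients, for the smooth fractional Laplacian kernel, improves the regularity until the exponent exceeds the growth. Alternatively, the equivalence between viscosity and distributional solutions of $(-\Delta)^\sigma u=0$ allows a Fourier argument: $\hat u$ is a tempered distribution supported at the origin, so $u$ is a polynomial, and the growth bound $|x|^{1+\alpha}$ with $\alpha<1$ forces it to be of degree at most one.
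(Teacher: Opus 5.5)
Your argument is correct and is the standard scaling-plus-interior-regularity proof of this Liouville statement, which the paper does not reprove but takes from \cite{MR2675483}. Rescaling $u_R=R^{-1-\alpha}u(R\,\cdot)$ keeps the weighted tails uniformly bounded precisely because $\alpha<s$, a hypothesis the statement above leaves implicit but which is genuinely needed. Interior estimates for $\frac{1+s}{2}$-harmonic functions then force $\nabla u$ to be constant as $R\to+\infty$.

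The incremental-quotient bootstrap is the one soft spot: as written it only reproduces the bound $CR^{\alpha-\beta}$ without raising $\beta$. It is also unnecessary, since quoting interior $\cont^{2}$ estimates with tail for the fractional Laplacian gives directly $\norm{D^2u}_{L^\infty(B_{R/2})}\leq CR^{\alpha-1}\to0$.
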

		
		We refer to \cite{MR2675483} and \cite{MR5015001} for the proofs of Lemmata~\ref{lemma::conv_unif} and \ref{lemma::CRS_harmonic_implies_linear}, respectively. On the other hand, since few modifications are needed in the proof of Lemma~\ref{lemma::linear_viscosity}, we present the details here below.
		
		
		We prove that the function~$u$ from Lemma~\ref{lemma::holder_est} is a viscosity solution of 
		\begin{equation} \label{eq::viscosity_harmonic}
			(-\Delta)^{\frac{1+s}{2}} u=0
		\end{equation}
		in the sense of~\cite[Definition~2.2]{MR2494809}. More precisely, we show that~$u$ is both viscosity super- and subsolution of~\eqref{eq::viscosity_harmonic}.
		
		\begin{proof}
			We start showing that $u$ is a viscosity supersolution of~\eqref{eq::viscosity_harmonic}. To this purpose, let~$\psi$ be a function touching~$u$ from below at some point~$x_0$. Without loss of generality, we assume that~$x_0=0$. Moreover assume that~$\psi\in\cont^2(\overline{B}_\xi)$, for some~$\xi>0$, and~$\psi<u$ in~$B_\xi\setminus\{0\}$. 
			
			We point out that, in order to prove that~$u$ is a viscosity supersolution of~\eqref{eq::viscosity_harmonic}, it is sufficient to show that
			\begin{equation}\label{eq::lin_visc_claim}
				\int_{\R^{n-1}} \frac{u(x')-u(0')}{|x'|^{n+s}}\,d\haus{n-1}(x) \leq 0,
			\end{equation}
			since 
			\begin{equation*}
				\begin{split}
					&	 \int_{\R^{n-1}}  \frac{u(x')-u(0')}{|x'|^{n+s}}\,d\haus{n-1}(x) 
					\\&\qquad\geq \int_{B_\xi} \frac{\psi(x')-\psi(0')}{|x'|^{n+s}}\,d\haus{n-1}(x)  
					+ \int_{\R^{n-1}\setminus B_\xi} \frac{u(x')-u(0')}{|x'|^{n+s}}\,d\haus{n-1}(x) .
				\end{split}
			\end{equation*}
			Thus, we focus on proving~\eqref{eq::lin_visc_claim}.
			
			Thanks to Lemma~\ref{lemma::conv_unif}, for any given~$\eta>0$ and~$R>0$, by definition of uniform convergence, there exists~$k_\eta\in\N$ such that, for every~$k\geq k_\eta$, $\partial E_k\cap B_R$ lies in a~$a_k\eta$-neighborhood of~$\mbox{graph}(a_ku)$, where~$a_k:=2^{-\alpha k}$. 
			
			Furthermore, since $u(0')=0$ and $|\nabla u(0')|=0$, there exists a vertical translation of a parabola of opening~$-\frac{a_k}{2}$ and tangent to~$\partial E_k$ in~$B_\eta$ at some point~$x_k$.
			Besides, $x_k\to0$ and $k_\eta\to+\infty$ as~$\eta\to0$.
			
			Now, for every~$\overline{x}=(\overline{x}',\overline{x}_n)\in\R^{n-1}\times\R$ and for every~$r>0$, we define the truncated closed cylinder
			$$ D_r(\overline{x}):=\big\{x\in\R^n\;{\mbox{ s.t. }}\;|x'-\overline{x}'|\leq r,\; |x_n-\overline{x}_n|\leq r\}.$$
			Let also also consider a small $\delta>0$ and a large $R>0$.
			
			By the flatness assumptions in~\eqref{eq::flatness_Ek}, we know that
			\begin{equation*}
				\partial E_k\cap D_R(x_k)\subseteq\partial E_k\cap B_{2R}(x_k)\subseteq\big\{|x_n-x_{k,n}|\leq c_R a_k\big\},
			\end{equation*} 
			where~$c_R$ is a positive constant depending on~$R$.
			Up to taking~$k$ large enough, we assume that~$c_Ra_k<\delta/2$. Then, for every~$x\in \big(D_R(x_k)\setminus D_\delta(x_k)\big) \cap \{|x_n-x_{k,n}|\leq c_R a_k\}$,
			\begin{equation*}
				|x'-x_k'| \geq \delta^2 - |x_n-x_{k,n}|^2 \geq \delta^2 - c_R^2a_k^2 \geq \frac{3}{4}\delta^2.
			\end{equation*}
			
			Moreover, we observe that
			\begin{equation*}
				\begin{split}
					\left|\frac{1}{|x-x_k|^{n+s}}-\frac{1}{|x'-x_k'|^{n+s}}\right| 
					&= \frac{n+s}{2} \int_{0}^{1} \frac{|x_n-x_{k,n}|^2}{(|x'-x_k'|^2+t|x_n-x_{k,n}|^2)^{\frac{n+s+2}{2}}} \,dt \\
					&\leq \frac{n+s}{2} c_R^2a_k^2 \int_{0}^{1} \frac{dt}{|x'-x_k'|^{n+s+2}} .
				\end{split}
			\end{equation*}
			It follows from the last two displays that
			\begin{equation*}
				\left|\frac{1}{|x-x_k|^{n+s}}-\frac{1}{|x'-x_k'|^{n+s}}\right|  \leq Ca_k^2 ,
			\end{equation*}
			for some positive constant~$C$, depending on~$n$, $s$, $R$, and~$\delta$.
			
			Using this and the fact that~$\partial E_k$ lies in a~$a_k\eta$-neighborhood of~$a_k\mbox{graph}(u)$, we arrive at 
			\begin{equation}\label{eq::lin_visc1}
				\begin{split}
					&\int_{D_R(x_k)\setminus D_\delta(x_k)} \frac{\chi_{E_k}(x)-\chi_{E_k^c}(x)}{|x-x_k|^{n+s}}\,dx \\
					&\qquad= 2a_k\int_{B_R^{n-1}(x_k)\setminus B_\delta^{n-1}(x_k)} \frac{u(x')-u(x_k')+O(\epsilon)}{|x'-x_k'|^{n+s}}\,d\haus{n-1}(x) + O(a_k^3)\\
					&\qquad= 2a_k\int_{B_R^{n-1}(x_k)\setminus B_\delta^{n-1}(x_k)} \frac{u(x')-u(x_k')}{|x'-x_k'|^{n+s}} \,d\haus{n-1}(x) + O(\eta a_k) +O(a_k^3).
				\end{split}
			\end{equation}
			We now write the set~$D_R(x_k)\setminus D_\delta(x_k)$ as
			\begin{equation*}
				B_\delta^c(x_k)\setminus\left[D_R(x_k)^c\cup \left(D_\delta(x_k)\cap B_\delta^c(x_k)\right)\right].
			\end{equation*}
			Thus, we have
			\begin{equation} \label{eq::lin_visc2}
				\begin{split}
					&\int_{D_R(x_k)\setminus D_\delta(x_k)} \frac{\chi_{E_k}(x)-\chi_{E_k^c}(x)}{|x-x_k|^{n+s}}\,dx 
					= \int_{B_\rho^c(x_k)} \frac{\chi_{E_k}(x)-\chi_{E_k^c}(x)}{|x-x_k|^{n+s}}\,dx \\
					&\qquad\qquad - \int_{D_R^c(x_k)} \frac{\chi_{E_k}(x)-\chi_{E_k^c}(x)}{|x-x_k|^{n+s}}\,dx
					- \int_{D_\delta(x_k)\setminus B_\rho(x_k)} \frac{\chi_{E_k}(x)-\chi_{E_k^c}(x)}{|x-x_k|^{n+s}}\,dx ,
				\end{split}
			\end{equation}
			and we prove suitable estimates for the right-hand side.
			
			To this end, recalling the estimates in~\eqref{eq::harnack_proof3}, we have
			\begin{equation}\label{eq::lin_visc3}
				\begin{split}
					\left|\int_{D_R^c(x_k)} \frac{\chi_{E_k}(x)-\chi_{E_k^c}(x)}{|x-x_k|^{n+s}}\,dx\right|
					&\leq C'a_k \int_{R}^{+\infty} \rho^{\alpha-1-s}\,d\rho + \frac{\omega_n}{s}2^{-ks}\\
					&\leq C'\big(a_kR^{\alpha-s}+ a_k^{1+\mu}\big),
				\end{split}
			\end{equation}
			for some positive exponent~$\mu$, and some positive constant~$C'$,
			depending only on~$n$, $s$, and~$\alpha$, and possibly changing from line to line.
			
			To estimate the term involving $D_\delta(x_k)\setminus B_\rho(x_k)$, let~$P_k$ be the subgraph of the tangent parabola~$x_{k,n}-\frac{a_k}{2}|x'-x_k'|^2$. Since $P_k$ is smooth, the non-local mean curvature of~$P_k$ is well-defined at~$x_k$, for every~$k$. Let also $\rho:=2^{-\frac{k}{2}(\epsilon-\epsilon\alpha-2\alpha)}$. Up to taking $k$ large enough, we have that $\rho<\delta$.
			Then, arguing as in~\eqref{eq::harnack_proof7}, we obtain
			\begin{equation} \label{eq::lin_visc4}
				\begin{split}
					&\int_{D_\delta(x_k)\setminus B_\rho(x_k)} \frac{\chi_{E_k}(x)-\chi_{E_k^c}(x)}{|x-x_k|^{n+s}}\,dx 
					\geq \int_{D_\delta(x_k)\setminus B_\rho(x_k)} \frac{\chi_{P_k}(x)-\chi_{P_k^c}(x)}{|x-x_k|^{n+s}}\,dx \\
					&\qquad\qquad\geq -\int_{0}^{\delta}  \frac{Ca_kr^n}{r^{n+s}} dr 
					= -Ca_k\delta^{1-s} ,
				\end{split}
			\end{equation}
			for a positive constant~$C$ independent of~$k$.
			
			Therefore, using~\eqref{eq::lin_visc2},~\eqref{eq::lin_visc3} and~\eqref{eq::lin_visc4} in~\eqref{eq::lin_visc1}, we obtain that
			\begin{equation} \label{eq::lin_visc_xk_estimate}
				\begin{split}
					&\int_{B_R^{n-1}(x_k)\setminus B_\delta^{n-1}(x_k)} \frac{u(x')-u(x_k')}{|x'-x_k'|^{n+s}}\,d\haus{n-1}(x) \\
					& = \frac{1}{2a_k} \int_{D_R(x_k)\setminus D_\delta(x_k)} \frac{\chi_{E_k}(x)-\chi_{E_k^c}(x)}{|x-x_k|^{n+s}}\,dx +O(\eta) +O(a_k^2)\\
					& \leq 
					\frac{1}{2a_k}\int_{B_\rho^c(x_k)} \frac{\chi_{E_k}(x)-\chi_{E_k^c}(x)}{|x-x_k|^{n+s}}\,dx 
					+C\delta^{1-s}+ O(\eta) +O(a_k^{\mu})+O(R^{\alpha-s}).
				\end{split}
			\end{equation}
			
			Thanks to the assumptions on~$E_k$ and Theorem~\ref{th::EL} (see also~\eqref{eq::lower_bound_curv_rescaled}), we know that 
			\begin{equation*}
				\frac{1}{2a_k}\int_{B_\rho^c(x_k)} \frac{\chi_{E_k}(x)-\chi_{E_k^c}(x)}{|x-x_k|^{n+s}}\,dx \leq ca_k^{-1}2^{-\frac{k}{2}(\epsilon(1-s)-\epsilon\alpha(1-s)-2\alpha s)} = c2^{-k\frac{1-s}{2}(\epsilon-\epsilon\alpha-2\alpha)} .
			\end{equation*}
			
			It thereby follows that
			\begin{equation}\label{eq::lin_visc_xk_estimate_2}
				\begin{split}
					&\int_{B_R^{n-1}(x_k)\setminus B_\delta^{n-1}(x_k)} \frac{u(x')-u(x_k')}{|x'-x_k'|^{n+s}}\,d\haus{n-1}(x) \\
					&\qquad\qquad\leq c2^{-k\frac{1-s}{2}(\epsilon-\epsilon\alpha-2\alpha)} + C\delta^{1-s}+ O(\eta) +O(a_k^{\mu})+O(R^{\alpha-s}).
				\end{split}
			\end{equation}
			Since~$\alpha\in\left(0,\frac{\epsilon}{2+\epsilon}\right)$, taking the limit as~$\eta\to0$, and hence as~$k\to+\infty$, in~\eqref{eq::lin_visc_xk_estimate_2}, we infer that
			\begin{equation*}
				\int_{B_R^{n-1}\setminus B_\delta^{n-1}} \frac{u(x')-u(0')}{|x'|^{n+s}}\,d\haus{n-1}(x) \leq C\delta^{1-s}+O(R^{\alpha-s}).
			\end{equation*}
			Hence, taking the limits as~$\delta\to0$ and~$R\to+\infty$, we obtain~\eqref{eq::lin_visc_claim}.
			\smallskip
			
			A similar argument to the one above shows that~$u$ is also a viscosity subsolution of~\eqref{eq::viscosity_harmonic}. Let~$\phi$ be a~$\cont^2$-function touching~$u$ at~$0$ from above. Using the same idea in~\eqref{eq::lin_visc2} with a tangent parabola of opening~$\frac{a_k}{2}$, we deduce that
			\begin{equation*}
				\int_{D_\delta(x_k)\setminus B_\rho(x_k)} \frac{\chi_{E_k}(x)-\chi_{E_k^c}(x)}{|x-x_k|^{n+s}}\,dx \leq Ca_k\delta^{1-s} .
			\end{equation*}
			This, together with~\eqref{eq::lin_visc1}, \eqref{eq::lin_visc3}, and Corollary~\ref{cor::reverseELeq}, entails
			\begin{equation} \label{eq::lin_visc5}
				\begin{split}
					&\int_{B_R^{n-1}(x_k)\setminus B_\delta^{n-1}(x_k)} \frac{u(x')-u(x_k')}{|x'-x_k'|^{n+s}}\,d\haus{n-1}(x) \\
					& \geq 
					\frac{1}{2a_k}\int_{B_\rho^c(x_k)} \frac{\chi_{E_k}(x)-\chi_{E_k^c}(x)}{|x-x_k|^{n+s}}\,dx 
					-C\delta^{1-s}+ O(\eta) +O(a_k^{\mu})+O(R^{\alpha-s})\\
					&\geq -c2^{-k\frac{1-s}{2}(\epsilon-\epsilon\alpha-2\alpha)} - C\delta^{1-s}+ O(\eta) +O(a_k^{\mu})+O(R^{\alpha-s}).
				\end{split}
			\end{equation}
			Taking~$\eta\to0$, and consequently~$k\to+\infty$, in~\eqref{eq::lin_visc5}, we obtain that
			\begin{equation*}
				\int_{B_R^{n-1}\setminus B_\delta^{n-1}} \frac{u(x')-u(0')}{|x'|^{n+s}}\,d\haus{n-1}(x) \geq -C\delta^{1-s}+O(R^{\alpha-s}).
			\end{equation*}
			Finally, taking the limits as~$\delta\to0$ and~$R\to+\infty$, we infer that~$u$ is also a viscosity subsolution of~\eqref{eq::viscosity_harmonic}.
		\end{proof}
		
		\subsection{Proof of the flatness improvement (Proposition~\ref{prop::improv_flat_rescaled})}
		Arguing by contradiction, suppose that the conclusion in~\eqref{eq::flatness_1/2} does not hold true. Then, we construct a sequence~$\{E_k\}_k$ such that 
		\begin{equation}
			\label{iu6y54trstar463782}
			\begin{split}
				&\partial E_k\cap B_{2^j}\subseteq \big\{|x\cdot\nu_j|\leq2^{j+\alpha(j-k)}\big\},\quad {\mbox{for all }}j\in\{0,\dots, j_k\},\\
				\text{but}\quad &\partial E_k\cap B_{1/2}\not\subseteq \big\{|x\cdot\nu_{-1}|\leq2^{-1-\alpha-\alpha k}\big\}.
			\end{split}
		\end{equation}
		Thanks to Lemma~\ref{lemma::conv_unif}, we have that $\{E_k\}_k$ converges uniformly in~$B_1$ to the subgraph of some H\"older-continuous function~$u$ (up to subsequences). By Lemma~\ref{lemma::linear_viscosity}, $u$ is linear and $u(0')=0$, hence there exists $ \nu\in \S^{n-1}$ such that
		$$ E:= \subgr(u) = \{x\cdot\nu\leq0\}. $$
		Since the sequence~$\{E_k\}_k$ is uniformly convergent, $E_k$ lies in an~$\eta$-neighborhood of~$E$ in~$B_{1/2}$, for every~$k$ sufficiently large. Thus, up to making~$\eta$ sufficiently small, there exists some~$\overline{k}$ such that
		$$ \partial E_{\overline{k}}\cap B_{1/2}\subseteq \big\{|x\cdot\nu |\leq2^{-1-\alpha-\alpha \overline{k}}\big\} . $$
		However, this contradicts~\eqref{iu6y54trstar463782}, concluding the proof.

		
		\section{H\"older regularity for almost minimal sets} \label{sec::holder_almost_min}
		
		\subsection{Proof of Theorem~\ref{th::holder_reg_almost_min}} \label{sec::proof_holder_reg}
		Now, we show that every almost minimizer in~$B_1$ is a $\cont^{1,\alpha}$-surface in~$B_{1/2}$, for every~$\alpha\in(0,\epsilon_0)$, where $\epsilon_0:=\frac{\epsilon}{2+\epsilon}$ and $\epsilon$ is as in Theorem~\ref{th::EL}.
		
		To this purpose, consider~$x_0\in\partial E\cap B_{1/2}$. Let~$k_0$ be as in Theorem~\ref{th::flatness_improvement} and choose~$\eta_0<2^{-k_0(1+\alpha)}$. In this way, the improvement of flatness result (Theorem~\ref{th::flatness_improvement}) holds for the translated set~$E-x_0$. In particular, there exists a sequence~$\{\nu_j\}_j$ of unit vectors such that
		\begin{equation} \label{eq::flat_inclusions}
			\partial E\cap B_{2^{-j}}(x_0) \subseteq \big\{|(x-x_0)\cdot\nu_j|\leq 2^{-j(1+\alpha)}\big\},
		\end{equation}
		for every~$j\in\{0,\dots,k_0\}$, with~$\nu_0=e_n$ . By~\eqref{eq::flat_inclusions} and a standard geometric argument (see for instance Section G in the appendix of~\cite{MR5015001}), we obtain the estimate
		\begin{equation} \label{eq::unit_normal_estimate}
			|\nu_j-\nu_{j+1}|\leq c2^{-j\alpha}.
		\end{equation}
		Thus, the sequence~$\{\nu_j\}_j$ converges to some unit vector~$\nu(x_0)$ and
		\begin{equation*}
			\begin{split}
				&|\nu(x_0)-\nu_j|
				=\lim_{k\to+\infty}|\nu_{j+k}-\nu_{j}| \leq \lim_{k\to+\infty} \sum_{q=0}^{k-1}|\nu_{j+q+1}-\nu_{j+q}|\\
				&\qquad\qquad\leq \lim_{k\to+\infty} \sum_{q=0}^{k-1} 2^{-(j+q)\alpha} \leq c2^{-j\alpha}.
			\end{split}
		\end{equation*}
		In view of this, we obtain that
		$$ |(x-x_0)\cdot\nu(x_0)| \leq |(x-x_0)\cdot\nu_j|+|x-x_0|\,|\nu(x_0)-\nu_j|\leq c2^{-j(1+\alpha)} ,$$
		for every~$x\in\partial E\cap B_{2^{-j}}(x_0)$, namely
		\begin{equation}\label{eq::flat_nu0}
			\partial E\cap B_{2^{-j}}(x_0) \subseteq \big\{|(x-x_0)\cdot\nu(x_0)|<c2^{-j(1+\alpha)}\big\}.
		\end{equation}
		
		Now, for every points~$x_0$, $y_0\in\partial E\cap B_{1/2}$ such that~$x_0\in B_{2^{-j}}(y_0)\setminus B_{2^{-j-1}}(y_0)$, for some~$j\in\N$, we estimate the angle between the unit normals~$\nu(x_0)$ and~$\nu(y_0)$. Thanks to the flatness condition~\eqref{eq::flat_nu0}, we have that
		\begin{align*}
			\text{either }&\big\{(x-y_0)\cdot\nu(x_0)>-c2^{-j(1+\alpha)}\big\}\subseteq \big\{x\cdot\nu(y_0)<c2^{-j(1+\alpha)}\big\}\\
			\text{or }&\big\{x\cdot\nu(y_0)>-c2^{-j(1+\alpha)}\big\}\subseteq\big\{(x-y_0)\cdot\nu(x_0)<c2^{-j(1+\alpha)}\big\} \text{ in }B_{2^{-j}}(y_0).
		\end{align*}
		Therefore, arguing as for~\eqref{eq::unit_normal_estimate}, we infer the worst possible estimate
		$$ |\nu(x_0)-\nu(y_0)|\leq C2^{-j\alpha}\leq 2C|x_0-y_0|^\alpha.$$
		Hence, ~$\nu$ is H\"older continuous, which yields that~$\partial E\cap B_{1/2}$ is a~$\cont^{1,\alpha}$ surface (see for instance~\cite[Theorem~5.8]{MR0165073}).
		
		\subsection{Proof of Corollary~\ref{cor::optimal_reg}} \label{sec::better_regularity}
		
		Now, we show that with Theorem~\ref{th::holder_reg_almost_min} in hand, we are able to improve the estimate in Theorem~\ref{th::EL}, and consequently we infer a better H\"older regularity for almost minimal sets.  
		The main tool of our strategy is the following technical lemma, which provide sharper uniform density estimates for $\cont^{1,\alpha}$ sets.
		
		\begin{lemma} \label{lemma::holder_ude}
			Let $E$ be a $\cont^{1,\alpha}$-regular set in $\R^n$, for some $\alpha\in(0,\epsilon_0)$, where $\epsilon_0$ is as in Theorem~\ref{th::holder_reg_almost_min}. Suppose that the ball $B_{2r}(-2r e_n)$ touches $\partial E$ from below at the origin, for some small $r>0$, and let $A_d$ be defined as in~\eqref{eq::def_bubble} with $d=r^{1+2\epsilon}$ for some $\epsilon>0$.
			
			Then, there exists $c_0$ depending only on $n$,~$s$,~$\Lambda$, and~$\gamma$ such that
			\begin{equation} \label{eq::holder_ude}
				|A_d| \geq  c_0 r^{n(1+2\epsilon)-\frac{\alpha}{1+\alpha}(1+\epsilon)(n-1)} 
			\end{equation}
		\end{lemma}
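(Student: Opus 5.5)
We bound $|A_d|$ from below by finding an explicit region of $E^c$ that lies inside the bubble $B_r((d-r)e_n)$, using only the $\cont^{1,\alpha}$ structure of $\partial E$ near the origin instead of the density estimates of Proposition~\ref{prop::UDE}.

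\textbf{Step 1: local graph description.} Near $0$ we write $\partial E$ as a graph. Since the ball $B_{2r}(-2re_n)$ touches $\partial E$ from below at $0$, the normal to $\partial E$ at $0$ is $e_n$. By the $\cont^{1,\alpha}$ regularity there is a function $g$ with $g(0')=0$ and $\nabla g(0')=0$ such that $\partial E=\{x_n=g(x')\}$ near the origin, with $E$ lying below the graph. The seminorm $[\nabla g]_{\cont^{0,\alpha}}$ is controlled by a constant $K$ depending only on $n$, $s$, $\Lambda$, $\gamma$ (through Theorem~\ref{th::holder_reg_almost_min}). Hence
\begin{equation*}
|g(x')|\leq K|x'|^{1+\alpha}\quad\mbox{for all }|x'|\leq \rho_1 .
\end{equation*}

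\textbf{Step 2: a thin cylinder inside $E^c$.} We look for $E^c$ inside the cylinder $\{|x'|<\ell,\; g(x')<x_n<d/2\}$. It is nonempty and has height at least $d/4$ as soon as $K\ell^{1+\alpha}\leq d/4$. So we choose $\ell:=c\,d^{\frac{1}{1+\alpha}}$. We also need this cylinder to sit inside $B_r((d-r)e_n)$. At radius $|x'|=\ell$ the top of that ball is at height $\approx d-\ell^2/(2r)$, so we need $\ell^2\lesssim rd$. Since $d=r^{1+2\epsilon}$, we have $\ell^2\approx r^{\frac{2(1+2\epsilon)}{1+\alpha}}$, and this is $\leq r^{2+2\epsilon}$ whenever $\alpha$ is small compared with $\epsilon$. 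Because $\alpha<\epsilon_0=\epsilon/(2+\epsilon)$, this holds for $r$ small, possibly after shrinking $c$. If the precise exponent comparison fails, we replace $\ell$ by $\min\{\ell,\sqrt{rd}/2\}$. We expect this compatibility between the H\"older scale and the bubble curvature to be the main obstacle, and we will check the exponents carefully there.

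\textbf{Step 3: volume count.} The resulting region has volume at least
\begin{equation*}
c\,\ell^{n-1}d\approx d^{\,1+\frac{n-1}{1+\alpha}}.
\end{equation*}
We then rewrite this in the exponent form of~\eqref{eq::holder_ude}. Using $d=r^{1+2\epsilon}$ and the bookkeeping $n(1+2\epsilon)-\frac{\alpha}{1+\alpha}(n-1)(1+\epsilon)$, the surplus factor is bounded by a power of $r$ that goes the right way for small $r$. This gives $|A_d|\geq c_0 r^{n(1+2\epsilon)-\frac{\alpha}{1+\alpha}(1+\epsilon)(n-1)}$. If the exponent in the statement is meant with $\ell$ tied to the scale $\rho=r^{1+\epsilon}$, i.e.\ $\ell\approx d\,\rho^{-\alpha/(1+\alpha)}$, we instead take the cylinder's lateral size from the flatness at scale $\rho$, namely $|g|\leq K\rho^{\alpha}|x'|$ on $B'_\rho$. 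This gives $\ell^{n-1}d$ with $\ell=d\rho^{-\alpha/(1+\alpha)}$, and that reproduces exactly the stated exponent.
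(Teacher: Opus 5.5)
Your argument is correct and is essentially the paper's. The paper reduces to the worst case $\partial E=\{x_n=|x'|^{1+\alpha}\}$, finds that the graph meets the top of the bubble at $|x'|\approx d^{1/(1+\alpha)}$ (the parabola term being negligible since $\alpha<\epsilon/(1+\epsilon)$), and integrates the vertical gap there; you instead inscribe a cylinder of radius $\ell\approx d^{1/(1+\alpha)}$ and height $d/4$, and both routes give $|A_d|\gtrsim r^{(1+2\epsilon)\frac{n+\alpha}{1+\alpha}}$, which dominates the stated bound because $(1+2\epsilon)$ in place of $(1+\epsilon)$ only lowers the exponent. The hedges in Steps 2--3 are unnecessary: the $\min\{\ell,\sqrt{rd}/2\}$ fallback is never triggered since $\ell^2\ll rd$ follows from $\alpha<\epsilon/(2+\epsilon)<\epsilon/(1+\epsilon)$, and the alternative scale-$\rho$ construction is not needed.
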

		
		Arguing as in Section~\ref{sec::improvement_of_flatness}, and employing Lemma~\ref{lemma::holder_ude} in place of Proposition~\ref{prop::UDE} in the proof of Theorem~\ref{th::EL}, we infer the following:
		
		\begin{cor} \label{cor::better_reg}
			Let $\Lambda\geq0$ and $\gamma$. Let $E$ be a $\cont^{1,\alpha}$-regular $(\Lambda,\gamma)$-almost minimal set in $B_1$, for every $\alpha\in(0,\epsilon_0)$, for some $\epsilon_0\in(0,s)$.
			
			Then, $E$ is $\cont^{1,\alpha}$-regular, for any $\alpha\in(0,\epsilon_1)$, with
			\begin{equation*}
				\epsilon_1 = \frac{\gamma+\epsilon_0(n+\gamma-1)}{2n+1+s-\gamma +\epsilon_0(n+s-\gamma)}.
			\end{equation*}
		\end{cor}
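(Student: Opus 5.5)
The plan is to rerun the chain Theorem~\ref{th::EL} $\Rightarrow$ Theorem~\ref{th::harnack_ineq} $\Rightarrow$ Lemma~\ref{lemma::linear_viscosity} $\Rightarrow$ Theorem~\ref{th::holder_reg_almost_min}. The only change is that the lower bound $|A_d|\geq |E^c\cap B_d|\geq c_0 d^n$ from Proposition~\ref{prop::UDE} is replaced by the sharper bound \eqref{eq::holder_ude} of Lemma~\ref{lemma::holder_ude}. This sharper bound is available because, by hypothesis, $E$ is already $\cont^{1,\alpha}$ for every $\alpha<\epsilon_0$. We then re-optimize the free parameter $\epsilon$ in Theorem~\ref{th::EL}. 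A larger admissible $\epsilon$ widens the range $\alpha<\frac{\epsilon}{2+\epsilon}$ in the Harnack and linearization steps, and this range yields the new exponent $\epsilon_1$.

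\emph{Step 1: improved Proposition~\ref{prop::EL_partial1}.} Fix $\alpha<\epsilon_0$ and set $\beta:=\frac{\alpha}{1+\alpha}$. Repeat the calibration argument \eqref{eq::calib} with the same $\rho=r^{1+\epsilon}$ and $d=r^{1+2\epsilon}$. Suppose $H_s[E\cup A_{t_x}](x)>K r^{-s+\theta}$ for a.e.\ $x\in A_d$. Using \eqref{eq::holder_ude}, this gives
\[
\Per_s(E,B_1)-\Per_s(E\cup A_d,B_1)> K c_0\, r^{-s+\theta+n(1+2\epsilon)-\beta(1+\epsilon)(n-1)} .
\]
Comparing with the almost-minimality bound $\Lambda r^{(n-s+\gamma)(1+\epsilon)}$ forces
\[
\theta=\gamma-\epsilon(n+s-\gamma)+\beta(1+\epsilon)(n-1).
\]
So the exponent gains the extra positive term $\beta(1+\epsilon)(n-1)$. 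This produces a point $x_r\in E^c\cap B_{r^{1+\epsilon}}$ with $H_s[E\cup A_{t_r}](x_r)\le c\,r^{-s+\theta}$.

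\emph{Step 2: re-balance Proposition~\ref{prop::EL_partial2} and Theorem~\ref{th::EL}.} The parabola estimate \eqref{eq::EL_upper_bound2} is unchanged and contributes $r^{-s+\epsilon\frac{1-s}{2}}$. We now choose $\epsilon$ so that $\theta=\epsilon\frac{1-s}{2}$, which is the analogue of \eqref{eq::epsilon_choice}. This is a linear equation in $\epsilon$ (since $\beta$ is fixed), giving an explicit $\epsilon'=\epsilon'(n,s,\gamma,\beta)$ strictly larger than the old $\epsilon=\frac{2\gamma}{2n+1+s-2\gamma}$. The recentering step in the proof of Theorem~\ref{th::EL} goes through verbatim, since it only uses $\rho/\xi=r^{\epsilon/2}\to0$. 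We thus get \eqref{cor::EL2} with $\epsilon'$, and by symmetry Corollary~\ref{cor::reverseELeq} as well.

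\emph{Step 3: conclude.} The proofs of Lemma~\ref{lemma::partial_harnack}, Theorem~\ref{th::harnack_ineq}, Lemma~\ref{lemma::linear_viscosity} and Proposition~\ref{prop::improv_flat_rescaled} use the Euler--Lagrange inequality only through the condition $\epsilon-\epsilon\alpha-2\alpha>0$ and the scale $\rho=2^{-\frac k2(\epsilon-\epsilon\alpha-2\alpha)}$. Hence they hold for every $\alpha<\frac{\epsilon'}{2+\epsilon'}$. The argument of Section~\ref{sec::proof_holder_reg} then gives $\cont^{1,\alpha}$ regularity in this range. Letting $\alpha\uparrow\epsilon_0$ (so $\beta$ approaches its limiting value) and simplifying $\frac{\epsilon'}{2+\epsilon'}$ should produce the stated $\epsilon_1$.

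I expect the main obstacle to be twofold. First, the application of Lemma~\ref{lemma::holder_ude} must be uniform: its constant $c_0$ must be independent of the point and the scale. This requires quantitative $\cont^{1,\alpha}$ bounds, which we get from the improvement-of-flatness estimates \eqref{eq::flat_nu0} and not merely from qualitative regularity. Second, the final algebra must be checked carefully so that the resulting exponent matches exactly the expression for $\epsilon_1$ in terms of $\epsilon_0$. If it does not match, I would first re-examine which power of $r$ in \eqref{eq::holder_ude} is actually used.
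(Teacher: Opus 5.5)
Your strategy is the paper's own: its proof is just the sentence before the corollary, which says to rerun Section~\ref{sec::improvement_of_flatness} with Lemma~\ref{lemma::holder_ude} in place of Proposition~\ref{prop::UDE} in the proof of Theorem~\ref{th::EL}. The gap is in the computation you left as ``should produce'', which fails as written. With your Step~1 exponent $\theta=\gamma-\epsilon(n+s-\gamma)+\beta(1+\epsilon)(n-1)$, the balance $\theta=\epsilon\frac{1-s}{2}$ gives
\[
\epsilon'=\frac{2(\gamma+\beta(n-1))}{2n+1+s-2\gamma-2\beta(n-1)},\qquad \frac{\epsilon'}{2+\epsilon'}=\frac{\gamma+\beta(n-1)}{2n+1+s-\gamma-\beta(n-1)}.
\]
Letting $\beta\uparrow\frac{\epsilon_0}{1+\epsilon_0}$, this tends to $\frac{\gamma+\epsilon_0(n+\gamma-1)}{2n+1+s-\gamma+\epsilon_0(n+2+s-\gamma)}$. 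Its denominator exceeds that of the stated $\epsilon_1$ by $2\epsilon_0$, so your argument as written only proves the corollary with a strictly smaller exponent.

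Your fallback is the right fix, because the exponent displayed in \eqref{eq::holder_ude} is mis-simplified. The proof of Lemma~\ref{lemma::holder_ude} actually gives
\[
|A_d|\gtrsim d\,\bigl(d^{\frac{1}{1+\alpha}}\bigr)^{n-1}=r^{(1+2\epsilon)\frac{n+\alpha}{1+\alpha}}=r^{n(1+2\epsilon)-\beta(1+2\epsilon)(n-1)}.
\]
Hence $\theta=\gamma-\epsilon(n+s-\gamma)+\beta(1+2\epsilon)(n-1)$, and the same balance yields
\[
\epsilon'=\frac{2(\gamma+\beta(n-1))}{2n+1+s-2\gamma-4\beta(n-1)},\qquad \frac{\epsilon'}{2+\epsilon'}=\frac{\gamma+\beta(n-1)}{2n+1+s-\gamma-3\beta(n-1)}\longrightarrow\frac{\gamma+\epsilon_0(n+\gamma-1)}{2n+1+s-\gamma+\epsilon_0(4-n+s-\gamma)}.
\]
This equals the stated $\epsilon_1$ for $n=2$ and exceeds it for $n\geq 3$, so the corollary follows. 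The paper's formula is therefore implied by what this route produces, but it is not literally what the route produces, and the paper shows no computation.

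Two further points complete the argument:
\begin{enumerate}
\item The lemma's proof needs the a priori exponent $\alpha$ to be below $\frac{\epsilon'}{2+\epsilon'}$. This holds for $\alpha$ close to $\epsilon_0$ whenever $\epsilon_0<\epsilon_1$. Otherwise the conclusion is already contained in the hypothesis.
\item Your uniformity concern is justified. The constant in \eqref{eq::holder_ude} depends on the $\cont^{1,\alpha}$ seminorm of $\partial E$, not only on $n,s,\Lambda,\gamma$. It must be made uniform along the sequences in Lemma~\ref{lemma::partial_harnack} and Proposition~\ref{prop::improv_flat_rescaled}, which \eqref{eq::flat_nu0} provides; the paper passes over this.
\end{enumerate}
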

		\begin{rem}
			We emphasize that Lemma~\ref{lemma::holder_ude} allows a better choice of $\epsilon$ in~\eqref{eq::epsilon_choice}, which in turn improves~\eqref{cor::EL2} in Theorem~\ref{th::EL}. Recall that $\epsilon_0:=\frac{\epsilon}{2+\epsilon}$, where $\epsilon$ is precisely given by~\eqref{eq::epsilon_choice}. Then, we obtain $\epsilon_0<\epsilon_1$.
		\end{rem}
		
		\begin{proof}[Proof of Lemma~\ref{lemma::holder_ude}]
			Without loss of generality,we can suppose that $E$ coincides with the subgraph
			\begin{equation*}
				\{(x',x_n)\in\R^n \mbox{ s.t. }x_n<|x'|^{1+\alpha}\}
			\end{equation*}
			around the origin. Then, we estimate the volume of the set $A_d$ defined as in~\eqref{eq::def_bubble} in the worst case scenario (which is precisely when the concavity of $\partial E$ is facing upwards).
			\begin{figure}[h!]
				\centering
				\includegraphics[width=0.4\linewidth]{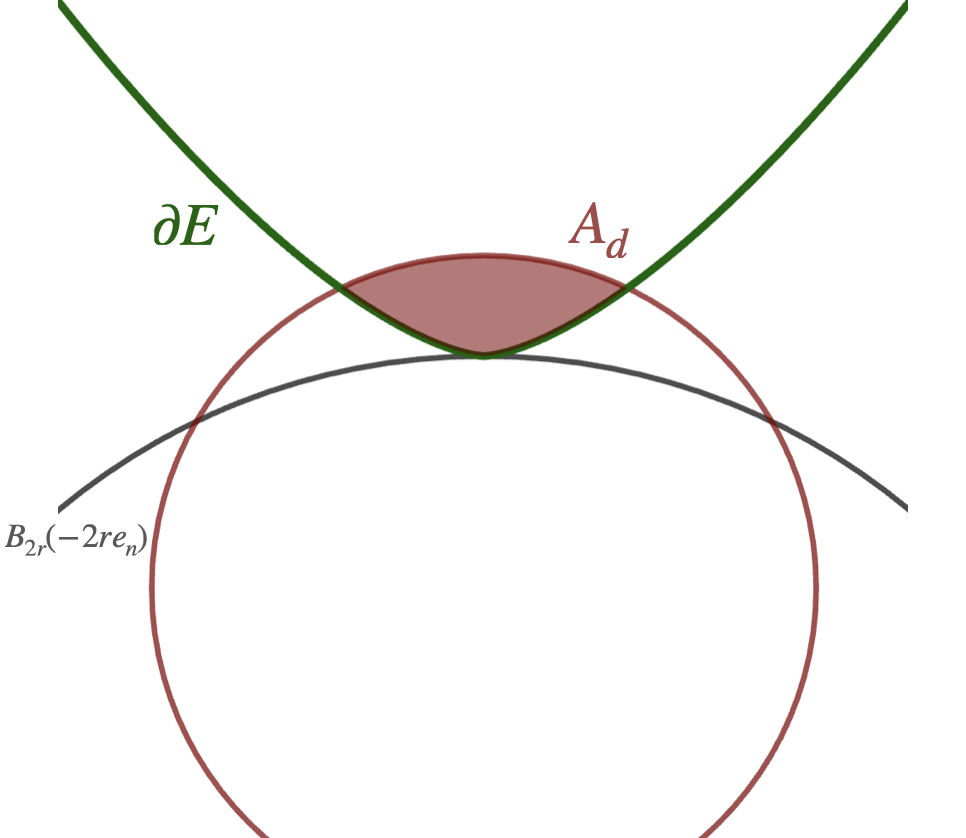}
				\caption{The set $A_d$ when the $E=\{x_n<|x'|^{1+\alpha}\}$.}
			\end{figure}
			
			To find the intersection between $\partial E$ and $\partial A_d$, we solve the equation
			\begin{equation} \label{eq::intersection}
				d-\frac{-|x'|^2}{2r} = |x'|^{1+\alpha}.
			\end{equation}
			
			Let us assume that a solution of~\eqref{eq::intersection} has the form $|x'|=r^\beta$, with $\beta\geq0$. Then, recalling that $d=r^{1+2\epsilon}$, we find that $\beta$ satisfies 
			\begin{equation} \label{eq::intersection2}
				r^{2\beta}+r^{2\beta(1+\alpha)} = 2r^{2+2\epsilon}.
			\end{equation}
			
			Since $r<1$ is small, comparing the terms on both sides of~\eqref{eq::intersection2}, we deduce that the dominant term on the right hand side must be of order $2+2\epsilon$.
			Let us suppose that $r^{2\beta}\geq r^{1+\beta(1+\alpha)}$, and hence $\beta=1+\epsilon$. Recalling that $\alpha<\frac{\epsilon}{2+\epsilon}$, we infer that
			\begin{equation*}
				1+\beta(1+\alpha) = 2+\epsilon+\alpha+\alpha\epsilon < 2+2\epsilon,
			\end{equation*}
			against $r^{2\beta}\geq r^{1+\beta(1+\alpha)}$. It thereby follows that $r^{2\beta}\leq r^{1+\beta(1+\alpha)}$ and
			\begin{equation*}
				\beta = \frac{1+2\epsilon}{1+\alpha}.
			\end{equation*}
			
			Therefore, we conclude that
			\begin{equation*}
				\begin{split}
					&|A_d| \geq \int_{B_{r^\beta}^{n-1}} \left(d-\frac{-|x'|^2}{2r} - |x'|^{1+\alpha}\right) dx' \\
					&\qquad\qquad= \omega_n\left(dr^{\beta} -\frac{1}{2(n-1)}r^{(n+1)\beta-1}-\frac{1}{n+\alpha}r^{(n+\alpha)\beta}\right) \\
					&\qquad\qquad \geq \frac{n+\alpha-2}{2(n+\alpha)} r^{(1+2\epsilon)\frac{n+\alpha}{1+\alpha}}=: c_0 r^{n(1+2\epsilon)-\frac{\alpha}{1+\alpha}(1+\epsilon)(n-1)}. 
				\end{split}
			\end{equation*}
			as desired.
		\end{proof}
		
		\begin{proof}[Proof of Corollary~\ref{cor::optimal_reg}]
			Thanks to Theorem~\ref{th::holder_reg_almost_min}, we know that $E$ is $\cont^{1,\alpha}$-regular for every $\alpha\leq \epsilon_0$. From here, by virtue of Lemma~\ref{lemma::holder_ude} and Corollary~\ref{cor::better_reg}, we improve the regularity of $E$ up to 
			\begin{equation*}
				\epsilon_1 = \frac{\gamma+\epsilon_0(n+\gamma-1)}{2n+1+s-\gamma +\epsilon_0(n+s-\gamma)}.
			\end{equation*}
			
			Iterating this procedure, we improve the regularity for $E$, and we find a sequence $\{\epsilon_j\}_j$ such that
			\begin{equation*}
				\epsilon_{j+1} =  \frac{\gamma+\epsilon_j(n+\gamma-1)}{2n+1+s-\gamma +\epsilon_j(n+s-\gamma)}.
			\end{equation*}
			
			Since the sequence $\{\epsilon_j\}_j$ s increasing with respect ot the parameter $j$, there exists 
			\begin{equation*}
				\overline{\epsilon} :=\lim_{j\to+\infty} \epsilon_j,
			\end{equation*}
			and it satisfies
			\begin{equation*}
				\overline{\epsilon} = \frac{\gamma+\overline{\epsilon}(n+\gamma-1)}{2n+1+s-\gamma +\overline{\epsilon}(n+s-\gamma)},
			\end{equation*}
			
			In particular, we have
			\begin{equation*} \label{eq::optimal_exp}
				\overline{\epsilon} = \frac{-(n+s+2-2\gamma)+\left[ (n+s+2-2\gamma)^2+4\gamma(n+s-\gamma)\right]^{1/2}}{2(n+s-\gamma)}.\qedhere
			\end{equation*}
			
		\end{proof}
		
		
		\section{Hausdorff dimension of the singular set (Theorem~\ref{th::haus_dim_singular_almost})} \label{sec::proof_haus_dim_sing}
		
		By mean of to the monotonicity formula from Proposition~\ref{prop::Phi_monotone}, we are able to study regular and singular points(in the sense of Definition~\ref{def::sing_reg_points}) of~$E$ through a blow-up analysis. In particular, if~$E$ is an almost minimal set in~$B_1$ such that~$0\in\partial E$, we consider the dilation~$\lambda E$ and study the limit set when~$\lambda\to+\infty$. We will show that such a limit exists and is a $s$-minimal cone. in the following sense:
		\begin{definition}
			We say that a set~$C$ is a $s$-minimal cone if it is a~$0$-homogeneous local minimizer for the~$s$-perimeter, i.e.~$C$ is a minimizer for~$\Per_s(\cdot,B_1)$ such that~$\lambda C=C$, for every~$\lambda>0$.
		\end{definition}
		
		The main tool in our proof is the following:
		
		\begin{lemma}[{\cite[Proposition~9.1]{MR2675483}}] \label{lemma::phi_continuous_in_E}
			Let~$\{\lambda_k\}_k$ and $\{\Lambda_k\}_k$ be sequences of positive real numbers such that $\lambda_k\to+\infty$ and $\Lambda_k\to\Lambda\in[0,+\infty)$, as~$k\to+\infty$, and let $\gamma\in(0,s]$. 
			
			Let~$\{E_k\}_k$ be a sequence of~$(\Lambda_k,\gamma)$-minimal sets in~$B_{\lambda_k}$. Moreover, assume that~$E_k\to E$ in~$L^1_{\loc}$, for some set~$E$
			of finite fractional perimeter, as~$k\to+\infty$.
			
			Let also~$u_k:=\chi_{E_k}-\chi_{E_k^c}$ and~$u:=\chi_E-\chi_{E^c}$, with extension~$\widetilde{u}_k$ and~$\widetilde{u}$, respectively.
			
			Then,
			\renewcommand{\theenumi}{\roman{enumi}}
			\begin{enumerate}
				\item\label{item::phi_continuous_in_E_1} $\widetilde{u}_k\to\widetilde{u}$ uniformly on every compact set~$K\Subset\R^{n+1}_+$;
				\item\label{item::phi_continuous_in_E_2} $\nabla\widetilde{u}_k\to\nabla\widetilde{u}$ in~$L^2_{\loc}(\R^{n+1}_+,z^{1-s}\,dx\,dz)$.
			\end{enumerate}
			In particular, $\Phi_{E_k}\to\Phi_E$.
		\end{lemma}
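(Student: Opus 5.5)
The plan follows the argument of \cite[Proposition~9.1]{MR2675483}. The new ingredient is to check that the almost minimality error vanishes in the limit. By Corollary~\ref{cor::boundary_conv}, applied on every ball $B_R$ (which is allowed since $\lambda_k\to+\infty$), the limit set $E$ is $(\Lambda,\gamma)$-almost minimal in every $B_R$, and $\partial E_k\to\partial E$ locally in Hausdorff distance. Part~\eqref{item::phi_continuous_in_E_1} then comes from the Poisson kernel representation of the extension,
\[
\widetilde{u}_k(x,z)=\int_{\R^n}P(x-y,z)\,u_k(y)\,dy,
\]
with $P(\cdot,z)$ integrable, uniformly for $z$ in compact subsets of $(0,+\infty)$. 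Since $|u_k|\leq1$ and $u_k\to u$ in $L^1_{\loc}$, dominated convergence (splitting into a large ball and a tail controlled by the decay of $P$) gives $\widetilde u_k\to\widetilde u$ uniformly on compact sets $K\Subset\R^{n+1}_+$. Derivatives of $P$ enjoy the same bounds, so we also get $\nabla\widetilde u_k\to\nabla\widetilde u$ locally uniformly in the open half-space.

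For~\eqref{item::phi_continuous_in_E_2}, it then suffices to rule out concentration of $z^{1-s}|\nabla\widetilde u_k|^2$ near $\{z=0\}$. Precisely, we aim for
\[
\int_{B_R\times(0,\delta)} z^{1-s}|\nabla\widetilde u_k|^2\,dx\,dz\leq \omega(\delta)
\]
uniformly in $k$, with $\omega(\delta)\to0$ as $\delta\to0$. The argument in \cite{MR2675483} uses two facts. First, uniform energy bounds $\int_{B_R^+}z^{1-s}|\nabla\widetilde u_k|^2\leq CR^{n-s}$, which follow from comparing $\Per_s(E_k,B_R)$ with $\Per_s(E_k\cup B_R,B_R)$ and adding the error $\Lambda_kR^{n-s+\gamma}$, bounded since $\Lambda_k\to\Lambda$. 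Second, the uniform density estimates of Proposition~\ref{prop::UDE} (with constants independent of $k$) combined with the local Hausdorff convergence of boundaries, which show that $\partial E_k$ is contained in thin neighbourhoods of $\partial E$. Covering the $\delta$-neighbourhood of $\partial E\cap B_R$ by balls of radius $\delta$ and using the energy bound on each, the count coming from density estimates, yields the estimate; away from the boundary, $u_k$ is locally constant and the energy in a thin layer is small by direct computation.

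The main obstacle is the energy comparison for the minimizing sequence itself. In \cite{MR2675483} the extension energy is linked to $\Per_s$ through minimality of the extension among competitors with the same trace. The almost minimality error enters additively as $\Lambda_k r^{n-s+\gamma}$, which is harmless on fixed scales. The first attempt will be to redo the cut-off interpolation of \cite{MR2675483}: glue $\widetilde u_k$ and $\widetilde u$ in a shell and compare energies, so that $\limsup_k\int z^{1-s}|\nabla\widetilde u_k|^2\leq\int z^{1-s}|\nabla\widetilde u|^2$ on balls up to an error controlled by the gluing term plus $\Lambda_k R^{n-s+\gamma}$. Together with lower semicontinuity from weak convergence, this gives norm convergence, hence strong $L^2_{\loc}$ convergence. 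The gluing error vanishes by the local uniform convergence of part~\eqref{item::phi_continuous_in_E_1} and the $L^1_{\loc}$ convergence of traces.

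Finally, $\Phi_{E_k}(r)\to\Phi_E(r)$ for each fixed $r$. The energy term converges by~\eqref{item::phi_continuous_in_E_2}, and the correction term $\frac{n-s}{\gamma}\hat\Lambda_k r^\gamma$ converges because $\Lambda_k\to\Lambda$.
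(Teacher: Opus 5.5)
The paper gives no details here beyond deferring to \cite[Proposition~9.1]{MR2675483} and \cite[Appendix~H]{MR5015001}. Your part~(i), and your general scheme for~(ii) (energy bounds, weak compactness, an upper bound on the limit energy via a competitor), are in that spirit.

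\textbf{First gap: strong convergence when $\Lambda>0$.} The statement explicitly allows $\Lambda>0$, and there the passage to strong convergence fails. A comparison on a fixed ball $B_R$ gives at best $\limsup_k\int_{B_R^+}z^{1-s}|\nabla\widetilde{u}_k|^2\leq\int_{B_R^+}z^{1-s}|\nabla\widetilde{u}|^2+\hat{\Lambda}R^{n-s+\gamma}$. Since $\Lambda_k\to\Lambda\neq0$, this error does not go away, so lower semicontinuity cannot upgrade weak to strong convergence. Your claim that the error ``vanishes in the limit'' holds only for $\Lambda=0$. To be fair, that is the only case used in Theorem~\ref{th::blowup_cone}, where $\Lambda_k=\lambda_k^{-\gamma}\Lambda$.

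Your covering paragraph does not fix this. Density estimates only let one cover $\partial E\cap B_R$ by $C(R/\delta)^{n-s}$ balls of radius $\delta$: disjoint ones each carry $\gtrsim\delta^{n-s}$ of $\Per_s(E,B_{2R})\lesssim R^{n-s}$. Multiplying by the per-ball energy bound $C\delta^{n-s}$ gives $O(R^{n-s})$, not an $\omega(\delta)\to0$.

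The missing idea is to apply the count to the \emph{excess} at small scales, not to the energy. Let $\mu$ be the weak-$*$ limit of $z^{1-s}|\nabla\widetilde{u}_k|^2\,dx\,dz$ minus $z^{1-s}|\nabla\widetilde{u}|^2\,dx\,dz$. Then:
\begin{itemize}
\item $\mu$ is carried by $(\partial E)\times\{0\}$, by part~(i) together with $L^1_{\loc}$ and Hausdorff convergence;
\item the competitor argument in $B_r(x_i)$ with $x_i\in\partial E$ gives $\mu(B_r(x_i)\times\{0\})\leq C\hat{\Lambda}r^{n-s+\gamma}$;
\item summing over $C(R/r)^{n-s}$ such balls gives $\mu(B_R\times\{0\})\leq C\hat{\Lambda}R^{n-s}r^{\gamma}\to0$ as $r\to0$.
\end{itemize}
This is exactly where $\gamma>0$ is needed.

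\textbf{Second gap: the gluing.} Admissible competitors in Proposition~\ref{prop::char_almost_min_ext} must satisfy $|\overline{v}|=1$ on $\Omega_0$, and every shell around $\partial\Omega$ reaches $\{z=0\}$. So a cut-off interpolation $\varphi\widetilde{u}+(1-\varphi)\widetilde{u}_k$ has trace $\varphi u+(1-\varphi)u_k\notin\{-1,1\}$ on $E\Delta E_k$, and is not allowed. The trace must instead switch sharply from $u$ to $u_k$ across a sphere, i.e.\ one compares with $(E\cap B_\rho)\cup(E_k\setminus B_\rho)$. The price is an interaction across $\partial B_\rho$ of the type $\mathcal{L}_s\big(B_\rho,(E\Delta E_k)\setminus B_\rho\big)$.

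Part~(i) gives no control on this term, since it says nothing near $\{z=0\}$. The term is made small by choosing $\rho$ through an average over an interval of radii, using that the kernel singularity $(|x|-\rho)^{-s}$ is integrable in $\rho$, together with $E_k\to E$ in $L^1_{\loc}$. This is the argument of the compactness theorem in \cite{MR2675483}.

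With these two additions your outline becomes a proof of the lemma as stated.
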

		
		The proof of Lemma~\ref{lemma::phi_continuous_in_E} is a variation of the proof of~\cite[Proposition~9.1]{MR2675483}. See also \cite[Appendix H]{MR5015001} for a proof in the setting of $(\Lambda,0)$-minimal sets.
		
		We point out that~$E$ is a minimal cone with vertex at the origin, that is the~$(n+1)$-dimensional extension of~$\chi_E-\chi_{E^c}$ is a~$0$-homogeneous function, if and only if the function~$\Xi_E$, as defined in~\eqref{eq::def_Xi}, is constant (see~\cite[Corollary~8.2]{MR2675483}).
		
		By Lemma~\ref{lemma::phi_continuous_in_E}, we infer that the limit set of a convergent blow-up sequence~$\{E_k:=\lambda_k E\}_k$ is a minimal cone, according to the following:
		
		\begin{theorem}[Blow up limit] \label{th::blowup_cone}
			Let~$E$ be a~$(\Lambda,\gamma)$-almost minimal set in~$B_1$ such that~$0\in\partial E$, and let~$\{\lambda_k\}_k$ be a sequence of positive real numbers
			such that~$\lambda_k\to+\infty$ as~$k\to+\infty$. 
			
			Then, there exists a set~$C$ such that~$\lambda_k E\to C$ in~$L^1_{\loc}(\R^n)$ (up to a subsequence). Moreover,~$C$ is a minimal cone tangent to~$E$ at the origin.
		\end{theorem}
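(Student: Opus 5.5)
The proof follows the classical scheme of \cite[Section~9]{MR2675483}, with the monotonicity formula of Proposition~\ref{prop::Phi_monotone} and the compactness statements of Section~\ref{sec::UDE} replacing their minimal-set counterparts.

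\emph{Compactness.} Set $E_k:=\lambda_k E$. Since $E$ is $(\Lambda,\gamma)$-almost minimal in $B_1$, rescaling as in \eqref{eq::still} shows that $E_k$ is $(\Lambda\lambda_k^{-\gamma},\gamma)$-almost minimal in $B_{\lambda_k}$, and $\Lambda_k:=\Lambda\lambda_k^{-\gamma}\to0$. Comparing $E_k$ with $E_k\cup B_R$ (or $E_k\setminus B_R$) in the almost-minimality inequality gives the uniform bound
\[
\Per_s(E_k,B_R)\leq CR^{n-s}+\Lambda_kR^{n-s+\gamma}\leq C_R
\]
for every fixed $R$ and $k$ large. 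The compact embedding of $H^{s/2}_{\loc}$ into $L^1_{\loc}$, combined with a diagonal argument in $R$, then yields a subsequence with $E_k\to C$ in $L^1_{\loc}(\R^n)$. By Corollary~\ref{cor::boundary_conv}, applied on each $B_R$ after rescaling (the constants stay uniform because $\Lambda_k\to0$), the limit $C$ is $(0,\gamma)$-almost minimal, that is, a genuine local minimizer of $\Per_s$. Moreover $\partial E_k\to\partial C$ in the Hausdorff sense locally, so $0\in\partial C$.

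\emph{Homogeneity.} Proposition~\ref{prop::Phi_monotone} shows that $\Phi_E$ is monotone for small $r$. Since it is also bounded (the energy term is controlled by the perimeter bound above), the limit $\Phi_E(0^+):=\lim_{r\to0}\Phi_E(r)$ exists. By the scaling of the extension,
\[
\Phi_{E_k}(r)=r^{s-n}\int_{B_r^+}z^{1-s}|\nabla\widetilde u_k|^2+\tfrac{n-s}{\gamma}\hat\Lambda_k r^\gamma ,
\]
whose energy part equals the energy part of $\Phi_E(r/\lambda_k)$, while the remainder term tends to $0$. Lemma~\ref{lemma::phi_continuous_in_E}, applied with $\Lambda_k\to0$, gives convergence of the energy part to that of $\Phi_C(r)$. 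Hence $\Phi_C(r)=\Phi_E(0^+)$ is constant in $r$, with $\Phi_C$ now having no remainder term.

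By the monotonicity identity for minimizers (\cite[Theorem~8.1 and Corollary~8.2]{MR2675483}), the derivative of $\Phi_C$ controls
\[
\int_{\partial B_r^+}z^{1-s}\,|(X,z)\cdot\nabla\widetilde u_C|^2 .
\]
Constancy of $\Phi_C$ therefore forces $\widetilde u_C$ to be $0$-homogeneous, so $C$ is a cone. Being a minimizer, $C$ is an $s$-minimal cone, and by Definition~\ref{def::sing_reg_points} it is tangent to $E$ at the origin.

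\emph{Main obstacle.} The delicate step is justifying that the remainder term in the monotonicity formula vanishes under blow-up, together with the strong $L^2$ convergence of the gradients in Lemma~\ref{lemma::phi_continuous_in_E}. That convergence requires a uniform energy bound and a local almost-minimality of the extensions, in which the error $\Lambda_k R^{n-s+\gamma}$ must tend to $0$ for each fixed $R$. I would verify this by adapting the cut-off and competitor construction of \cite[Proposition~9.1]{MR2675483}, using only that the error tends to zero.
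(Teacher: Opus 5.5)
Your proposal is correct and follows essentially the same route as the paper: a uniform bound on $\Per_s(\lambda_k E,B_R)$ from comparison with a ball, compactness plus a diagonal extraction, convergence $\Phi_{\lambda_k E}(r)\to\Phi_C(r)$ via Lemma~\ref{lemma::phi_continuous_in_E} combined with the scaling identity $\Xi_{\lambda_k E}(r)=\Xi_E(r/\lambda_k)$, and finally constancy of $\Xi_C$ giving $0$-homogeneity through \cite[Corollary~8.2]{MR2675483}. The differences are only in bookkeeping: you give the rescaled sets the vanishing remainder constant $\hat\Lambda\lambda_k^{-\gamma}$, whereas the paper carries the fixed term $\frac{n-s}{\gamma}\hat\Lambda r^\gamma$ and subtracts it; and you explicitly justify via Corollary~\ref{cor::boundary_conv} that $C$ is a genuine minimizer, which the paper leaves implicit but which is needed to apply \cite[Corollary~8.2]{MR2675483}.
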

		
		\begin{proof}
			Notice that if~$E$ is a~$(\Lambda,\gamma)$-almost minimal set in~$B_1$, then~$\lambda_k E$ is a~$(\lambda_k^{-\gamma}\Lambda,\gamma)$-almost minimal set in~$B_{\lambda_k}$. Therefore, for~$k$ sufficiently large, we have that~$\lambda_k E$ is almost minimal also in~$B_1$.
			
			Let us consider the set~$A_{k}^1:=B_1\cup(\lambda_k E\cap B_1^c)$. In this setting, we have that~$A_{k}^1\Delta\lambda_k E\subseteq B_1$, and, by the almost minimality of~$\lambda_k E$,
			\begin{align*}
				&\norm{\chi_{\lambda_k E}}_{W^{1,s}(B_1)} = \norm{\chi_{\lambda_k E}}_{L^1(B_1)}+[\chi_{\lambda_k E}]_{W^{1,s}(B_1)}\\
				&\qquad\leq |B_1|+2\Per_s(\lambda_k E,B_1)\leq |B_1|+2\Per_s(A_{k}^1,B_1) + 2\lambda_k^{-\gamma}\Lambda\\
				&\qquad\leq |B_1|+2\Per_s(B_1,\R^n)  +2\Lambda,
			\end{align*} 
			provided that~$k$ is sufficiently large.
			
			Using the Sobolev embedding~$W^{1,s}(B_1)\hookrightarrow\hookrightarrow L^1(B_1)$, we extract a subsequence~$\lambda_k^1$ such that
			$$ (\lambda_k^1 E)\cap B_1 \to E_1 \text{ in } L^1(B_1)$$
			for some set~$E_1\subseteq B_1$. Repeating the argument with~$\{\lambda_k^1 E\}_k$ in~$B_2$, we find a subsequence~$\{\lambda_k^2\}_k\subseteq\{\lambda_k^1\}_k$ and a set~$E_2\subseteq B_2$ such  that 
			$$ (\lambda_k^2 E)\cap B_2 \to E_2 \text{ in } L^1(B_2).$$
			Moreover, by the uniqueness of the limit, we have that~$E_2\cap B_1=E_1$.
			
			Iterating this procedure~$m$ times, we obtain a subsequence~$\{\lambda_k^m\}_k$ such that~$ (\lambda_k^m E)\cap B_m \to E_m$ in~$L^1(B_m)$ as~$k\to+\infty$, and~$E_m\cap B_j=E_j$, for all~$j=1,\dots,m-1$.
			
			Using a diagonal argument, we extract a subsequence~$\{\lambda_k^k\}_k$ such that
			\begin{equation} \label{eq::conv_to_cone}
				\lambda_k^k E \to \bigcup_{m=1}^{+\infty} E_m=:C \text{ in }L^1_{\loc}(\R^n).
			\end{equation}
			
			Furthermore, recall that~$\lambda_k E$ is a~$(\lambda_k^{-\gamma}\Lambda,\gamma)$-almost minimal set
			in~$B_{\lambda_k}$. Thus, by~\eqref{eq::conv_to_cone} and Lemma~\ref{lemma::phi_continuous_in_E}, we infer that
			$$\lim_{k\to+\infty} \Phi_{\lambda_k E}(r)=\Phi_C(r).$$
			
			As a consequence, recalling the notation in~\eqref{eq::def_Xi}, we obtain that
			\begin{equation*}
				\begin{split}
					&\Phi_C(r) =\lim_{k\to+\infty}\Phi_{\lambda_k E}(r) 
					= \lim_{k\to+\infty}\Xi_{\lambda_k E}(r) + \frac{n-s}{\gamma}\hat{\Lambda} r^{\gamma}\\
					&\qquad\qquad=\lim_{k\to+\infty} \Xi_E\left(\frac{r}{\lambda_k }\right) +\frac{n-s}{\gamma}\hat{\Lambda} r^{\gamma}.
				\end{split}
			\end{equation*}
			
			Now, since~$\Phi_E$ is positive and non-decreasing, we have that~$\Phi_E(r/\lambda_k)$ admits limit as~$k\to+\infty$ Therefore, also recalling that $\gamma<s$, we infer that
			$$\lim_{k\to+\infty} \Xi_E\left(\frac{r}{\lambda_k}\right) =\lim_{k\to+\infty} \Phi_E\left(\frac{r}{\lambda_k}\right) - \frac{n-s}{\gamma}\hat{\Lambda} \left(\frac{r}{\lambda_k}\right)^{\gamma} = \Phi_E(0) = \Xi_E(0) .$$
			As a result, we find that
			$$	\Phi_C(r)=	\Xi_E(0)+\frac{n-s}{\gamma}\hat{\Lambda} r^{\gamma},$$
			which reads
			$$\Xi_C(r)	= \Phi_C(r) - \frac{n-s}{\gamma}\hat{\Lambda} r^{\gamma} =\Xi_E(0).$$
			This shows that~$\Xi_C$ is constant, hence~$C$ is a minimal cone with vertex at~$0$.
			
		\end{proof}
		
		\subsection{Classification of tangent cones}
		Now, we show how the classification of the tangent cones to an almost minimal set allows to deduce regularity.
		
		\begin{theorem}\label{th::regular_tangent_cone}
			Let~$E$ be a~$(\Lambda,\gamma)$-almost minimal set in~$B_1$ such that~$0\in\partial E$. Suppose that the tangent cone to~$E$ at~$0$ is a half-space. 
			
			Then, there exists~$\delta>0$ such that~$\partial E\cap B_\delta$ is a~$\cont^{1,\alpha}$-regular surface.
		\end{theorem}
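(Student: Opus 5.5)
The plan is to reduce Theorem~\ref{th::regular_tangent_cone} to the flatness-based regularity result, Theorem~\ref{th::holder_reg_almost_min}, applied to a suitable rescaling of $E$. The tangent cone is a half-space, say $\{x_n<0\}$ after a rotation. So there is a sequence $\lambda_k\to+\infty$ with $\lambda_k E\to\{x_n<0\}$ in $L^1_{\loc}(\R^n)$.

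As observed in the proof of Theorem~\ref{th::blowup_cone}, each $\lambda_k E$ is $(\lambda_k^{-\gamma}\Lambda,\gamma)$-almost minimal in $B_{\lambda_k}$, hence in $B_2$ for $k$ large. Since $\lambda_k^{-\gamma}\Lambda\leq\Lambda$, it is in particular $(\Lambda,\gamma)$-almost minimal there. I then apply Corollary~\ref{cor::boundary_conv} in $B_2$, with $\Lambda_k:=\lambda_k^{-\gamma}\Lambda\to0$ and $\gamma_k=\gamma$. The limit half-space has finite $s$-perimeter in $B_2$, so the corollary gives
$$d_{\mathcal H}\big(\partial(\lambda_kE)\cap \overline B_{1},\ \{x_n=0\}\cap\overline B_1\big)\to0 .$$
More precisely, the uniform density estimates of Proposition~\ref{prop::UDE}, with constants uniform in $k$, exclude boundary points of $\lambda_kE$ far from $\{x_n=0\}$ inside $B_1$. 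This is the step needing the most care: one must check that uniform control holds on the whole of $B_1$, not just on compact subsets of $B_2$ slightly smaller than $B_1$. To handle this, I would work in $B_4$ and apply Corollary~\ref{cor::boundary_conv} there, so that $B_1$ is well inside the domain.

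Fix $\eta_0$ as in Theorem~\ref{th::holder_reg_almost_min}, chosen for the parameters $(n,s,\Lambda,\gamma)$; it works for every $\Lambda_k\leq\Lambda$, since smaller $\Lambda$ only strengthens almost minimality. Choose $k$ so large that $\partial(\lambda_kE)\cap B_1\subseteq\{|x_n|\leq\eta_0\}$. Theorem~\ref{th::holder_reg_almost_min} then gives that $\partial(\lambda_kE)\cap B_{1/2}$ is a $\cont^{1,\alpha}$-surface. Scaling back, $\partial E\cap B_{\delta}$ is $\cont^{1,\alpha}$ with $\delta:=1/(2\lambda_k)$, because dilations preserve $\cont^{1,\alpha}$-regularity and only change the constants.

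The main obstacle is making sure the hypothesis of Theorem~\ref{th::holder_reg_almost_min} is met uniformly. Two points need checking: that $\eta_0$ does not deteriorate along the sequence, and that Hausdorff convergence of boundaries really follows from $L^1_{\loc}$ convergence for the blow-ups. Both rely on the constants in Proposition~\ref{prop::UDE} and Corollary~\ref{cor::boundary_conv} depending only on upper bounds for $\Lambda_k$. If monotonicity of the constants in $\Lambda$ were an issue, I would instead apply the theorem with $\Lambda$ replaced by $\sup_k\Lambda_k=\Lambda$ directly.
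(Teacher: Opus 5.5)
Your proposal is correct and follows essentially the same route as the paper: blow up along a sequence $\lambda_k\to+\infty$ with $\lambda_kE$ converging to the half-space, upgrade $L^1_{\loc}$ convergence to uniform convergence of boundaries via Corollary~\ref{cor::boundary_conv}, choose $k$ with $\partial(\lambda_kE)\cap B_1\subseteq\{|x_n|<\eta_0\}$, apply Theorem~\ref{th::holder_reg_almost_min}, and rescale to get $\delta=1/(2\lambda_k)$. Your extra remarks make explicit points the paper's short proof leaves implicit: that $\lambda_kE$ is $(\lambda_k^{-\gamma}\Lambda,\gamma)$-almost minimal, hence $(\Lambda,\gamma)$-almost minimal in $B_1$, so $\eta_0$ is uniform in $k$, and that one should work in a larger ball so $B_1$ sits well inside the domain.
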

		
		The proof of Theorem~\ref{th::regular_tangent_cone} relies on following auxiliary result and will be presented here below. 
		
		\begin{theorem}[Energy gap]\label{th::energy_gap}
			Let~$\Pi:=\{x_1>0\}$. 
			Then, for every tangent cone~$C$, 
			\begin{equation}\label{eq::PileqC}
				\Xi_\Pi\leq\Xi_C,
			\end{equation}
			and equality holds if and only if~$C$ is a half-space. 
			
			In particular, if~$C$ is not a half-space, then
			there exists~$\delta_0>0$, depending only on~$n$ and~$s$, such that
			\begin{equation}\label{eq::gap}
				\Xi_\Pi\leq\Xi_C-\delta_0 .
			\end{equation}
		\end{theorem}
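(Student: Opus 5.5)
The plan is to reproduce the argument of \cite[Theorem~9.6 and Corollary~9.7]{MR2675483}, in the form adapted in \cite{MR5015001}. The tangent cones appearing here are, by Theorem~\ref{th::blowup_cone}, genuine $s$-minimal cones. The first observation is that $\Xi_C$ is constant in $r$ for every such cone. Indeed, $0$-homogeneity of the extension $\widetilde{u}$ gives $\Xi_C(r)=\Xi_C(1)$, so \eqref{eq::PileqC} is a statement about numbers. In particular, the almost-minimality remainder plays no role in this statement: only minimal cones are involved.

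For \eqref{eq::PileqC} and the equality case I would argue as follows.
\begin{enumerate}
\item Pick a point $x_0\in\partial C\setminus\{0\}$ with $|x_0|=1$ at which $C$ has a tangent half-space. Such points exist because $\partial C$ has a regular point, by the density estimates (Proposition~\ref{prop::UDE} with $\Lambda=0$) together with the $\haus{n-1}$-a.e.\ regularity coming from the improvement of flatness in Section~\ref{sec::improvement_of_flatness}.
\item Blow up $C$ at $x_0$. The blow-up is a half-space $\Pi'$, and by rotation invariance $\Xi_{\Pi'}=\Xi_\Pi$.
\item Apply the monotonicity formula (Proposition~\ref{prop::Phi_monotone} with $\Lambda=0$) centred at $x_0$, together with the continuity of $\Phi$ under $L^1_{\loc}$ convergence (Lemma~\ref{lemma::phi_continuous_in_E}), to get
\[
\Xi_\Pi=\Xi_{C-x_0}(0^+)\leq \Xi_{C-x_0}(R)\quad\text{for all } R.
\]
\item Let $R\to+\infty$. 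The rescalings $R^{-1}(C-x_0)$ converge to $C$ in $L^1_{\loc}$, so Lemma~\ref{lemma::phi_continuous_in_E} gives $\Xi_{C-x_0}(R)\to\Xi_C$. This proves $\Xi_\Pi\leq\Xi_C$.
\end{enumerate}
For the equality case: if $\Xi_\Pi=\Xi_C$, then $\Xi_{C-x_0}$ is constant in $R$. By the rigidity in \cite[Corollary~8.2]{MR2675483}, $C$ is then a cone with respect to $x_0$ as well as $0$. Hence $C$ is invariant along the direction $x_0$, and repeating this at a dense set of regular points forces $\partial C$ to be a hyperplane. A slicker route is to reduce dimension: $C=C'\times\R$ with $C'$ a minimal cone in $\R^{n-1}$, and one concludes by induction on dimension, since in dimension one the only minimal cones are half-lines.

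For the gap \eqref{eq::gap} I would argue by contradiction and compactness. Suppose there are minimal cones $C_k$, none a half-space, with $\Xi_{C_k}\to\Xi_\Pi$. The uniform perimeter bound in $B_1$ (compare with $B_1\cup C_k$, as in the proof of Theorem~\ref{th::blowup_cone}) gives $C_k\to C_\infty$ in $L^1_{\loc}$ along a subsequence. By Corollary~\ref{cor::boundary_conv}, $C_\infty$ is minimal, and it is still $0$-homogeneous. Lemma~\ref{lemma::phi_continuous_in_E} gives $\Xi_{C_\infty}=\Xi_\Pi$, so $C_\infty$ is a half-space by the first part.

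Uniform convergence of boundaries then says that $\partial C_k\cap B_1$ lies in an arbitrarily thin slab for large $k$. This is where $\eta_0$-flatness enters: Theorem~\ref{th::holder_reg_almost_min}, with $\Lambda=0$, and its corollary via improvement of flatness, shows that $\partial C_k\cap B_{1/2}$ is a $\cont^{1,\alpha}$ graph. For a cone this means the vertex $0$ is a regular point, so $C_k$ is a half-space, which is a contradiction. The constant $\delta_0$ then depends only on $n$ and $s$, since the flatness threshold $\eta_0$ does.

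I expect the rigidity in the equality case to be the main obstacle: the passage from constancy of $\Xi_{C-x_0}$ to translation invariance of $C$, and the dimension-reduction induction. For this I would directly quote the homogeneity characterization \cite[Corollary~8.2]{MR2675483} and the splitting lemma from \cite[Section~10]{MR2675483}.
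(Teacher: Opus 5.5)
Your architecture is exactly the argument of \cite[Theorem~9.6]{MR2675483} that the paper invokes: a regular point $x_0\in\partial C\setminus\{0\}$, monotonicity of $\Xi_{C-x_0}$ squeezed between the blow-up at $x_0$ and the blow-down to $C$, rigidity via \cite[Corollary~8.2]{MR2675483}, and the gap by compactness plus $\eta_0$-flatness. Steps~2--4 and the compactness part are fine.

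\textbf{The gap is Step~1, on which the whole inequality rests.} Improvement of flatness is an $\epsilon$-regularity statement: it upgrades flatness at some scale to $\cont^{1,\alpha}$ regularity. It cannot, by itself, produce even one point of $\partial C$ at which $C$ is flat. The density estimates only make $\partial C$ large, not flat anywhere. Nothing available at this stage gives $\haus{n-1}$-a.e.\ regularity:
\begin{itemize}
\item De Giorgi's structure theory would require $C$ to have locally finite classical perimeter. That is a deep BV estimate for $s$-minimal sets, neither proved nor cited here.
\item A bound on the dimension of $\Sigma_C$ is precisely what this section is building towards, so using it would be circular.
\end{itemize}

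\textbf{The missing ingredient is the one the paper names: the clean ball condition, used to manufacture an interior tangent ball.}
\begin{itemize}
\item By Corollary~\ref{cor::clean_ball} (with $\Lambda=0$) there is $B_c(y)\subseteq C\cap B_1$. Set $\rho:=\mbox{dist}(y,\partial C)\in[c,1)$. Then $B_\rho(y)\subseteq C$ touches $\partial C$ at some $x_0$.
\item Any blow-up $C_0$ of $C$ at $x_0$ is a minimal cone with $0\in\partial C_0$ and $C_0\supseteq H:=\{x\cdot(y-x_0)>0\}$.
\item The Euler--Lagrange inequality for $s$-minimal sets at a point with an interior tangent ball \cite[Theorem~5.1]{MR2675483} gives $\limsup_{\delta\to0}\int_{B_\delta^c}(\chi_{C_0}-\chi_{C_0^c})|x|^{-n-s}\,dx\leq0$.
\item By the symmetry of $H$, this quantity equals $\lim_{\delta\to0}2\int_{(C_0\setminus H)\cap B_\delta^c}|x|^{-n-s}\,dx$, which is positive unless $|C_0\setminus H|=0$. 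Hence $C_0=H$.
\item By Theorem~\ref{th::regular_tangent_cone}, $\partial C$ is then $\cont^{1,\alpha}$ near $x_0$.
\end{itemize}
If $x_0=0$, homogeneity already gives $C\supseteq\{x\cdot y>0\}$, and the same computation shows $C$ is a half-space. Otherwise rescale to $|x_0|=1$. With this, your proof closes.

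\textbf{The equality case also simplifies.} Once $C$ is a cone with respect to the regular point $x_0$, it coincides with its own blow-up at $x_0$, which is a half-space. You need neither a dense set of regular points nor the dimension-reduction induction. The induction would in any case require transferring the equality $\Xi_{C'\times\R}=\Xi_\Pi$ down to $\R^{n-1}$, which you do not address.
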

		
		In other words, Theorem~\ref{th::energy_gap} states that half-spaces are the tangent cones with least local energy. Moreover, there exists an energy gap between regular and singular tangent cones. 
		We remark that the same argument as the one employed to prove ~\cite[Theorem~9.6]{MR2675483} shows Theorem~\ref{th::energy_gap}, up to using the clean ball condition for almost minimizers (see Corollary~\ref{cor::clean_ball}). 
		
		\begin{proof}[Proof of Theorem~\ref{th::regular_tangent_cone}]
			Let $\{\lambda_k\}_k$ be a sequence of positive real  numbers such that $\lambda_k\to+\infty$, as $k\to+\infty$. Let us consider the blow-up sequence~$\{\lambda_k E\}_k$. Then, by assumption, we have that~$\lambda_k E\to \Pi$ in~$L^1_{\loc}$, for some half-space~$\Pi$. By Corollary~\ref{cor::boundary_conv}, we deduce that $\lambda_k E\to \Pi$ also uniformly.
			
			Moreover, up to rotations, we can always assume that~$\Pi=\{x_n<0\}$. 	
			Thus, there exists some~$\overline{k}\in\N$ such that for every~$k\geq\overline{k}$
			$$ \partial(\lambda_k E)\cap B_1 \subseteq \{|x_n|<\eta_0\}, $$
			where $\eta_0$ is as in Theorem~\ref{th::holder_reg_almost_min}.
			
			In particular, thanks to Theorem~\ref{th::holder_reg_almost_min}, we infer that ~$\partial(\lambda_{\overline{k}} E)$ is a~$\cont^{1,\alpha}$-regular surface in~$B_{1/2}$, and hence that~$\partial E$ is a~$\cont^{1,\alpha}$-regular surface in~$B_{1/(2\lambda_{\overline{k}})}$.
		\end{proof}

		
		\appendix
		
		\section{Quasi-minimality implies almost minimality (Lemma~\ref{lemma::quasi_are_almost})} \label{sec::quasi_are_almost}
		
		This section is devoted to proving Lemma~\ref{lemma::quasi_are_almost}. 
		
		First, observe that, if $E$ is a $(K,\beta)$-quasi minimal set in $B_1$, then, for any ball $\widetilde{B}_r \Subset B_1$,
		\begin{equation} \label{eq::per_bound_quasi_min}
			\Per_s(E,\widetilde{B}_r) \leq \widetilde{K}r^{n-s},
		\end{equation}
		for some $\widetilde{K}>0$ depending only on $n$ and $s$. Indeed, using quasi minimality with respect to the competitor $F_r:=E\setminus \widetilde{B}_r$, we find
		\begin{equation*}
			\Per_s(E,\widetilde{B}_r) \leq (1+Kr^\beta) \Per_s(F_r,\widetilde{B}_r) \leq \widetilde{K}\mathcal{L}_s(\widetilde{B}_r,\widetilde{B}_r^c),
		\end{equation*}
		up to renaming $\widetilde{K}$.
		
		Let now $F\subseteq\R^n$ be any set such that $E\Delta F\subseteq \widetilde{B}_r$, for some ball $\widetilde{B}_r \Subset B_1$. Therefore, by quasi minimality, we have
		\begin{equation*}
			\frac{1}{1+Kr^\beta}\Per_s(E,\widetilde{B}_r) \leq \Per_s(F,\widetilde{B}_r) 
		\end{equation*}
		which, together with~\eqref{eq::per_bound_quasi_min},reads
		\begin{equation*}
			\Per_s(E,\widetilde{B}_r) \leq \Per_s(F,\widetilde{B}_r) + Kr^\beta \Per_s(E,\widetilde{B}_r) \leq \Per_s(F,\widetilde{B}_r) + \widetilde{K}r^{n-s+\beta},
		\end{equation*}
		establishing the desired result.		
		
		
		\section{Proof of the H\"older estimates (Lemma~\ref{lemma::holder_est})} \label{sec::holder_est_proof}
		
		In this section, we present a proof of Lemma~\ref{lemma::holder_est}. We refer to \cite[Lemma 2.20]{MR5015001} for more details (when $\gamma=0$). However, we stress that the argument does not depend explicitly on $\gamma$.
		
		Let~$k\in\N$, and take~$x_0\in A_k^r\cap\{|x'|<r/2\}$. Define 
		$$ \Sigma_{k,j}:=A_k^r \cap \left\{|x'-x_0'|<\frac{1}{2}\left(\frac{\delta_0}{2}\right)^j\right\}.$$
		Notice that, since 
		\begin{equation*}
			\partial E^*_k \cap C_{2^h}\subseteq \big\{|x\cdot\nu_h^k|\leq 2^{h(1+\alpha)}\big\},\quad{\mbox{for all }}h\in\{ 0,\dots,h_k\},
		\end{equation*}
		then also
		\begin{equation*}
			\Sigma_{k,j} = \Sigma_{k,j}\cap C_{2^h} \subseteq \big\{|x\cdot\nu_h^k|\leq 2^{h(1+\alpha)}\big\},\quad{\mbox{for all }}h\in\{ 0,\dots,h_k\}.
		\end{equation*}

		Therefore, applying iteratively Harnack's Inequality in Theorem~\ref{th::harnack_ineq} (see \cite[Corollary 2.17]{MR5015001}), we infer that
		\begin{equation} \label{eq::Sigma_trap}
			\Sigma_{k,j} \subseteq \left\{|x_n-z_j|< 2^{2+\alpha}\left(1-\frac{\delta_0^2}{2}\right)^j\right\},\qquad{\mbox{for all }}j\in\{ 0,\dots, j_k\} ,
		\end{equation}
		for some $z_j=z_j(x_0)$, with $|z_j|\leq1$ and 
		\begin{equation} \label{eq::z_j_nested}
			z_j-\left(1-\frac{\delta_0^2}{2}\right)^j \leq z_{j+1}-\left(1-\frac{\delta_0^2}{2}\right)^j \leq  z_{j+1}+\left(1-\frac{\delta_0^2}{2}\right)^j \leq  z_{j}+\left(1-\frac{\delta_0^2}{2}\right)^j ,
		\end{equation} 
		for all $j$ (and all $k$).
		Notice also that~$j_k\to+\infty$ as~$k\to+\infty$. 
		
		We claim that, for every~$x\in A_k^r\cap\{|x'|<r/2\}$,
		\begin{equation}\label{261BIS00}
			|x_n-z_{j_k}|<\theta\max\{|x'-x_0'|,b_k\}^\eta ,
		\end{equation}
		where~$b_k$ is an infinitesimal sequence of positive real numbers.
		
		Let us first observe that, given~$x \in A_k^r\cap\{|x'|<r/2\}$, we have that~$|x'-x_0'|<r$. Then, we distinguish three cases depending on whether~$|x'-x_0'|$ is in between~$r$ and~$\frac{1}{2}$, in between~$\frac{1}{2}$ and~$\frac{1}{2}\left(\frac{\delta_0}{2}\right)^{j_k}$, or smaller than~$\frac{1}{2}\left(\frac{\delta_0}{2}\right)^{j_k}$. 
		
		Let us start from the case~$\frac{1}{2}\left(\frac{\delta_0}{2}\right)^{j_k}<|x'-x_0'|<\frac{1}{2}$. In this situation, there exists~$j\leq j_k$ in~$\N$ such that
		$$ \frac{1}{2}\left(\frac{\delta_0}{2}\right)^{j+1}\leq|x'-x_0'|<\frac{1}{2}\left(\frac{\delta_0}{2}\right)^{j}.$$
		Rearranging the latter, we obtain
		$$ j< \frac{\log(2|x'-x_0'|)}{\log(\delta_0/2)}\leq j+1.$$
		Moreover,~\eqref{eq::Sigma_trap} yields~$|x_n-z_j|\leq 2^{2+\alpha} \left( 1-\frac{\delta_0^2}{2}\right)^j$.
		Therefore, setting
		\begin{equation}\label{yrueiow5674836547839gfhdjvfbdns}
			\eta:=\frac{\log(1-\frac{\delta_0^2}{2})}{\log(\delta_0/2)}\in(0,1),
		\end{equation} 
		we obtain that
		\begin{align*}
			\left(1-\frac{\delta_0^2}{2}\right)^j 
			\leq 
			\left(1-\frac{\delta_0^2}{2}\right)^{\eta\frac{\log(2|x'-x_0'|)}{\log(1-\delta_0^2/2)}-1}
			=\left(1-\frac{\delta_0^2}{2}\right)^{-1}2^\eta|x'-x_0'|^\eta . 
		\end{align*}
		Thus, defining~$\theta:=2^\eta(1+2^{2+\alpha}) \left(1-\frac{\delta_0^2}{2}\right)^{-1}$ and also recalling \eqref{eq::z_j_nested}, we have that
		\begin{equation*} 
			|x_n-z_{j_k}| \leq  |x_n-z_j|+|z_j-z_{j_k}| \leq (1+2^{2+\alpha})\left(1-\frac{\delta_0^2}{2}\right)^j \leq \theta |x'-x_0'|^\eta,
		\end{equation*}
		which gives~\eqref{261BIS00} in this case.
		
		Now, if~$\frac{1}{2}\leq|x'-x_0'|<r$, we have that~$|x'-x_0'|^\eta\geq2^{-\eta}$, where~$\eta$ is as in~\eqref{yrueiow5674836547839gfhdjvfbdns}. Also,
		$$ |x_n-z_{j_k}|\leq |x_n|+|z_{j_k}| \leq 2C.$$
		Thus,
		$$ |x_n-z_{j_k}|\leq  \theta |x'-x_0'|^\eta,$$
		provided that~$\theta\geq 2^{1+\eta}C$.
		
		As last, we deal with the case~$|x'-x_0'|\leq \frac{1}{2}\left(\frac{\delta_0}{2}\right)^{j_k}$. Since $\Sigma_{k,j_k} \subseteq  \Sigma_{k,j_k-1}$, recalling \eqref{eq::z_j_nested} once again, we deduce that
		\begin{equation*}
			|x_n-z_{j_k}|<2^{2+\alpha}\left(1-\frac{\delta_0^2}{2}\right)^{j_k-1} + |z_{j_k-1}-z_{j_k}| \leq (1+2^{2+\alpha})\left(1-\frac{\delta_0^2}{2}\right)^{j_k-1}.
		\end{equation*}
		Defining $b_k:=\left(1-\frac{\delta_0^2}{2}\right)^{j_k}$, we obtain $b_k\to0$ as~$k\to+\infty$, and
		\begin{equation*}
			|x_n-z_{j_k}|< (1+2^{2+\alpha})\left(1-\frac{\delta_0^2}{2}\right)^{-1}b_k \leq\theta b_k^\eta,
		\end{equation*}
		which completes the proof of~\eqref{261BIS00}.
		
		From \eqref{261BIS00}, we infer that~$A_k^r\cap \left\{|x'-x_0'|<\frac{r}{2}\right\} $ lies above the graph of the H\"older continuous function 
		$$ f_k^-[x_0](x') := z_{j_k}- \theta\max\{b_k,|x'-x_0'|\}^{\eta},$$
		and below the graph of the H\"older continuous function 
		$$ f_k^+[x_0](x') := z_{j_k}+ \theta\max\{b_k,|x'-x_0'|\}^{\eta} .$$
		Taking the infimum and the supremum respectively over all~$x_0\in A_k^r\cap \left\{|x'-x_0'|<\frac{r}{2}\right\} $, we conclude that~$A_k^r\cap \left\{|x'-x_0'|<\frac{r}{2}\right\} $ lies between the graphs of the H\"older continuous functions
		\begin{align*}
			&g_k^-:=\inf_{x_0\in A_k^r\cap \left\{|x'-x_0'|<\frac{r}{2}\right\} } f_k^-[x_0]\\
			\text{and }\qquad &g_k^+:=\sup_{x_0\in A_k^r\cap \left\{|x'-x_0'|<\frac{r}{2}\right\} } f_k^+[x_0], 
		\end{align*}
		concluding the proof.
		
		
		\section{Monotonicity formula (Proposition~\ref{prop::Phi_monotone})} \label{sec::monotonicity}
		
		In this section, we prove the monotonicity formula stated in Proposition~\ref{prop::Phi_monotone}. To this purpose, we recall the extension technique presented in~\cite{MR2354493}.
		Let
		$$\R^{n+1}_+:=\big\{(x,z)\in\R^{n+1}\;{\mbox{ s.t. }}\; x\in\R^n,\ z\in[0,+\infty)\big\} .$$ 
		the~$(n+1)$-dimensional half-space. Given a bounded Lipschitz open domain~$\Omega\subseteq\R^{n+1}$, let also
		$$ \Omega_0:=\Omega\cap\{z=0\} \qquad{\mbox{and}}\qquad \Omega_+:=\Omega\cap\{z>0\} .$$
		
		For any set~$E\subseteq\R^n\times\{0\}$, we define the~$(n+1)$-dimensional extension of~$u:=\chi_E-\chi_{E^c}$ as the convolution
		\begin{equation}\label{TILDEUDEF}
			\widetilde{u}(x,z):=u*P(x,z),
		\end{equation}
		where
		\begin{align*}
			&P(x,z):=C_P \frac{z^s}{(x^2+z^2)^{\frac{n+s}{2}}},\\
			\text{and}\quad&C_P:=\left(\int_{\R^n}\frac{dx}{(1+x^2)^{(n+2)/2}}\right)^{-1}.
		\end{align*}
		
		Then, we have the following equivalence:
		
		\begin{prop}[{\cite[Proposition 1.7]{MR5015001}}] \label{prop::char_almost_min_ext}
			$E$ is a~$(\Lambda,\gamma)$-almost minimal set in~$B_1$ if and only if there exists a positive constant~$\widetilde{c}_{n,s}$ such that the extension~$\widetilde{u}$ of~$u=\chi_E-\chi_{E^c}$ to~$\R^{n+1}_+$ satisfies
			\begin{equation} \label{eq::char_almos_min_ext}
				\int_{\Omega_+} z^{1-s}|\nabla\widetilde{u}|^2\,dx\,dz \leq \int_{(\widetilde{B}_r^{n+1})_+} z^{1-s}|\nabla\overline{v}|^2\,dx\,dz + 8\widetilde{c}_{n,s}\Lambda r^{n-s+\gamma}
			\end{equation}
			for every $\Omega \subseteq \R^{n+1}$ such that~$\Omega_0 \subseteq  \widetilde{B}_r\times\{0\} \Subset B_1\times\{0\}$, for some ball of radius $r>0$, and for every function~$\overline{v}:\R^{n+1}_+\to\R$ such that $\mbox{supp}(\overline{v}-\widetilde{u}) \subseteq \Omega$ and~$|\overline{v}|=1$ on~$\Omega_0$.
		\end{prop}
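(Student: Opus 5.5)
We prove the two implications separately, using the energy identity that links the $s$-perimeter to the weighted Dirichlet energy of the Caffarelli--Silvestre extension; this is the mechanism behind \cite[Proposition 1.7]{MR5015001}. The identity is a localized version of the facts in \cite[Section 7]{MR2675483}. For $v:\R^{n+1}_+\to\R$ with trace $\chi_F-\chi_{F^c}$, the minimal weighted energy $\int z^{1-s}|\nabla v|^2$ over extensions that agree with $\widetilde u$ outside a domain $\Omega$ with $\Omega_0\subseteq\widetilde B_r\times\{0\}$ differs from a fixed multiple $\widetilde c_{n,s}\Per_s(F,\widetilde B_r)$ by a quantity independent of $F$. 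Here $F$ ranges over sets with $F\Delta E\subseteq\widetilde B_r$, and this normalization of $\widetilde c_{n,s}$ is the one built into \eqref{eq::char_almos_min_ext}. Equivalently, the harmonic (weighted) extension minimizes the localized energy, and energy differences of harmonic extensions are $\widetilde c_{n,s}$ times $s$-perimeter differences. This localized comparison is exactly what \cite[Proposition 1.7]{MR5015001} establishes, and its proof never uses the size of the remainder. So I would quote it and only track how the remainder $\Lambda r^{n-s+\gamma}$ passes through.

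\emph{Almost minimality $\Rightarrow$ \eqref{eq::char_almos_min_ext}.} Take $\Omega$ and $\overline v$ as in the statement, and set $F:=\{\overline v(\cdot,0)=1\}$, so that $F\Delta E\subseteq\widetilde B_r$. First, replace $\overline v$ in $\Omega_+$ by the weighted-harmonic function $w$ with the same boundary data. This only lowers the right-hand side, since $w$ minimizes the energy among functions with that data.

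Next, compare $\widetilde u$ with $w$. The localized identity gives
$$\int z^{1-s}|\nabla \widetilde u|^2-\int z^{1-s}|\nabla w|^2\le \widetilde c_{n,s}\bigl(\Per_s(E,\widetilde B_r)-\Per_s(F,\widetilde B_r)\bigr)+(\text{correction}),$$
where both integrals are taken over a common large half-ball containing $\Omega_+$. The correction comes from the fact that $w$ is harmonic only in $\Omega_+$, not in all of $\R^{n+1}_+$.

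Almost minimality of $E$ gives $\Per_s(E,\widetilde B_r)-\Per_s(F,\widetilde B_r)\le\Lambda r^{n-s+\gamma}$. The correction term is handled exactly as in \cite{MR5015001}, by a cutoff argument on a slightly larger ball. It costs at most a further multiple of the same quantity, which accounts for the constant $8$.

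\emph{\eqref{eq::char_almos_min_ext} $\Rightarrow$ almost minimality.} Given $F$ with $F\Delta E\subseteq\widetilde B_r\Subset B_1$, take $\overline v$ to be the extension of $\chi_F-\chi_{F^c}$, minimizing the weighted energy in a large half-ball $\Omega_+$ with $\Omega_0\subseteq \widetilde B_r\times\{0\}$, and equal to $\widetilde u$ outside. Then $|\overline v|=1$ on $\Omega_0$. Applying \eqref{eq::char_almos_min_ext} and reading the energy difference back as $\widetilde c_{n,s}$ times the $s$-perimeter difference via the same identity gives $\Per_s(E,\widetilde B_r)\le \Per_s(F,\widetilde B_r)+C\Lambda r^{n-s+\gamma}$. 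Since $\Per_s(E,B_1)-\Per_s(F,B_1)=\Per_s(E,\widetilde B_r)-\Per_s(F,\widetilde B_r)$, this is \eqref{eq::almost_min}, up to renaming $\Lambda$ by a dimensional factor.

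\emph{Main obstacle.} The only genuine work is making the localized energy identity rigorous. Both $\widetilde u$ and $\overline v$ have infinite energy globally, since their traces are $\pm1$. So the identity must be phrased through energy differences on bounded half-balls, with boundary terms that cancel because $\overline v=\widetilde u$ near $\partial\Omega_+$. I would take this directly from \cite[Section 7]{MR2675483} and \cite[Proposition 1.7]{MR5015001}. The exponent $\gamma$ only enters through the remainder $\Lambda r^{n-s+\gamma}$, and that remainder is carried along linearly in both directions.
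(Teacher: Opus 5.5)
Your top-level plan matches the paper's. The paper gives no argument of its own: it cites \cite[Proposition 1.7]{MR5015001} (the case $\gamma=s$, built on \cite[Section 7]{MR2675483}) and notes that the exponent of the remainder plays no role. However, the mechanism you describe for carrying the remainder through is wrong in both directions, so your sketch does not prove the statement as written.

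\emph{The identity and the forward direction.} Your ``localized identity'' fails on a fixed bounded $\Omega$. The minimal energy among functions equal to $\widetilde u$ outside $\Omega$ is in general strictly larger than the global value, by an amount depending on $F$ and $\Omega$. What is true, \cite[Lemma 7.2]{MR2675483}, is one-sided. The global extension $\widetilde v:=P*v$ of $v=\chi_F-\chi_{F^c}$ minimizes the weighted energy among all functions with trace $v$, since for admissible $\overline v$ the difference $\overline v-\widetilde v$ has zero trace and finite energy. Hence
\[
\int_{\Omega_+}z^{1-s}\bigl(|\nabla\overline v|^2-|\nabla\widetilde u|^2\bigr)\,dx\,dz\geq\int_{\R^{n+1}_+}z^{1-s}\bigl(|\nabla\widetilde v|^2-|\nabla\widetilde u|^2\bigr)\,dx\,dz ,
\]
and the right-hand side is a fixed normalization constant times $\Per_s(F,\widetilde B_r)-\Per_s(E,\widetilde B_r)$.

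This settles the forward direction at once. The ``correction'' you attribute to $w$ being harmonic only in $\Omega_+$ is nonnegative, so you simply drop it. Your proposed treatment could not work anyway: a cutoff error does not scale with $\Lambda$, so it cannot be bounded by a multiple of $\Lambda r^{n-s+\gamma}$ (take $\Lambda=0$). Likewise, the factor $8$ is pure normalization, not the price of a cutoff. For $v=\chi_F-\chi_{F^c}$ one has $(v(x)-v(y))^2\in\{0,4\}$, and the symmetric double integral counts each pair separated by $\partial F$ twice.

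\emph{The converse: a missing step.} Your $\Omega_+$-minimizer on a fixed bounded domain satisfies only the lower bound above, which points the wrong way. To turn \eqref{eq::char_almos_min_ext} into $\Per_s(E,\widetilde B_r)\leq\Per_s(F,\widetilde B_r)+\Lambda r^{n-s+\gamma}$, you need admissible competitors whose energy excess is \emph{at most} the global value plus $o(1)$. The global extension $\widetilde v$ is not admissible: it differs from $\widetilde u$ throughout $\R^{n+1}_+$, including just above $\{z=0\}\setminus\widetilde B_r$.

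The standard fix is as follows.
\begin{itemize}
\item Take $\overline v_R:=\widetilde u+\eta_R(\widetilde v-\widetilde u)$, where $\eta_R$ cuts off at $|(x,z)|\sim R$ and in a thin layer above $\{z=0\}$ away from $\overline{F\Delta E}$.
\item Use domains $\Omega^R$ that exhaust $\R^{n+1}_+$ but meet $\{z=0\}$ only inside $\widetilde B_r$. A ``large half-ball with $\Omega_0\subseteq\widetilde B_r\times\{0\}$'' does not exist.
\item Show that the energy excess tends to the global value as $R\to+\infty$. This uses that $P*(v-u)$ decays like $z^s|(x,z)|^{-n-s}$ at infinity and vanishes like $z^s$ near $\{z=0\}$ away from $\overline{F\Delta E}$.
\item If $F\Delta E$ is not compactly contained in $\widetilde B_r$, enlarge the ball slightly and let it shrink back.
\end{itemize}
Only after this limit, and with the exact normalization constant, do you recover almost minimality with the \emph{same} $\Lambda$. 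Your ``up to renaming $\Lambda$'' does not give the stated equivalence.
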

		
		In other words, Proposition~\ref{prop::char_almost_min_ext} states that a set $E$ is almost minimal if and only if its extension $\tilde{u}$ is an almost minimizer for a suitable energy.
		
		\begin{rem} \label{rem::hat_Lambda}
			We point out that the constant~$\widetilde{c}_{n,s}$ in Proposition~\ref{prop::char_almost_min_ext} here is the same as in~\cite[Lemma~7.2]{MR2675483}. 
			Since it will appear several times in the forthcoming discussion, given~$\Lambda\geq0$, we define the constant
			$$ \hat{\Lambda} := 8\widetilde{c}_{n,s}\Lambda.$$
		\end{rem}
		
		Now, we consider a set~$E\subseteq\R^n$ and~$R>0$. Recall from~\eqref{eq::defphimono} that for every~$r\in(0,R)$, 
		\begin{equation} 
			\Phi_E(r) := r^{s-n}\int_{B_r^+} z^{1-s}|\nabla\widetilde{u}|^2\,dx\,dz + \frac{n-s}{\gamma}\hat{\Lambda}r^{\gamma}.
		\end{equation}
		For later convenience, we also define
		\begin{equation} \label{eq::def_Xi}
			\Xi_E(r) :=r^{s-n}\int_{B_r^+} z^{1-s}|\nabla\widetilde{u}|^2\,dx\,dz.
		\end{equation}
		
		The proof of Proposition~\ref{prop::Phi_monotone} is a modification of the one for the case $\gamma=s$, which can be found and is contained in \cite[Section 3]{MR5015001}(as well as a proof of the analogue of Proposition~\ref{prop::char_almost_min_ext}). For the convenience of the reader, we sketch the main steps below. The result is a consequence of the following intermediate results.
		
		\begin{prop} \label{prop::Phi_abs_cont}
			Let $0<r_1<r_2<R$. Then, the function $\Phi_E$ is absolutely continuous in $[r_1,r_2]$.
		\end{prop}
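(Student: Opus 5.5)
The plan is to write $\Phi_E=\Xi_E+\frac{n-s}{\gamma}\hat\Lambda r^\gamma$ and prove absolute continuity of each summand on $[r_1,r_2]$. The second summand is smooth on $[r_1,r_2]\subset(0,R)$, since $r_1>0$, so nothing needs to be done there. The factor $r^{s-n}$ is also smooth and bounded on $[r_1,r_2]$. The problem therefore reduces to showing that
\[
F(r):=\int_{B_r^+} z^{1-s}|\nabla\widetilde u|^2\,dx\,dz
\]
is absolutely continuous on $[r_1,r_2]$. The product of two absolutely continuous functions on a compact interval is again absolutely continuous.

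For $F$, first note that $z^{1-s}|\nabla\widetilde u|^2\in L^1(B_R^+)$. This is the standard fact that the extension of $\chi_E-\chi_{E^c}$ has locally finite weighted Dirichlet energy whenever $E$ has locally finite $s$-perimeter, as in \cite[Section~7]{MR2675483}. Local finiteness of the $s$-perimeter holds here by almost minimality, comparing with $E\setminus\widetilde B_r$ exactly as in \eqref{eq::per_bound_quasi_min}, or from Proposition~\ref{prop::char_almost_min_ext}. Then the coarea formula in polar coordinates of $\R^{n+1}$ gives
\[
F(r)=\int_0^r g(\rho)\,d\rho,\qquad g(\rho):=\int_{\partial B_\rho\cap\{z>0\}} z^{1-s}|\nabla\widetilde u|^2\,d\haus{n},
\]
with $g\in L^1(0,R)$ by Fubini. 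An indefinite integral of an $L^1$ function is absolutely continuous, and $F'(r)=g(r)$ for a.e. $r$. This derivative formula is exactly what will later be used to differentiate $\Phi_E$ in the monotonicity argument.

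The only point needing care is the integrability of the energy near $\{z=0\}$, where $\nabla\widetilde u$ blows up along $\partial E\times\{0\}$. I would handle it by quoting the energy bound $\int_{B_R^+}z^{1-s}|\nabla\widetilde u|^2\le C(\Per_s(E,B_{2R})+R^{n-s})$ from \cite{MR2675483} (cf.\ \cite[Section~3]{MR5015001}), instead of estimating the Poisson kernel directly.
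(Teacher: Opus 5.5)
Your proof is correct and follows the paper's route: the paper also obtains absolute continuity directly from the absolute continuity of the Lebesgue integral, together with the smoothness of the power factors on $[r_1,r_2]$. Your justification that $z^{1-s}|\nabla\widetilde u|^2$ is locally integrable is something the paper takes for granted, and it is a useful addition. When you quote the energy bound, make sure the enlarged ball stays inside $B_1$, where almost minimality gives finite $s$-perimeter. You can arrange this, for instance, by covering with small half-balls near $\{z=0\}$.
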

		\begin{proof}
			It follows immediately from the absolute continuity of the Lebesgue integral and the smoothness of $r^{s-\gamma}$. \qedhere
		\end{proof}
		
		\begin{prop} \label{prop::Phi_monotone_ae}
			The function $\Phi_E$ is differentiable for almost every $r\in(0,R)$, and
			\begin{equation*} \label{eq::Phi_monotone_ae}
				\Phi_E'(r)\geq0,\quad\mbox{for a.e. }r\in(0,R).
			\end{equation*}
		\end{prop}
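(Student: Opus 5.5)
We follow the scheme of \cite[Section~3]{MR5015001} and of \cite[Theorem~8.1]{MR2675483}: we compare $\widetilde{u}$ on $B_r^+$ with its $0$-homogeneous extension from $\partial B_r^+$, and then use the almost minimality of the extension given in Proposition~\ref{prop::char_almost_min_ext}. Set
\[
I(r):=\int_{B_r^+} z^{1-s}|\nabla\widetilde{u}|^2\,dx\,dz .
\]
By Proposition~\ref{prop::Phi_abs_cont} and the coarea formula, $I$ is absolutely continuous and
\[
I'(r)=\int_{(\partial B_r)^+} z^{1-s}|\nabla\widetilde{u}|^2\,d\haus{n}
\quad\text{for a.e. } r .
\]
Hence $\Phi_E$ is differentiable almost everywhere, with
\[
\Phi_E'(r)=r^{s-n}\Big(I'(r)-\frac{n-s}{r}\,I(r)\Big)+(n-s)\hat{\Lambda}\,r^{\gamma-1}.
\]
The coefficient $\frac{n-s}{\gamma}$ in \eqref{eq::defphimono} is chosen exactly so that differentiating $\frac{n-s}{\gamma}\hat\Lambda r^\gamma$ gives $(n-s)\hat\Lambda r^{\gamma-1}$. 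So it is enough to prove, for a.e.\ $r$,
\[
(n-s)\,I(r)\le r\,I'(r)+(n-s)\,\hat\Lambda\, r^{n-s+\gamma}. \tag{$*$}
\]

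\textbf{Step 1: competitor.} For a.e.\ small $r$, define
\[
\overline{v}(x,z):=\widetilde{u}\Big(\frac{r(x,z)}{|(x,z)|}\Big)\quad\text{in } B_r^+,\qquad \overline{v}:=\widetilde{u}\quad\text{outside } B_r^+ .
\]
Then $\mbox{supp}(\overline v-\widetilde u)\subseteq \overline{B_r}$ and $|\overline v|=1$ on $B_r\times\{0\}$, because $u=\pm1$ there and radial projection keeps the boundary values $\pm1$. To apply Proposition~\ref{prop::char_almost_min_ext} with $\Omega=B_{r'}$, we take $r'\downarrow r$ (or enlarge $\Omega$ slightly), so that $\Omega_0\subseteq \widetilde B_{r'}\Subset B_1$. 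This gives
\[
I(r)\le \int_{B_r^+} z^{1-s}|\nabla \overline v|^2+\hat\Lambda\, r^{n-s+\gamma},
\]
using continuity in $r'$.

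\textbf{Step 2: energy of the homogeneous extension.} Since $\overline v$ is $0$-homogeneous, its gradient at distance $\rho$ is $r/\rho$ times the tangential gradient of $\widetilde u$ on $\partial B_r$, and the weight $z^{1-s}$ scales like $\rho^{1-s}$. Integrating in $\rho$ over $(0,r)$ with the factor $\rho^{n}$ gives
\[
\int_{B_r^+} z^{1-s}|\nabla\overline v|^2=\frac{r}{n-s}\int_{(\partial B_r)^+} z^{1-s}|\nabla_\tau \widetilde u|^2\,d\haus{n}\le \frac{r}{n-s}\,I'(r).
\]
Combining this with Step 1 and multiplying by $n-s$ gives exactly $(*)$, and therefore $\Phi_E'(r)\ge0$ for a.e.\ $r$.

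\textbf{Main obstacle.} The delicate point is justifying Step 1. The competitor $\overline v$ must be admissible: it needs finite weighted energy, which holds for a.e.\ $r$ because $I'(r)<\infty$ almost everywhere. It must also satisfy the constraint $\Omega_0\subseteq\widetilde B_r\Subset B_1$, which holds for $r$ small and with a slightly larger ball, at the cost of a factor $(r')^{n-s+\gamma}\to r^{n-s+\gamma}$. We will also check that the error exponent really matches. This works because $\gamma>0$, so that $r^{\gamma-1}$ is integrable and the correction term in $\Phi_E$ is finite. This is precisely why the coefficient is $\frac{n-s}{\gamma}$ and not a logarithm. Everything else is the computation of \cite[Section~3]{MR5015001}, carried out with the exponent $\gamma$ in place of $s$.
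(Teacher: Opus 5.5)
Your proposal is correct and follows the paper's proof essentially step by step. You compute $\Phi_E'$ via absolute continuity, reduce to $(n-s)I(r)\le rI'(r)+(n-s)\hat\Lambda r^{n-s+\gamma}$, compare $\widetilde u$ with its $0$-homogeneous extension from $\partial B_r^+$ using Proposition~\ref{prop::char_almost_min_ext}, and evaluate that extension's energy as $\frac{r}{n-s}\int_{\partial B_r^+}z^{1-s}|\nabla^T\widetilde u|^2$. You pass quickly over the requirement that, for a.e.\ $r$, the competitor has a well-defined trace equal to $\pm1$ on $B_r\times\{0\}$; the paper handles this with the good sets $\mathcal{W}$ and $\mathcal{G}_r$, deferring to \cite[Lemma 3.3]{MR5015001}. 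Your $r'\downarrow r$ device for the compact-containment condition is a small extra precaution that the paper omits.
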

		
		\begin{proof}
			Thanks to Proposition~\ref{prop::Phi_abs_cont} and the Fundamental Theorem of Calculus for the Lebesgue integral (see e.g. \cite[Theorem 7.20]{rudin}), $\Phi_E$ is differentiable for almost any $r\in(0,R)$ with derivative
			\begin{align*}
				\Phi_E'(r) 
				&= r^{s-n} \int_{\partial B_r^+} z^{1-s}|\nabla\widetilde{u}(x,z)|^2\, d\haus{n}(x,z) \\
				&\qquad\qquad-(n-s)r^{s-n-1} \int_{B_r^+} z^{1-s}|\nabla\widetilde{u}(x,z)|^2 \,dx\,dz +(n-s)\,\hat{\Lambda} r^{\gamma-1} \\
				&= (n-s)r^{s-n-1} \Bigg(\frac{r}{n-s}\int_{\partial B_r^+} z^{1-s}|\nabla\widetilde{u}(x,z)|^2\, d\haus{n}(x,z) \\	
				&\qquad\qquad - \int_{B_r^+} z^{1-s}|\nabla\widetilde{u}(x,z)|^2 \,dx\,dz +\hat{\Lambda} r^{n-s+\gamma}\Bigg).
			\end{align*}
			Therefore, the positivity of $\Phi_E'$ will follow, if we show that
			\begin{equation} \label{eq::phi_monotone__equiv}
				\frac{r}{n-s}\int_{\partial B_r^+} z^{1-s}|\nabla\widetilde{u}(x,z)|^2\, d\haus{n}(x,z) - \int_{B_r^+} z^{1-s}|\nabla\widetilde{u}(x,z)|^2 \,dx\,dz + \hat{\Lambda} r^{n-s+\gamma-1}\geq0.
			\end{equation}
			
			With this aim, we consider the function
			\begin{equation*}
				\widetilde{v}_r(x,z):=
				\begin{cases}
					\widetilde{u}\left(\frac{rx}{|(x,z)|},\frac{rz}{|(x,z)|}\right) ,&\quad \text{  if }0<|x|^2+z^2\leq r^2,\\
					\widetilde{u}(x,z) ,&\quad \text{ if }|x|^2+z^2> r^2.
				\end{cases}
			\end{equation*}
			
			We claim that there exists a sets $\mathcal{W}\subseteq(0,+\infty)$, with $\haus{1}\left((0,+\infty)\setminus\mathcal{W}\right)=0$, such that for all $r\in{\mathcal{W}}$ there exists $\mathcal{G}_r\subseteq\R^n$ of full measure in $\R^n$ such that if~$x\in \mathcal{G}_r$, then 
			\begin{equation*}
				\widetilde{v}_r(x,0)=
				\begin{cases}
					\widetilde{u}\left(\frac{rx}{|x|},0\right)=\chi_E\left(\frac{rx}{|x|}\right)-\chi_{E^c}\left(\frac{rx}{|x|}\right),\quad&\text{if }0<|x|\leq r,\\
					\widetilde{u}(x,0)=\chi_E(x)-\chi_{E^c}(x),\quad&\text{if }|x|> r,
				\end{cases}
			\end{equation*}
			is well-defined (and thus $\widetilde{v}_r(x,0)\in\{-1,1\}$). The existence of $\mathcal{W}$ and $\mathcal{G}_r$ can be deduced with a careful measure-theoretic analysis of the set where the trace of $\widetilde{v}_r$ is defined. For a more detailed discussion, we refer to the proof of Lemma 3.3 in \cite{MR5015001}.
			
			In what follows, we pick $r\in(0,R)\cap{\mathcal{W}}$, and we write $\widetilde{v}:=\widetilde{v}_r$ and $F:=F_r$ for simplicity.
			Then, thanks Proposition~\ref{prop::char_almost_min_ext}, we infer that 
			\begin{equation}\label{eq::monotone_1}
				\int_{B_r^+} z^{1-s}|\nabla\widetilde{v}(x,z)|^2\,dx\,dz
					\geq \int_{B_r^+} z^{1-s}|\nabla\widetilde{u}(x,z)|^2\,dx\,dz - \hat{\Lambda}r^{n-s+\gamma}
			\end{equation}
			
			Since~$\widetilde{v}$ is constant along the radial direction in $B_r\setminus\{0\}$ by construction, we have that
			\begin{equation*}
				\nabla\widetilde{v}(x,z) = \frac{r}{|(x,z)|} \nabla^T \widetilde{u}\left(\frac{r}{|(x,z)|}x,\frac{r}{|(x,z)|}z\right),\quad \mbox{in }B_r\setminus\{0\},
			\end{equation*}
			where~$\nabla^T$ is the tangential component of the gradient~$\nabla$ with respect to the unitary sphere. 
			
			Therefore, by mean of Fubini-Tonelli's Theorem, we deduce that
			\begin{equation*}
				\begin{split}
					\int_{B_r^+} z^{1-s}|\nabla\widetilde{v}(x,z)|^2\,dx\,dz 
					&= \int_{0}^{r}\int_{\partial B_t^+} z^{1-s}|\nabla\widetilde{v}(x,z)|^2\,d\haus{n}(x,z)\, dt \\
					&= \int_{0}^{r}\int_{\partial B_t^+} \frac{r^2}{t^2}z^{1-s}\left|\nabla^T\widetilde{u}\left(\frac{r}{t}x,\frac{r}{t}z\right)\right|^2\,d\haus{n}(x,z)\, dt.
				\end{split}
			\end{equation*}
			Adopting the change of variable~$(\hat{x},\hat{z}):=\left(\frac{r}{t}x,\frac{r}{t}z\right)$, we obtain that
			\begin{equation} \label{eq::monotone_2}
				\begin{split}
					\int_{B_r^+} z^{1-s}|\nabla\widetilde{v}(x,z)|^2\,dx\,dz 
					& = \int_{0}^{r}\int_{\partial B_r^+} \left(\frac{t}{r}\right)^{n-2}\left(\frac{t}{r}\hat{z}\right)^{1-s}|\nabla^T\widetilde{u}(\hat{x},\hat{z})|^2\,d\haus{n}(\hat{x},\hat{z})\, dt\\
					& = \int_{0}^{r} \left(\frac{t}{r}\right)^{n-1-s}\,dt \int_{\partial B_r^+} \hat{z}^{1-s}|\nabla^T\widetilde{u}(\hat{x},\hat{z})|^2\,d\haus{n}(\hat{x},\hat{z}) \\
					& = \frac{r}{n-s} \int_{\partial B_r^+} \hat{z}^{1-s}|\nabla^T\widetilde{u}(\hat{x},\hat{z})|^2\,d\haus{n}(\hat{x},\hat{z}).
				\end{split}
			\end{equation}
			
			Plugging~\eqref{eq::monotone_2} into~\eqref{eq::monotone_1}, we obtain that
			\begin{equation*}
				\begin{split}
					\frac{r}{n-s} \int_{\partial B_r^+} z^{1-s}|\nabla \widetilde{u}(x,z)|^2\,d\haus{n}(x,z)
					&\geq \frac{r}{n-s} \int_{\partial B_r^+} z^{1-s}|\nabla^T\widetilde{u}(x,z)|^2\,d\haus{n}(x,z)\\
					&\geq \int_{B_r^+} z^{1-s}|\nabla \widetilde{u}|^2\,dx\,dz - \hat{\Lambda} r^{n-s+\gamma}\  ,
				\end{split}
			\end{equation*}
			which gives~\eqref{eq::phi_monotone__equiv}, as desired. 
		\end{proof}

		
		\section{A note on $Q$-quasi minimality} \label{sec::Q_min}
		
		In this section we focus on $Q$-quasi minimal sets, for some $Q>1$. In particular, we prove that $Q$-quasi minimizers for the $s$-Perimeter still satisfy uniform density estimate. However, in general, they are no better than Lipschitz, showing that the assumption $\gamma>0$ in Theorem~\ref{th::holder_reg_almost_min} is necessary.

		\begin{prop}[Uniform density estimates] \label{prop::UDE_Q_min} 
			Let~$E$ be a~$Q$-quasi minimal set in $B_1$, for some $Q>1$.
			Then, there exist constants~$r_0>0$ and~$c_0\in(0,1)$, depending only on~$n$, $s$,~$\Lambda$, and~$\gamma$, such that, for any~$x_0\in(\partial E)\cap B_1$ and~$r\in(0,\min\{r_0,\mbox{dist}(x_0,\partial B_1)\})$,
			\begin{equation}\label{eq::unif_dens_estimates_Q_min}
				\begin{split}
					&|E\cap B_r(x_0)|\geq c_0 r^n\\
					\mbox{and}\quad&|E^c\cap B_r(x_0)|\geq c_0 r^n.
				\end{split}
			\end{equation}
		\end{prop}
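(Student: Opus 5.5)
We follow the scheme of Appendix~\ref{sec::proof_UDE} (Lemmas~\ref{lemma::keep_small_density} and~\ref{lemma::empty_ball}). The only modification is that the additive error $\Lambda r^{n-s+\gamma}$ is replaced by the multiplicative factor $Q$. We fix $x_0\in\partial E\cap B_1$. By translation and scaling we may assume $x_0=0$ and work in $B_1$. This is legitimate because $Q$-quasi minimality, i.e.\ \eqref{eq::almgren_def} with $1+Kr^\beta$ replaced by the constant $Q$, is invariant under dilations. Hence no smallness of the radius is ever needed, and all constants will depend only on $n$, $s$ and $Q$. We write $a(\rho):=|E\cap B_\rho|/|B_\rho|$.

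\textbf{Step 1: density decay lemma.} For $r\in(\tfrac12,1)$ we set $A_r:=E\cap B_r$ and test quasi minimality against the competitor $F=E\setminus A_r$, which agrees with $E$ outside $B_r$. The interactions that differ between $E$ and $F$ all involve $A_r$. One gets $\Per_s(E,B_r)\geq \mathcal{L}_s(A_r,E^c)$ and $\Per_s(F,B_r)\leq \mathcal{L}_s(A_r,E\setminus A_r)+R$, where $R$ collects the common terms. This is the one point where care is needed: a multiplicative constant is not cancelled by subtraction. To avoid this, we apply \eqref{eq::almgren_def} in the localized form, with the common interactions (those not involving $A_r$) removed. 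By the definition of $\Per_s(\cdot,\widetilde B_r)$ these terms appear identically on both sides, and with $Q\geq1$ they only help after we subtract $Q$ times them from the right-hand side and once from the left. The result is
\[
\mathcal{L}_s(A_r,E^c)\leq Q\,\mathcal{L}_s(A_r,E\setminus A_r)\leq Q\,\mathcal{L}_s(A_r,B_r^c).
\]
Therefore $\mathcal{L}_s(A_r,A_r^c)\leq (1+Q)\mathcal{L}_s(A_r,B_r^c)$. Combining this with the fractional Sobolev inequality \eqref{eq::ude1} and the coarea bound \eqref{eq::ude4}, we obtain
\[
\mu(r)^{1-\frac sn}\leq C(1+Q)\int_0^r\frac{\mu'(\rho)}{(r-\rho)^s}\,d\rho .
\]
Integrating over $r\in(\tfrac12,1)$ exactly as in \eqref{eq::ude6} gives $a(1/2)^{1-s/n}\leq C_Q\,a(1)$. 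Consequently $a(1/2)\leq \delta_0$ whenever $a(1)\leq\delta_0$, provided $\delta_0=\delta_0(n,s,Q)$ is small. In fact the estimate is even better than in Lemma~\ref{lemma::keep_small_density}, since there is no additive term.

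\textbf{Step 2: clean-out and conclusion.} Iterating Step 1 on dyadic balls exactly as in Lemma~\ref{lemma::empty_ball}, we show that $a(1)<\delta_0 2^{-n}/\omega_n$ forces $|E\cap B_{1/2}|=0$. Indeed, a Lebesgue point of density one of $E$ in $B_{1/2}$ would contradict the uniform bound $a(2^{-k})\leq\delta_0$ along the iteration. Since $0\in\partial E$, every ball around $0$ meets $E$ in positive measure. Hence $a(\rho)\geq \delta_0 2^{-n}/\omega_n$ for every admissible $\rho$, which is the first inequality in \eqref{eq::unif_dens_estimates_Q_min}. The definition of $Q$-quasi minimality is symmetric under $E\mapsto E^c$, because $\Per_s(E,\cdot)=\Per_s(E^c,\cdot)$. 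Applying the same argument to $E^c$ gives the second inequality.

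The main obstacle is the first inequality of Step 1. The multiplicative form of \eqref{eq::almgren_def} compares whole localized perimeters, and these include interactions with $\widetilde B_r^c$ that do not cancel. If the subtraction argument above fails, we would argue as in the proof of Lemma~\ref{lemma::quasi_are_almost}. There the additive excess is bounded by $(Q-1)\Per_s(F,B_r)$, and we would estimate $\Per_s(F,B_r)$ by $\mathcal{L}_s(A_r,A_r^c)$-type quantities plus the common part. The common part would then be absorbed using a fixed choice of the outer radius together with an averaging in $r$, so that the inequality survives the integration in \eqref{eq::ude6}.
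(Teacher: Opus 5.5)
\textbf{There is a genuine gap in Step~1, at exactly the point you flagged.} Take $F=E\setminus A_r=E\setminus B_r$ and set $X:=\mathcal{L}_s(E\setminus B_r,E^c\cap B_r)$. A direct computation gives
\[
\Per_s(E,B_r)=\mathcal{L}_s(A_r,E^c)+X,\qquad \Per_s(F,B_r)=\mathcal{L}_s(A_r,E\setminus B_r)+X.
\]
So $Q$-quasi minimality yields only
\[
\mathcal{L}_s(A_r,E^c)\le Q\,\mathcal{L}_s(A_r,E\setminus A_r)+(Q-1)X.
\]
You cannot remove $X$ from the left and $QX$ from the right. Since $Q>1$, the common term does not cancel: it leaves the nonnegative remainder $(Q-1)X$. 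No ``localized form'' of \eqref{eq::almgren_def} discards it.

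This remainder is a nonlocal tail that your hypothesis $a(1)\le\delta_0$ does not control. Suppose, for instance, that $E\supseteq B_1^c$. For $y\in B_1$ we have $\int_{B_1^c}|x-y|^{-n-s}dx\ge c(n,s)$. Hence $X\ge c(n,s)\,|E^c\cap B_{1/2}|\ge c(n,s)>0$ for every $r\in(\frac12,1)$ once $\delta_0<2^{-n}$. Your chain then gives at best $a(1/2)^{1-s/n}\le C\,a(1)+C(Q-1)$, which does not close.

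Your fallback does not repair this. Averaging in $r$ tames only the part of $X$ coming from $E\cap(B_1\setminus B_r)$; this is \eqref{eq::ude5_1_Q_min}--\eqref{eq::ude6_2_Q_min} in the paper. The part coming from $E\setminus B_1$ depends on $E$ at all larger scales.

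\textbf{The missing idea is the multi-scale control of this tail, as in the paper.} The paper does not try to cancel anything. It bounds
\[
\mathcal{L}_s(A_r,E^c)\le\Per_s(E,B_r)\le Q\Per_s(E\setminus B_r,B_r)=Q\,\mathcal{L}_s(E\setminus B_r,B_r),
\]
and splits $E\setminus B_r$ into three pieces:
\begin{enumerate}
\item $E\cap(B_1\setminus B_r)$, contributing $C\int_r^1\mu'(\rho)(\rho-r)^{-s}d\rho$;
\item the dyadic annuli $E\cap(B_{2^{k+1}}\setminus B_{2^k})$, contributing $C2^{-ks}a(2^{k+1})$;
\item $E\setminus B_{2^{\overline{k}}}$, contributing $C2^{-\overline{k}s}$.
\end{enumerate}
Making these small requires two things. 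First, quasi minimality in the larger ball $B_{2^{\overline{k}}}$ with $2^{-\overline{k}s}\le\delta_0$. Second, smallness $a(2^k)\le\delta_0$ at every dyadic scale $k=0,\dots,\overline{k}$, not just at scale $1$ (Lemma~\ref{lemma::keep_small_density_Q_min}).

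Accordingly, the clean-out step must start from smallness at scale $2^{\overline{k}}$, namely $a(2^{\overline{k}})<\delta_0\,\omega_n^{-1}2^{-n(\overline{k}+1)}$. This keeps every dyadic density invoked along the iteration below $\delta_0$ (Lemma~\ref{lemma::empty_ball_Q_min}). The argument then yields the estimate only at radii $\rho$ for which $B_{2^{\overline{k}}\rho}(x_0)$ still lies in the domain.

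In particular, your claim that dilation invariance means no smallness of the radius is needed is wrong. Rescaling preserves the inequality but not the exterior data, whose effect at unit scale is of order $Q-1$.
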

		
		We recall that, , for any set $E$, the density of $E$ in the ball of radius $r$ as
		\begin{equation*}
			a(r):=\frac{|E\cap B_r|}{|B_r|}.
		\end{equation*}
		
		The proof of Proposition~\ref{prop::UDE_Q_min} follows the same structure as the one of Proposition~\ref{prop::UDE}. However, in the setting of $Q$-quasi minimizers, the argument is more delicate as it requires estimating further tail terms. For this reason, and for the sake of clarity, we provide the full details here.
		
		\begin{lemma} \label{lemma::keep_small_density_Q_min}
			Let $\delta_0>0$.
			Let~$E$ be a~$Q$-quasi minimal set in $B_{2^{\overline{k}}}$, with $\overline{k}\in\N$ such that $2^{-\overline{k}s}\leq \delta_0$. If
			\begin{equation} \label{eq::small_density_dyadic}
				a(2^{k})\leq \delta_0, \quad\text{for all }k\in\{0,\dots,\overline{k}\},
			\end{equation}
			then $a(1/2)\leq \delta_0$. 
		\end{lemma}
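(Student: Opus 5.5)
The plan is to reproduce the structure of the proof of Lemma~\ref{lemma::keep_small_density}: a Sobolev inequality, a comparison with a competitor, and a coarea integration. The new feature is that $Q$-quasi minimality only compares perimeters inside the ball, so the interaction of $A_r:=E\cap B_r$ with the part of $E$ far away appears multiplied by $Q$. This interaction cannot be cancelled, and the dyadic hypotheses \eqref{eq::small_density_dyadic} are exactly what will control it.

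\textbf{Step 1 (competitor).} Fix $r\in(\frac12,1)$ and use the competitor $F:=E\setminus A_r$, which satisfies $F\setminus B_r=E\setminus B_r$. By \eqref{eq::almgren_def} (with $K$ replaced by the multiplicative constant $Q$),
\begin{equation*}
\mathcal{L}_s(A_r,E^c)+\mathcal{L}_s(A_r,E\setminus B_r)\le \Per_s(E,B_r)\le Q\,\Per_s(F,B_r)\le Q\big(\mathcal{L}_s(A_r,E\setminus B_r)+\mathcal{L}_s(E^c\cap B_r,E\setminus B_r)\big).
\end{equation*}
Terms common to $\Per_s(E,B_r)$ and $\Per_s(F,B_r)$ will be handled more carefully, by keeping only the parts involving $A_r$. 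The target estimate is
\begin{equation*}
\mathcal{L}_s(A_r,A_r^c)\le C_Q\,\mathcal{L}_s(A_r,E\setminus B_r)+\text{(lower-order terms)}.
\end{equation*}
I expect this bookkeeping to be the main obstacle. The multiplicative constant means $\mathcal{L}_s(A_r,E^c)$ cannot simply be absorbed. If the direct comparison fails, I will instead compare $\Per_s(E,B_{r})$ against $\Per_s(E\setminus A_r,B_{r})$ and use that $(Q-1)\Per_s(E,B_r)$ is bounded by $C\mathcal{L}_s(A_r,\cdot)$-type quantities. This should yield $\mathcal{L}_s(A_r,A_r^c)\le C\,\mathcal{L}_s(A_r,E\setminus B_r)$ with $C=C(Q,n,s)$.

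\textbf{Step 2 (splitting the tail dyadically).} Split $\mathcal{L}_s(A_r,E\setminus B_r)$ into three pieces.
\begin{itemize}
\item[(i)] The near part, $\mathcal{L}_s(A_r,B_1\setminus B_r)$, is bounded as in \eqref{eq::ude4} by $C\int_0^r\mu'(\rho)(r-\rho)^{-s}d\rho$.
\item[(ii)] The dyadic annuli: for $x\in B_1$ and $y\in E\cap(B_{2^{k}}\setminus B_{2^{k-1}})$ with $k\ge 2$, we have $|x-y|\ge c2^{k}$. Using $a(2^k)\le\delta_0$, this gives
\begin{equation*}
\sum_{k=1}^{\overline{k}}|A_r|\,|E\cap B_{2^k}|\,2^{-k(n+s)}\le C\delta_0|A_r|\sum_k 2^{-ks}\le C\delta_0\mu(r).
\end{equation*}
\item[(iii)] The remaining region $B_{2^{\overline{k}}}^c$ contributes at most $C\mu(r)2^{-\overline{k}s}\le C\delta_0\mu(r)$.
\end{itemize}

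\textbf{Step 3 (closing as in Lemma~\ref{lemma::keep_small_density}).} Combine with the Sobolev inequality \eqref{eq::ude1} to get
\begin{equation*}
\mu(r)^{1-\frac sn}\le C\int_0^r\frac{\mu'(\rho)}{(r-\rho)^s}\,d\rho+C\delta_0\,\mu(r).
\end{equation*}
Since $\mu(r)\le C\delta_0$, the last term is at most $C\delta_0^{s/n}\mu(r)^{1-\frac sn}$, which can be absorbed into the left-hand side for $\delta_0$ small. Then integrate over $r\in(\frac12,1)$ and apply Fubini exactly as in \eqref{eq::ude6} to obtain
\begin{equation*}
a(1/2)^{1-\frac sn}\le C\,a(1)\le C\delta_0.
\end{equation*}
Hence $a(1/2)\le (C\delta_0^{s/n})^{\frac n{n-s}}\delta_0\le\delta_0$ once $\delta_0$ is small.
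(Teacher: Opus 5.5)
Your proposal has a genuine gap in Step 1, and it is not just bookkeeping. The first inequality in your display is false. $\mathcal{L}_s(A_r,E\setminus B_r)$ is an interaction between two subsets of $E$, so it is not a term of $\Per_s(E,B_r)$. In fact $\Per_s(E,B_r)=\mathcal{L}_s(A_r,E^c)+\mathcal{L}_s(E\setminus B_r,E^c\cap B_r)$ and $\Per_s(E\setminus B_r,B_r)=\mathcal{L}_s(E\setminus B_r,B_r)$. What quasi-minimality with the competitor $E\setminus B_r$ actually gives is
\[
\mathcal{L}_s(A_r,E^c)\le Q\,\mathcal{L}_s(A_r,E\setminus B_r)+(Q-1)\,\mathcal{L}_s(E^c\cap B_r,E\setminus B_r),
\]
and the residual term does not involve $A_r$ at all. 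Your fallback claim, that $(Q-1)\Per_s(E,B_r)$ is controlled by $\mathcal{L}_s(A_r,\cdot)$-type quantities, therefore fails. Take the half-space $E=\{x_n>2^{\overline{k}+1}\}$: it is $s$-minimal and satisfies all the hypotheses with $a(2^k)=0$. It has $A_r=\emptyset$, yet $\Per_s(E,B_r)=\mathcal{L}_s(E,B_r)>0$, of order $2^{-\overline{k}s}$.

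Your Step 2 also signals the problem. If the target $\mathcal{L}_s(A_r,A_r^c)\le C\,\mathcal{L}_s(A_r,E\setminus B_r)$ held, \eqref{eq::ude4} would already bound the right-hand side by $C\int_0^r\mu'(\rho)(r-\rho)^{-s}\,d\rho$. The dyadic splitting would then be pointless, and the hypotheses \eqref{eq::small_density_dyadic} for $k\ge1$ and $2^{-\overline{k}s}\le\delta_0$ would be superfluous. They are in the statement precisely to control the residual.

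The paper instead keeps the cruder consequence $\mathcal{L}_s(A_r,A_r^c)\le 2\mathcal{L}_s(A_r,B_r^c)+Q\,\mathcal{L}_s(E\setminus B_r,B_r)$. It then estimates the last term for the whole ball $B_r$, not for $A_r$. The near part is the outer analogue of \eqref{eq::ude4}: $\mathcal{L}_s(E\cap(B_1\setminus B_r),B_r)\le C\int_r^1\mu'(\rho)(\rho-r)^{-s}\,d\rho$. After integrating in $r\in(\frac12,1)$ and applying Fubini, this is at most $C\mu(1)$. The dyadic annuli give $C2^{-ks}a(2^{k+1})$, and the far region gives $C2^{-\overline{k}s}$.

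These tail bounds carry the factor $|B_r|$ rather than $|A_r|$, so your estimate $C\delta_0\mu(r)$ and the absorption in Step 3 are not available. They are also not needed, since the final step only requires an additive bound:
\[
a(1/2)^{1-\frac sn}\le C\,a(1)+C\sum_{k=1}^{\overline{k}}2^{-ks}a(2^k)+CQ\,2^{-\overline{k}s}\le C(1+Q)\,\delta_0 .
\]
Because $\frac{n}{n-s}>1$, this gives $a(1/2)\le\big(C(1+Q)\delta_0\big)^{\frac{n}{n-s}}\le\delta_0$ for $\delta_0$ small. One further caution: the annulus $B_2\setminus B_1$ is not at distance $\gtrsim 1$ from $B_r$ as $r\to1$, so the dyadic estimate does not apply to it. It is best included in the near term by running the coarea bound up to radius $2$, at the cost of $C\,a(2)$.
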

		
		\begin{proof}
			Let $r\in\left(\frac{1}{2},1\right)$, and define $A_r:=E\cap B_r$. Also, let~$\mu(r):=|A_r|$ and notice that, by the co-area formula, $\mu'(r)=\haus{n-1}(E\cap\partial B_r)$.
			
			Now, notice that, by Sobolev embeddings, we have that  
			\begin{equation} \label{eq::ude1_Q_min}
				\norm{\chi_{A_r}}_{L^{\frac{n}{n-s}}(\R^n)} = |A_r|^{1-\frac{s}{n}}\leq C[\chi_{A_r}]_{H^{s/2}} = C \mathcal{L}_s(A_r,A_r^c),
			\end{equation}
			for some constant~$C>0$ depending only on~$n$ and ~$s$.
			
			Since
			\begin{equation*}
				\mathcal{L}_s(A_r,A_r^c) = \mathcal{L}_s(A_r,E\cap A_r^c)+\mathcal{L}_s(A_r,E^c),
			\end{equation*}
			it follows from the $Q$-quasi minimality of $E$ that
			\begin{equation} \label{eq::ude2_Q_min}
				\begin{split}
					&\mathcal{L}_s(A_r,A_r^c)\leq 2\mathcal{L}_s(A_r,E\cap A_r^c) + Q\Per_s(E\setminus B_r,B_r)\\
					&\qquad\leq 2\mathcal{L}_s(A_r,B_r^c) + Q\mathcal{L}_s(E\cap B_r^c,B_r).
				\end{split}
			\end{equation}
			From~\eqref{eq::ude1_Q_min} and~\eqref{eq::ude2_Q_min} together, we deduce that
			\begin{equation} \label{eq::ude3_Q_min}
				\mu(r)^{1-\frac{s}{n}} \leq C\mathcal{L}_s(A_r,B_r^c) + Q\mathcal{L}_s(E\cap B_r^c,B_r).
			\end{equation}
			and we aim to estimate the two terms on the right-hand side.
			
			To do so, observe that, by the co-area formula, we have that
			\begin{equation} \label{eq::ude4_Q_min}
				\begin{split}
					&	\mathcal{L}_s(A_r,B_r^c)
					= \iint_{A_r\times B_r^c} \frac{dx\,dy}{|x-y|^{n+s}}
					\leq C\int_{A_r}\left(\int_{r-|x|}^{+\infty}\frac{dz}{z^{s+1}}\right)\,dx \\
					&\qquad\quad\leq C\int_{A_r} \frac{dx}{(r-|x|)^{s}}
					\leq C\int_0^r\frac{\mu'(\rho)}{(r-\rho)^{s}}\, d\rho ,
				\end{split}
			\end{equation}
			for some~$C>0$, depending on~$n$, and~$s$ and possibly changing from line to line.

			Then, we provide an estimate for
			\begin{equation*}
				\begin{split}
					&\mathcal{L}_s(E\cap B_r^c,B_r)\\
					&\quad = \iint_{B_r\times E\cap (B_1\setminus B_r)} \frac{dxdy}{|x-y|^{n+s}} +\sum_{k=0}^N \iint_{B_r\times E\cap C_k} \frac{dxdy}{|x-y|^{n+s}} + \iint_{B_r\times E\cap B_{2^N}^c}\frac{dxdy}{|x-y|^{n+s}}
				\end{split}
			\end{equation*}
			where $C_k:=B_{2^{k+1}}\setminus B_{2^k}$.
			
			First, using co-area formula, we deduce that
			\begin{equation} \label{eq::ude5_1_Q_min}
				\begin{split}
					&\iint_{B_r\times E\cap (B_1\setminus B_r)} \frac{dxdy}{|x-y|^{n+s}} \leq  \int_{E\cap (B_1\setminus B_r)}\int_{B_{|x|}(x)\setminus B_{|x|-r}(x)} \frac{dxdy}{|x-y|^{n+s}} \\
					&\qquad\leq C  \int_{E\cap (B_1\setminus B_r)} \int_{|x|-r}^{|x|}\frac{dxdz}{|z|^{n+s}}\leq C  \int_{E\cap (B_1\setminus B_r)}\frac{dx}{(|x|-r)^{s}}\\
					&\qquad = C \int_{r}^1 \frac{\mu'(\rho)}{(\rho-r)^s}d\rho .
				\end{split}
			\end{equation}
			
			Moreover, for every $k\in\{0,\dots,\overline{k}-1\}$, we have that
			\begin{equation} \label{eq::ude5_2_Q_min}
				\begin{split}
					& \iint_{B_r\times E\cap C_k} \frac{dxdy}{|x-y|^{n+s}} \leq \int_{B_r \times E\cap C_k} \frac{dxdy}{(2^k-r)^{n+s}} \\
					&\qquad\leq C 2^{-k(n+s)} |E\cap C_k| \leq C2^{-ks} \frac{|E\cap B_{2^{k+1}}|}{2^{n(k+1)}}\leq C2^{-ks} a(2^{k+1}), 
				\end{split}
			\end{equation}
			for some constant $C>0$ depending only on $n$ and $s$, and possibly changing from line to line.
			
			As last, since $|x-y|\geq \frac{|x|}{2}$, whenever $x\in E\cap B_{2^{\overline{k}}}^c$ and $y\in B_r$, we infer that
			\begin{equation} \label{eq::ude5_3_Q_min}
				\begin{split}
					& \iint_{B_r\times E\cap B_{2^{\overline{k}}}^c} \frac{dxdy}{|x-y|^{n+s}} \leq \int_{B_r \times E\cap B_{2^{\overline{k}}}^c} \frac{dxdy}{(|x|-r)^{n+s}} \\
					&\qquad\leq C  \int_{B_{2^{\overline{k}}}^c} \frac{dx}{|x|^{n+s}} \leq C2^{-{\overline{k}}s}. 
				\end{split}
			\end{equation}
			up to relabeling $C$.
			
			Now, plugging \eqref{eq::ude4_Q_min},~\eqref{eq::ude5_1_Q_min},~\eqref{eq::ude5_2_Q_min}, and~\eqref{eq::ude5_3_Q_min} into \eqref{eq::ude3_Q_min}, we find
			\begin{equation*}  
				\mu(r)^{1-\frac{s}{n}} \leq \int_0^r\frac{\mu'(\rho)}{(r-\rho)^{s}}\, d\rho + \int_r^1\frac{\mu'(\rho)}{(\rho-r)^{s}}\, d\rho + \sum_{k=1}^{\overline{k}}2^{-ks}a(2^k) + CQ2^{-\overline{k}s}.
			\end{equation*}
			Now, we integrate la last inequality over $r\in\left(\frac{1}{2},1\right)$,to obtain
			\begin{equation} \label{eq::ude6_Q_min}
				\begin{split}
					&a(1/2)^{1-\frac{s}{n}} \leq C\mu(1/2)^{1-\frac{s}{n}} \leq C\int_{1/2}^{1} \mu(r)^{1-\frac{s}{n}}\, dr\\
					\leq &C\int_{1/2}^{1} \left( \int_0^r\frac{\mu'(\rho)}{(r-\rho)^{s}}\, d\rho + \int_r^1\frac{\mu'(\rho)}{(\rho-r)^{s}}\, d\rho + \sum_{k=1}^{\overline{k}}2^{-ks}a(2^k) + Q2^{-\overline{k}s} \right) dr
				\end{split} 
			\end{equation}
			
			The first term on the right-hand side gives
			\begin{equation} \label{eq::ude6_1_Q_min}
				\begin{split}
					&\int_{1/2}^{1} \int_0^r\frac{\mu'(\rho)}{(r-\rho)^{s}}\, d\rho\, dr = \int_{0}^{1/2}\int_{1/2}^1\frac{\mu'(\rho)}{(r-\rho)^{s}}\, dr\,d\rho + \int_{1/2}^{1}\int_\rho^1\frac{\mu'(\rho)}{(r-\rho)^{s}}\, dr\,d\rho \\ 
					&\qquad \leq C\int_{0}^{1/2}\mu'(\rho)(1-\rho)^{1-s}\, d\rho + C\int_{1/2}^{1}\mu'(\rho)(1-\rho)^{1-s}\, d\rho\\
					&\qquad \leq C\mu(1) \leq Ca(1)
				\end{split}
			\end{equation}
			up to renaming $C>0$. 
			
			Integrating the second term on the right-hand side of~\eqref{eq::ude6_Q_min}, we obtain
			\begin{equation} \label{eq::ude6_2_Q_min}
				\begin{split}
					&\int_{1/2}^{1}\int_r^1\frac{\mu'(\rho)}{(\rho-r)^{s}}\, d\rho \,dr = \int_{1/2}^{1}\int_{1/2}^{\rho} \frac{\mu'(\rho)}{(\rho-r)^{s}}\, dr\,d\rho \\
					&\qquad\leq C\int_{1/2}^{1} \mu'(\rho)\left(\rho-\frac{1}{2}\right)^{1-s}\, d\rho \leq C\mu(1) \leq Ca(1).
				\end{split}
			\end{equation}
			for some $C>0$ depending only on  $n$ and $s$.
			
			Moreover, integrating the last two term on the right-hand side of~\eqref{eq::ude6_Q_min}, we infer that
			\begin{equation} \label{eq::ude6_3_Q_min}
				\int_{1/2}^{1} \left( \sum_{k=1}^{\overline{k}}2^{-ks}a(2^k) + Q2^{-\overline{k}s} \right) dr \leq \frac{1}{2}\sum_{k=1}^{\overline{k}}2^{-ks}a(2^k) + \frac{1}{2}Q2^{-\overline{k}s}.
			\end{equation}

			From~\eqref{eq::ude6_1_Q_min},~\eqref{eq::ude6_2_Q_min}, and~\eqref{eq::ude6_3_Q_min} together with~\eqref{eq::ude6_Q_min}, we deduce that
			\begin{equation} \label{eq::ude7_Q_min}
				\begin{split}
					&a(1/2)^{1-\frac{s}{n}} \leq Ca(1) + C \sum_{k=1}^{\overline{k}}2^{-ks}a(2^k) + CQ2^{-\overline{k}s} \\
					&\qquad \leq C \sum_{k=0}^{\overline{k}}2^{-ks}a(2^k) + CQ2^{-\overline{k}s},
				\end{split} 
			\end{equation}
			up to renaming $C$.
			
			Using~\eqref{eq::small_density_dyadic} and that $2^{-\overline{k}s} \leq \delta_0$ in~\eqref{eq::ude7_Q_min}, we conclude that
			\begin{equation*} 
				\begin{split}
					&a(1/2) \leq \left(C\sum_{k=0}^{\overline{k}}2^{-ks}a(2^{k}) + CQ2^{-\overline{k}s}\right)^{\frac{n}{n-s}} \leq \left(C\overline{k}\delta_0+CQ\delta_0\right)^{\frac{n}{n-s}}\\
					&\qquad\leq \left[C(\overline{k}+Q)\delta_0^{s/n}\right]^{\frac{n}{n-s}}\delta_0 \leq \delta_0,
				\end{split}
			\end{equation*}
			whenever $\delta_0$ is small enough, showing the result.
		\end{proof}
		
		As an immediate consequence of Lemma~\ref{lemma::keep_small_density_Q_min}, we obtain the following
		\begin{cor} \label{cor::keep_small_density_Q_min}
			Let $\delta_0>0$.
			Let~$E$ be a~$Q$-quasi minimal set in $B_{2^{\overline{k}}}$, with $\overline{k}\in\N$ such that $2^{-\overline{k}s}\leq \delta_0$.
			
			If $a(2^{-\overline{k}})\leq 2^{-\overline{k}n}\delta_0$, then $a(1/2)\leq \delta_0$. 
		\end{cor}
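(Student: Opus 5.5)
The plan is to derive Corollary~\ref{cor::keep_small_density_Q_min} directly from Lemma~\ref{lemma::keep_small_density_Q_min}. It suffices to check that the single assumption $a(2^{-\overline{k}})\leq 2^{-\overline{k}n}\delta_0$, read at the appropriate scale, implies the dyadic smallness hypothesis~\eqref{eq::small_density_dyadic} at every scale $2^k$, $k\in\{0,\dots,\overline{k}\}$. The key elementary fact is monotonicity of the \emph{measure} $|E\cap B_\rho|$ in $\rho$. For $\rho\leq R$ this gives
\begin{equation*}
a(\rho)=\frac{|E\cap B_\rho|}{|B_\rho|}\leq \frac{|E\cap B_R|}{|B_\rho|}=\left(\frac{R}{\rho}\right)^n a(R),
\end{equation*}
so the density at a small radius is controlled by the density at the largest radius, with a loss factor $(R/\rho)^n$.

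The hypothesis $a(2^{-\overline{k}})\leq 2^{-\overline{k}n}\delta_0$ is stated at a small scale, whereas the lemma is stated at the large scales $1,\dots,2^{\overline{k}}$. I would interpret it after the natural rescaling, in the same spirit as the rescaling~\eqref{eq::still}. Quasi minimality with $Q$ is scale invariant, since $\Per_s(\lambda E,\lambda\Omega)=\lambda^{n-s}\Per_s(E,\Omega)$ and both sides of~\eqref{eq::almgren_def} scale identically. Dilating by $2^{\overline{k}}$, the condition becomes smallness of the density on the largest ball $B_{2^{\overline{k}}}$ of the domain of quasi minimality, namely $a(2^{\overline{k}})\leq 2^{-\overline{k}n}\delta_0$. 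The monotonicity estimate with $R=2^{\overline{k}}$ and $\rho=2^k$ then yields
\begin{equation*}
a(2^k)\leq 2^{(\overline{k}-k)n}\,2^{-\overline{k}n}\delta_0=2^{-kn}\delta_0\leq\delta_0,\qquad k\in\{0,\dots,\overline{k}\}.
\end{equation*}
This is exactly~\eqref{eq::small_density_dyadic}. Since $2^{-\overline{k}s}\leq\delta_0$ is assumed and $E$ is $Q$-quasi minimal in $B_{2^{\overline{k}}}$, Lemma~\ref{lemma::keep_small_density_Q_min} applies and gives $a(1/2)\leq\delta_0$.

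The only delicate point is bookkeeping: the normalization of scales in the statement, that is, which ball carries the hypothesis. I would first check that the factor $2^{-\overline{k}n}$ is precisely what compensates the volume ratio $|B_{2^{\overline{k}}}|/|B_1|$ at the worst scale $k=0$. If the statement is instead meant literally at radius $2^{-\overline{k}}$ without rescaling, I would replace the argument by the same monotonicity inequality applied between the relevant radii, after translating the configuration by the scaling $x\mapsto 2^{\overline{k}}x$. I expect no analytic obstacle; all the analysis is contained in Lemma~\ref{lemma::keep_small_density_Q_min}.
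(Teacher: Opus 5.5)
Your displayed computation is exactly the argument behind the paper's one-line ``immediate consequence'': monotonicity of $\rho\mapsto|E\cap B_\rho|$ gives $a(2^k)\leq 2^{(\overline{k}-k)n}a(2^{\overline{k}})\leq 2^{-kn}\delta_0\leq\delta_0$ for $k=0,\dots,\overline{k}$, and then Lemma~\ref{lemma::keep_small_density_Q_min} applies. The gap is the step that is supposed to move the hypothesis $a(2^{-\overline{k}})\leq 2^{-\overline{k}n}\delta_0$ to radius $2^{\overline{k}}$. Rescaling cannot do this. If $F=\lambda E$, then $a_F(\lambda\rho)=a_E(\rho)$ for every $\rho$. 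So a dilation moves the hypothesis radius, the conclusion radius $1/2$, and the domain $B_{2^{\overline{k}}}$ of quasi minimality all together. The ratio $2^{-\overline{k}}/(1/2)$ between the hypothesis and conclusion radii is therefore scale invariant. In addition, dilating by $2^{\overline{k}}$ sends radius $2^{-\overline{k}}$ to radius $1$, not $2^{\overline{k}}$. Your fallback of applying monotonicity ``between the relevant radii'' also fails. Monotonicity only bounds the density at a smaller radius by the density at a larger one, never the reverse, and here the hypothesis sits at the smaller radius.

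In fact the statement, read literally, is false. The half-space $E=\{x_n>2^{-\overline{k}}\}$ is $s$-minimal, hence $Q$-quasi minimal in $B_{2^{\overline{k}}}$. It satisfies $E\cap B_{2^{-\overline{k}}}=\emptyset$, so $a(2^{-\overline{k}})=0$. On the other hand, $a(1/2)\geq\frac12-2^{1-\overline{k}}\omega_{n-1}/\omega_n$. You may take $\overline{k}$ as large as you like, since the statement holds for every $\overline{k}$ with $2^{-\overline{k}s}\leq\delta_0$. Then $a(1/2)>\delta_0$ as soon as, say, $\delta_0<1/4$.

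The exponent in the statement is a typo. The hypothesis should read $a(2^{\overline{k}})\leq 2^{-\overline{k}n}\delta_0$. This is how the corollary is actually used in the proof of Lemma~\ref{lemma::empty_ball_Q_min}, where the smallness is assumed at the large radius $2^{\overline{k}-1}$. It is also what your own remark points to, that the factor $2^{-\overline{k}n}$ compensates $|B_{2^{\overline{k}}}|/|B_1|$. So drop the rescaling paragraph and the fallback. State explicitly that you prove the corrected statement, citing the counterexample to the literal one. With that change your argument is complete.
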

		
		The other partial result needed for the proof of Proposition~\ref{prop::UDE_Q_min} is a clean-out lemma.
		
		\begin{lemma} \label{lemma::empty_ball_Q_min}
			Let $\delta_0$ and $\overline{k}$ be as in Corollary~\ref{cor::keep_small_density_Q_min}. Let~$E$ be a~$Q$-quasi minimal set in $B_{2^{\overline{k}}}$ such that
			\begin{equation} \label{eq::small_density_Q_min}
				a(2^{\overline{k}})<\frac{\delta_0}{\omega_n2^{n(\overline{k}+1)}}.
			\end{equation}
			Then, $|E\cap B_{1/2}|=0$.
		\end{lemma}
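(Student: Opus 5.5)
The proof mirrors that of Lemma~\ref{lemma::empty_ball}, with Corollary~\ref{cor::keep_small_density_Q_min} playing the role of Lemma~\ref{lemma::keep_small_density}. We argue by contradiction. Assume $|E\cap B_{1/2}|>0$ and pick a Lebesgue point $x_0\in E\cap B_{1/2}$ of $E$ (density one), so that
\begin{equation*}
\lim_{\rho\to0}\frac{|E\cap B_\rho(x_0)|}{|B_\rho|}=1 .
\end{equation*}
The aim is to show that the density around $x_0$ stays below $\delta_0$ at every dyadic scale. That contradicts density one as soon as $\delta_0<1$.

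First we transfer the smallness hypothesis~\eqref{eq::small_density_Q_min} to balls centred at $x_0$. Since $|x_0|<1/2$, every ball $B_\rho(x_0)$ with $\rho\leq 2^{\overline{k}}-1$ lies in $B_{2^{\overline{k}}}$. Hence
\begin{equation*}
\frac{|E\cap B_\rho(x_0)|}{|B_\rho|}\leq \frac{|E\cap B_{2^{\overline{k}}}|}{\omega_n\rho^n}= \frac{2^{n\overline{k}}}{\rho^n}\,a(2^{\overline{k}})<\frac{\delta_0}{\omega_n 2^{n}\rho^n},
\end{equation*}
using $|B_{2^{\overline{k}}}|=\omega_n2^{n\overline{k}}$. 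Taking $\rho=2^{-\overline{k}}$ of the appropriately rescaled set, this gives exactly the hypothesis $a(2^{-\overline{k}})\leq 2^{-\overline{k}n}\delta_0$ needed in Corollary~\ref{cor::keep_small_density_Q_min}, after translating $x_0$ to the origin. Some care with constants is needed here: the factor $\omega_n 2^{n(\overline{k}+1)}$ in~\eqref{eq::small_density_Q_min} is designed precisely to absorb the loss from recentring and the factor $2^{-\overline{k}n}$. The corollary then yields $a(1/2)\leq\delta_0$ around $x_0$.

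Next we iterate. $Q$-quasi minimality is scale invariant, since $Q$ does not change under dilations and the condition $2^{-\overline{k}s}\leq\delta_0$ involves no scale. So the set $2^{m}(E-x_0)$ is again $Q$-quasi minimal in $B_{2^{\overline{k}}}$ for every $m\geq0$, provided the original ball of quasi-minimality contains the needed balls. We therefore apply Lemma~\ref{lemma::keep_small_density_Q_min} directly, rather than the corollary, at each step. By induction on $m$, the dyadic densities $a(2^{k-m})$, $k=0,\dots,\overline{k}$, around $x_0$ are all at most $\delta_0$. For $m=0$ these inequalities follow from the bound above, since $2^{n\overline{k}}\rho^{-n}a(2^{\overline k})\le\delta_0$ for $\rho\ge 1$ after absorbing constants. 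The inductive step applies Lemma~\ref{lemma::keep_small_density_Q_min} to the rescaled set; it gives the new scale $a(2^{-m-1})\leq\delta_0$, while the other $\overline{k}$ scales are inherited from the previous steps. Consequently $a(2^{-m})\leq\delta_0$ for all $m$. Letting $m\to\infty$ gives $1\leq\delta_0$, a contradiction.

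The main obstacle is bookkeeping rather than substance. We must ensure that the window of $\overline{k}+1$ consecutive dyadic densities required by Lemma~\ref{lemma::keep_small_density_Q_min} is available at the first step, when only information at the top scale $2^{\overline{k}}$ is given. This is handled by the crude estimate displayed above, which controls all intermediate scales between $1$ and $2^{\overline{k}}$ by the single top-scale quantity at the price of the factor $2^{n\overline{k}}$. It is also what forces the smallness threshold in~\eqref{eq::small_density_Q_min} to depend on $\overline{k}$.
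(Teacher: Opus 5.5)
Your strategy is the paper's: argue by contradiction at a density-one point $x_0\in E\cap B_{1/2}$, transfer the smallness hypothesis to balls centred at $x_0$, and iterate Lemma~\ref{lemma::keep_small_density_Q_min} on the dilations $2^m(E-x_0)$ down the dyadic scales. The inductive step is fine, but the base case $m=0$ fails as written.

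Write $a_{x_0}(\rho):=|E\cap B_\rho(x_0)|/|B_\rho|$. The base case needs $a_{x_0}(2^{k})\le\delta_0$ for all $k=0,\dots,\overline{k}$, including the top scale $2^{\overline{k}}$. Your recentring bound only covers balls $B_\rho(x_0)\subseteq B_{2^{\overline{k}}}$, which is the restriction you state yourself. The ball $B_{2^{\overline{k}}}(x_0)$ sticks out of $B_{2^{\overline{k}}}$, and there the hypothesis says nothing about $E$. For the same reason, $E-x_0$ is not known to be $Q$-quasi minimal in all of $B_{2^{\overline{k}}}$, so the lemma cannot be quoted at $m=0$. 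Your closing paragraph says the crude bound controls ``all intermediate scales between $1$ and $2^{\overline{k}}$'', but the top scale is exactly the one it misses.

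The fix is to start one dyadic scale lower, as the paper does. Since $|x_0|<1/2$, the balls $B_{2^{k-1}}(x_0)$, $k=0,\dots,\overline{k}$, lie in $B_{2^{\overline{k}}}$, so your bound controls these densities. Moreover, $2(E-x_0)$ is $Q$-quasi minimal in $B_{2^{\overline{k}}}$. The first application of the lemma then gives $a_{x_0}(1/4)\le\delta_0$, and your induction runs unchanged for $m\ge 1$. At scale $1/2$ your bound yields only $\delta_0/\omega_n$, so a harmless constant must be absorbed into the choice of $\delta_0$; the paper's proof has the same looseness.

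The paragraph invoking Corollary~\ref{cor::keep_small_density_Q_min} should be deleted. At $\rho=2^{-\overline{k}}$ your displayed bound gives $2^{n(\overline{k}-1)}\delta_0/\omega_n$, nowhere near $2^{-n\overline{k}}\delta_0$. The printed hypothesis $a(2^{-\overline{k}})\le 2^{-n\overline{k}}\delta_0$ is a misprint for $a(2^{\overline{k}})\le 2^{-n\overline{k}}\delta_0$. That corrected form is how the paper uses it: after a dilation by $2$, applied to $a_{x_0}(2^{\overline{k}-1})$. For the reason above, the first step yields $a_{x_0}(1/4)$, not $a_{x_0}(1/2)$. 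Since you abandon the corollary in favour of the lemma anyway, nothing else in your argument depends on that paragraph.
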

		
		\begin{proof}
			Arguing by contradiction, suppose that $|E\cap B_{1/2}|>0$. Therefore, there exists a Lebesgue point $x_0\in E\cap B_1$ such that 
			\begin{equation*}
				\lim_{\rho\to0} a(\rho) = 1.
			\end{equation*}
			
			On the other hand, by~\eqref{eq::small_density_Q_min}, we have that also
			\begin{equation} \label{eq::recentered_ball_density_Q_min}
				\frac{|E\cap B_{2^{\overline{k}-1}}(x_0)|}{|B_{2^{\overline{k}-1}}|} \leq \omega_n2^n|E\cap B_{2^{\overline{k}}}| \leq 2^{-n\overline{k}}\delta_0.
			\end{equation}
			
			Without loss of generality, we assume that $x_0=0$, so that~\eqref{eq::recentered_ball_density_Q_min} reads $a(2^{\overline{k}-1})\leq 2^{-n\overline{k}}\delta_0$. Now, thanks to Corollary~\ref{cor::keep_small_density_Q_min}, we also have $a(1/4)\leq \delta_0$.
			Applying inductively Corollary~\ref{cor::keep_small_density_Q_min}, we infer that, for every $k\in\N$,
			\begin{equation*}
				a(2^{-k-2}) \leq \left(C\sum_{j=0}^{\overline{k}}a(2^{-k+j})+CQ2^{-\overline{k}s}\right)^{\frac{n}{n-s}} \leq \left[C(1+Q)\delta_0^{s/n}\right]^{\frac{n}{n-s}}\delta_0 \leq \delta_0.
			\end{equation*}
			
			Therefore,
			\begin{equation*}
				1 = \lim_{k\to+\infty} a(2^{-k-2}) \leq \delta_0,
			\end{equation*}
			which is a contradiction and concludes the proof.
		\end{proof}
		
		\begin{proof}[Proof of Proposition~\ref{prop::UDE_Q_min}]
			Suppose that
			\begin{equation} \label{eq::some_mass_Q_min}
				|E \cap B_\rho|>0,
			\end{equation}
			and let~ $x_0\in( \partial E)\cap B_\rho$, for some small $\rho>0$. Up to a dilations and translation, we suppose that~$x_0$ coincides with the origin. 
			
			Let us recall that, by definition, if~$E$ is a~$Q$-quasi minimal set in $B_1$, for some~$Q\geq1$, then
			\begin{equation*} \label{eq::almost_min2_Q_min}
				\Per_s(E, B_\rho) \leq Q\Per_s(F,B_\rho),
			\end{equation*}
			for every~$F$ such that~$ F\Delta E \subseteq B_\rho \Subset B_1$.
			Observe that quasi minimality is invariant under scalings, and therefore, we have that
			\begin{equation} \label{eq::still_Q_min}
				\Per_s(\rho^{-1}E,B_1) \leq Q\Per_s(\rho^{-1}F,B_1),       
			\end{equation}
			Thus, without loss of generality, we assume $\rho=1$ and that $E$ is $Q$-quasi minimal in $B_{2^{\overline{k}}}$, where $\overline{k}$ is as in Corollary~\ref{cor::keep_small_density_Q_min}. Moreover,~\eqref{eq::some_mass_Q_min} reads
			\begin{equation} \label{eq::some_mass_Q_min2}
				|E \cap B_1|>0.
			\end{equation}
			\smallskip
			We claim that 
			\begin{equation} \label{eq::ude_claim_Q_min}
				|E\cap B_1|\geq \delta_0,
			\end{equation}
			where $\delta_0$ is as in Corollary~\ref{cor::keep_small_density_Q_min}.
			
			Arguing by contradiction, suppose that
			\begin{equation*} \label{eq::ude_small_Q_min}
				|E\cap B_1|\leq \delta_0.
			\end{equation*}
			
			Thus, by mean of Lemma~\ref{lemma::empty_ball_Q_min}, we infer $|E\cap B_1|=0$. However, this contradicts~\eqref{eq::some_mass_Q_min2}, proving~\eqref{eq::ude_claim_Q_min}. Rescaling back by $\rho$, we obtain the first part of~\eqref{eq::unif_dens_estimates_Q_min}.
			\smallskip
			
			In addition, if~$E$ is a quasi minimal set, then also~$E^c$ is quasi minimal . 
			Thus, the argument above with $E^c$ in place of $E$ leads to the second part of~\eqref{eq::unif_dens_estimates_Q_min}, concluding the proof.
		\end{proof}
		
		\subsection*{Optimality of Lipschitz-regularity of $Q$-minimizers} \label{sec::min_graph}
		
		Now, we exhibit a counterexample to H\"older regularity of $Q$-quasi minimal sets. In particular, given $\epsilon>0$, we construct a Lipschitz set $E_\epsilon$ that is $E_\epsilon$ is $Q_\epsilon$-minimal in $B_1$, with $Q_\epsilon\to1$ as $\epsilon\to0$. For this purpose, our key tool will be the following:
		\begin{lemma} \label{lemma::per_preserving}
			Let $\Omega\subseteq\R^n$ an open bounded domain. Let $\epsilon\in(0,1)$ and let $\Phi_\epsilon:\R^n\to\R^n$ be a bi-Lipschitz transformation such that
			\begin{equation} \label{eq::det_control}
				\frac{1}{1+\epsilon} \leq |\det J\Phi_\epsilon| \leq 1+\epsilon. 
			\end{equation}
			
			Then, for any set $F\subseteq \R^n$, we have that
			\begin{equation*}
				\Per_s(F,\Omega) \leq (1+K\epsilon)\Per_s(\Phi_\epsilon(F),\Phi_\epsilon(\Omega)),
			\end{equation*}
			for some $K>0$ depending only on $n$ and $s$.
		\end{lemma}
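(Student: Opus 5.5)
The plan is a direct change of variables in the double integral defining $\Per_s$. Write
\begin{equation*}
\Per_s(F,\Omega)=\iint_{\Omega_\sharp}\frac{\chi_F(x)\chi_{F^c}(y)}{|x-y|^{n+s}}\,dx\,dy .
\end{equation*}
Since $\Phi_\epsilon$ is a bijection, $\Phi_\epsilon(F^c)=\Phi_\epsilon(F)^c$ and $\Phi_\epsilon(\Omega^c)=\Phi_\epsilon(\Omega)^c$. Hence the map $(x,y)\mapsto(\Phi_\epsilon(x),\Phi_\epsilon(y))$ sends $\Omega_\sharp$ onto $\Phi_\epsilon(\Omega)_\sharp$ and sends $\chi_F(x)\chi_{F^c}(y)$ to $\chi_{\Phi_\epsilon(F)}(X)\chi_{\Phi_\epsilon(F)^c}(Y)$. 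Bi-Lipschitz maps are differentiable a.e. and satisfy the area formula, so substituting $X=\Phi_\epsilon(x)$, $Y=\Phi_\epsilon(y)$ gives
\begin{equation*}
\Per_s(F,\Omega)=\iint_{\Phi_\epsilon(\Omega)_\sharp}\frac{\chi_{\Phi_\epsilon(F)}(X)\chi_{\Phi_\epsilon(F)^c}(Y)}{|\Phi_\epsilon^{-1}(X)-\Phi_\epsilon^{-1}(Y)|^{n+s}}\,|\det J\Phi_\epsilon^{-1}(X)|\,|\det J\Phi_\epsilon^{-1}(Y)|\,dX\,dY .
\end{equation*}
By \eqref{eq::det_control}, each Jacobian factor is at most $1+\epsilon$, so together they contribute at most $(1+\epsilon)^2\le 1+3\epsilon$.

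The remaining task is to compare the kernels. I need
\begin{equation*}
|\Phi_\epsilon^{-1}(X)-\Phi_\epsilon^{-1}(Y)|\ge (1+\epsilon)^{-1}|X-Y|,
\end{equation*}
that is, $\Phi_\epsilon$ must be $(1+\epsilon)$-Lipschitz. Given this, the kernel ratio is bounded by $(1+\epsilon)^{n+s}\le 1+C(n,s)\epsilon$ for $\epsilon\in(0,1)$. Multiplying the two factors yields $\Per_s(F,\Omega)\le(1+K\epsilon)\Per_s(\Phi_\epsilon(F),\Phi_\epsilon(\Omega))$ with $K=K(n,s)$.

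\textbf{Main obstacle.} The determinant bound \eqref{eq::det_control} alone does not control the metric distortion. A volume-preserving shear has determinant $1$ but can stretch distances arbitrarily, and then the kernel comparison fails. So I would read the hypothesis as including (or strengthen it to include) the bi-Lipschitz constants of $\Phi_\epsilon$ and $\Phi_\epsilon^{-1}$ being at most $1+\epsilon$. In the intended application, where $\Phi_\epsilon$ is a small perturbation of the identity with $\|D\Phi_\epsilon-I\|_\infty\lesssim\epsilon$, both this Lipschitz bound and \eqref{eq::det_control} hold automatically, with $K$ adjusted by a dimensional constant. If one wants to use only \eqref{eq::det_control} literally, I see no way to prove the statement, so I would make this extra assumption explicit in the proof.
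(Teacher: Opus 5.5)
With the $(1+\epsilon)$-Lipschitz hypothesis you add, your proof is correct and is essentially the paper's argument. You change variables $(x,y)\mapsto(\Phi_\epsilon(x),\Phi_\epsilon(y))$, bound the two Jacobian factors by $(1+\epsilon)^2$ via \eqref{eq::det_control}, and compare kernels via $|x-y|^{-n-s}\le(1+\epsilon)^{n+s}|\Phi_\epsilon(x)-\Phi_\epsilon(y)|^{-n-s}$; the paper splits $\Per_s$ into its three $\mathcal{L}_s$ terms rather than integrating over $\Omega_\sharp$, which is cosmetic.

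Your caveat is well founded. The paper's first inequality uses exactly this Lipschitz bound while attributing it to the determinant condition, which cannot supply it: for instance $\Psi_\lambda(x_1,x_2):=(\lambda x_1,x_2/\lambda)$ in $\R^2$, with $F=\{x_1<0\}$ and $\Omega=B_1$, gives $\Per_s(\Psi_\lambda(F),\Psi_\lambda(B_1))\lesssim\lambda^{-s}\to0$. The paper does in fact verify the Lipschitz bound separately for the map in its Example.
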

		\begin{proof}
			Recall that 
			\begin{equation*}
				\Per_s(F,\Omega) = \mathcal{L}_s(F\cap\Omega,F^c\cap\Omega)+\mathcal{L}_s(F\cap\Omega^c,F^c\cap\Omega)+\mathcal{L}_s(F\cap\Omega,F^c\cap\Omega^c).
			\end{equation*}
			Observe that the result will follow if we show that, for any sets $A$ and $B$, we have
			\begin{equation} \label{eq::transformed_interaction}
				\mathcal{L}_s(F\cap A,F^c\cap B) \leq (1+K\epsilon) \mathcal{L}_s(\Phi_\epsilon(F)\cap \Phi_\epsilon(A),\Phi_\epsilon(F^c)\cap \Phi_\epsilon(B)),
			\end{equation}
			for some $K>0$ depending only on $n$ and $s$.
			
			To prove this, To prove this, we use~\eqref{eq::det_control} and the change of variables $(w,z):=(\Phi_\epsilon(x),\Phi_\epsilon(y))$, obtaining
			\begin{equation*}
				\begin{split}
					&\mathcal{L}_s(F\cap A,F^c\cap B) = \int_{F\cap A} \int_{F^c\cap B} \frac{dx\, dy}{|x-y|^{n+s}} \leq (1+\epsilon)^{n+s} \int_{F\cap A} \int_{F^c\cap B} \frac{dx\, dy}{|\Phi_\epsilon(x)-\Phi_\epsilon(y)|^{n+s}}\\
					&\qquad\leq (1+\epsilon)^{n+s+2} \int_{\Phi_\epsilon(F\cap A)} \int_{\Phi_\epsilon(F^c\cap B)} \frac{dw\, dz}{|w-z|^{n+s}} \leq (1+K\epsilon) \int_{\Phi_\epsilon(F\cap A)} \int_{\Phi_\epsilon(F^c\cap B)} \frac{dw\, dz}{|w-z|^{n+s}}.
				\end{split}
			\end{equation*}
			for some $K>0$ depending only on $n$ and $s$.
			
			Moreover, notice that, for any sets $C$ and $D$, it holds
			\begin{equation*}
				\Phi_\epsilon(C\cap D) = \Phi_\epsilon(C) \cap \Phi_\epsilon(D).
			\end{equation*}
			
			Therefore, we infer that
			\begin{equation*}
				\begin{split}
					&\mathcal{L}_s(F\cap A,F^c\cap B)  \leq (1+K\epsilon) \int_{\Phi_\epsilon(F)\cap \Phi_\epsilon(A)} \int_{\Phi_\epsilon(F^c)\cap \Phi_\epsilon(B)} \frac{dw\, dz}{|w-z|^{n+s}} \\
					&\qquad \leq (1+K\epsilon) \mathcal{L}_s(\Phi_\epsilon(F)\cap \Phi_\epsilon(A),\Phi_\epsilon(F^c)\cap \Phi_\epsilon(B)),
				\end{split}
			\end{equation*}		
			establishing~\eqref{eq::transformed_interaction} and concluding the proof.
		\end{proof}
		
		As a corollary, we infer that $Q$-quasi minimizers are generally not better than Lipschitz.
		
		\begin{example} \label{eq::lip_quasi_min}
			Let $\epsilon>0$ chosen conveniently small, and let $E_\epsilon\subseteq\R^2$ be the subgraph of the function $\phi_\epsilon:\R\to\R$ defined as
			\begin{equation*}
				\phi_\epsilon(x) := |x| \tan(\epsilon).
			\end{equation*}
			
			Moreover, we define the bi-Lipschitz transformation $\Phi_\epsilon:\R^n\to\R^n$ as 
			\begin{equation*}
				\Phi_\epsilon(x_1,x_2) := 
				\begin{pmatrix}
					x_1 \\ x_2-\phi_\epsilon(x_1)
				\end{pmatrix}
			\end{equation*}
			Observe that $\Phi_\epsilon(E_\epsilon) = H :=\{x_2<0\}$, $|\det J\Phi_\epsilon| = 1$, and
			\begin{equation*} 
				|\Phi_\epsilon(x_1,x_2)+\Phi_\epsilon(y_1,y_2)| \leq (1+\epsilon)|(x_1,x_2)-(y_1,y_2)|,
			\end{equation*}
			provided that $\epsilon$ is small enough.
			
			Now, let $F_\epsilon\subseteq\R^2$ be any set such that $E_\epsilon\Delta F_\epsilon\subseteq \widetilde{B}_r$, for some ball $\widetilde{B}_r \Subset B_1$. 
			
			Then, employing Lemma~\ref{lemma::per_preserving} and using the minimality of the half-space $H$, we infer that
			\begin{equation*}
				\begin{split}
					&\Per_s(E_\epsilon,B_r) \leq (1+K\epsilon)\Per_s(H,\Phi_\epsilon(B_r)) \\
					&\qquad\leq (1+K\epsilon)\Per_s(\Phi_\epsilon(F_\epsilon),\Phi_\epsilon(B_r)) \leq (1+K\epsilon)\Per_s(F_\epsilon,B_r),
				\end{split}
			\end{equation*}
			for some constant $K>0$ depending only on $s$, and possibly changing from line to line.
			Namely, we have that $E_\epsilon$ is $(1+K\epsilon)$-quasi minimal in $B_1$.
			
			\begin{figure} 
				\centering
				\includegraphics[width=0.8\linewidth]{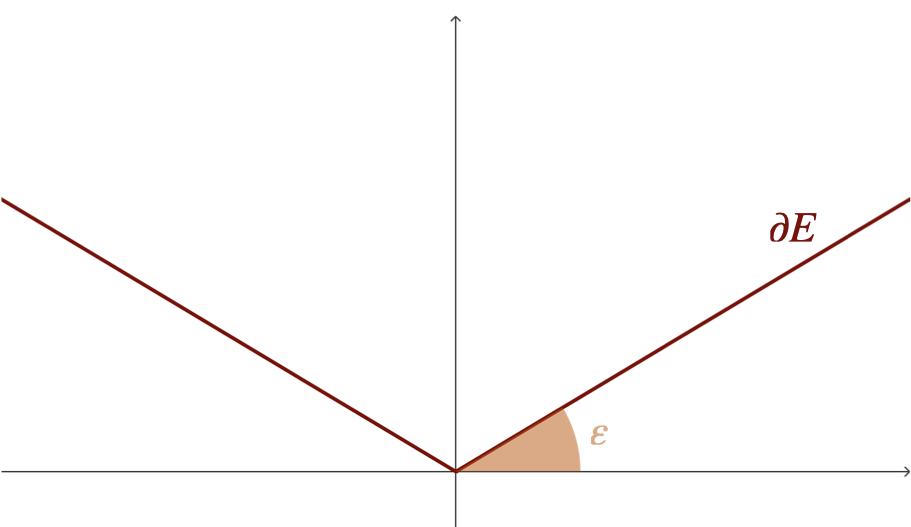}
				\caption{The set $E_\epsilon$.}
			\end{figure}
		\end{example}

		
		\section*{Acknowledgments} \label{sec::acknowledgments}	
		Part of this project was carried out while SD, EV, and RV was visiting Columbia University. For this reason, SD, EV, and RV would like to thank Columbia University for the hospitality.
		
		SD and EV are members of the Australian Mathematical Society (AustMS).
		SD and RV are supported by the Australian Research Council Future Fellowship FT230100333 ``New perspectives on nonlocal equations''.
		OS is supported by NSF Grant DMS-2349794.
		EV is supported by the Australian Laureate Fellowship FL190100081 ``Minimal surfaces, free boundaries and partial differential equations''.
		RV is supported by a Scholarship for International Research Fees at the University of Western Australia.
		
	\bibliographystyle{alpha}

\begin{thebibliography}{CDSS16}
		
		\bibitem[Alm76]{MR420406}
		F.~J. Almgren, Jr.
		\newblock Existence and regularity almost everywhere of solutions to elliptic
		variational problems with constraints.
		\newblock {\em Mem. Amer. Math. Soc.}, 4(165):viii+199, 1976.
		
		\bibitem[BFV14]{MR3331523}
		Bego\~na Barrios, Alessio Figalli, and Enrico Valdinoci.
		\newblock Bootstrap regularity for integro-differential operators and its
		application to nonlocal minimal surfaces.
		\newblock {\em Ann. Sc. Norm. Super. Pisa Cl. Sci. (5)}, 13(3):609--639, 2014.
		
		\bibitem[BLV19]{MR3926519}
		Claudia Bucur, Luca Lombardini, and Enrico Valdinoci.
		\newblock Complete stickiness of nonlocal minimal surfaces for small values of
		the fractional parameter.
		\newblock {\em Ann. Inst. H. Poincar\'e{} C Anal. Non Lin\'eaire},
		36(3):655--703, 2019.
		
		\bibitem[CDSS16]{MR3532394}
		L.~Caffarelli, D.~De~Silva, and O.~Savin.
		\newblock Obstacle-type problems for minimal surfaces.
		\newblock {\em Comm. Partial Differential Equations}, 41(8):1303--1323, 2016.
		
		\bibitem[CL21]{MR4279395}
		Matteo Cozzi and Luca Lombardini.
		\newblock On nonlocal minimal graphs.
		\newblock {\em Calc. Var. Partial Differential Equations}, 60(4):Paper No. 136,
		72, 2021.
		
		\bibitem[CMP15]{MR3401008}
		Antonin Chambolle, Massimiliano Morini, and Marcello Ponsiglione.
		\newblock Nonlocal curvature flows.
		\newblock {\em Arch. Ration. Mech. Anal.}, 218(3):1263--1329, 2015.
		
		\bibitem[CRS10]{MR2675483}
		L.~Caffarelli, J.-M. Roquejoffre, and O.~Savin.
		\newblock Nonlocal minimal surfaces.
		\newblock {\em Comm. Pure Appl. Math.}, 63(9):1111--1144, 2010.
		
		\bibitem[CS07]{MR2354493}
		Luis Caffarelli and Luis Silvestre.
		\newblock An extension problem related to the fractional {L}aplacian.
		\newblock {\em Comm. Partial Differential Equations}, 32(7-9):1245--1260, 2007.
		
		\bibitem[CS09]{MR2494809}
		Luis Caffarelli and Luis Silvestre.
		\newblock Regularity theory for fully nonlinear integro-differential equations.
		\newblock {\em Comm. Pure Appl. Math.}, 62(5):597--638, 2009.
		
		\bibitem[CV13]{MR3107529}
		Luis Caffarelli and Enrico Valdinoci.
		\newblock Regularity properties of nonlocal minimal surfaces via limiting
		arguments.
		\newblock {\em Adv. Math.}, 248:843--871, 2013.
		
		\bibitem[Dav09]{MR2518104}
		Guy David.
		\newblock H\"older regularity of two-dimensional almost-minimal sets in {$\Bbb
			R^n$}.
		\newblock {\em Ann. Fac. Sci. Toulouse Math. (6)}, 18(1):65--246, 2009.
		
		\bibitem[DMV17]{MR3707346}
		Serena Dipierro, Francesco Maggi, and Enrico Valdinoci.
		\newblock Asymptotic expansions of the contact angle in nonlocal capillarity
		problems.
		\newblock {\em J. Nonlinear Sci.}, 27(5):1531--1550, 2017.
		
		\bibitem[DVV26]{MR5015001}
		Serena Dipierro, Enrico Valdinoci, and Riccardo Villa.
		\newblock On non-local almost minimal sets and an application to the non-local
		{M}assari's {P}roblem.
		\newblock {\em J. Math. Pures Appl. (9)}, 208:Paper No. 103840, 78, 2026.
		
		\bibitem[Mir64]{MR0165073}
		Mario Miranda.
		\newblock Distribuzioni aventi derivate misure insiemi di perimetro localmente
		finito.
		\newblock {\em Ann. Scuola Norm. Sup. Pisa Cl. Sci. (3)}, 18:27--56, 1964.
		
		\bibitem[MV17]{MR3717439}
		Francesco Maggi and Enrico Valdinoci.
		\newblock Capillarity problems with nonlocal surface tension energies.
		\newblock {\em Comm. Partial Differential Equations}, 42(9):1403--1446, 2017.
		
		\bibitem[Rud87]{rudin}
		Walter Rudin.
		\newblock {\em Real and complex analysis}.
		\newblock McGraw-Hill Book Co., New York, third edition, 1987.
		
		\bibitem[SS23]{MR4594364}
		Thomas Schmidt and Jule~Helena Sch\"utt.
		\newblock The optimal {H}\"older exponent in {M}assari's regularity theorem.
		\newblock {\em Calc. Var. Partial Differential Equations}, 62(5):Paper No. 161,
		23, 2023.
		
	\end{thebibliography}

\end{document}